\documentclass[11pt,reqno]{amsart}

\usepackage[utf8]{inputenc} 



\usepackage[margin=1in]{geometry} 


\usepackage{graphicx} 
\usepackage{float} 

 \usepackage[parfill]{parskip} 
 
\usepackage{booktabs} 
\usepackage{array} 
\usepackage{paralist} 
\usepackage{verbatim} 
\usepackage{subfig} 
\usepackage{mathrsfs}
\usepackage{amssymb}
\usepackage{xcolor}
\usepackage{amsthm}
\usepackage{amsmath,amsfonts,amssymb,esint,hyperref}
\usepackage[noabbrev, capitalize]{cleveref}

\usepackage{autonum}

\usepackage{graphics,color}
\usepackage{enumerate, enumitem}
\usepackage{mathtools,centernot}
\usepackage{cases}
\usepackage{amsrefs}
\usepackage{bbm}
\usepackage{xfrac}



\pagestyle{plain} 



\usepackage{bookmark}

\newtheorem{theorem}{Theorem}[section]
\newtheorem{lemma}[theorem]{Lemma}

\newtheorem{corollary}[theorem]{Corollary}
\newtheorem{definition}[theorem]{Definition}
\newtheorem{proposition}[theorem]{Proposition}
\newtheorem{remark}[theorem]{Remark}

\numberwithin{equation}{section} 

\newcommand{\norm}[1]{\left\|#1\right\|}
\newcommand{\abs}[1]{\left|#1\right|}

\newcommand*{\R}{\ensuremath{\mathbb{R}}}
\renewcommand*{\H}{\ensuremath{\mathcal{H}}}

\newcommand*{\N}{\ensuremath{\mathbb{N}}}
\newcommand*{\Z}{\ensuremath{\mathbb{Z}}}

\newcommand{\eps}{\varepsilon}
\newcommand*{\tr}{\ensuremath{\mathrm{tr\,}}}

\newcommand{\e}{\varepsilon}

\newcommand{\quotes}[1]{``#1''}

\DeclareMathOperator{\Div}{Div}

\renewcommand{\MR}[1]{} 

\usepackage{color, graphicx}
\usepackage{mathrsfs, dsfont}

\usepackage[]{hyperref}
\hypersetup{
    colorlinks=true,       
    linkcolor=red,          
    citecolor=blue,        
    filecolor=red,      
    urlcolor=cyan           
}

\def\div{\mathop{\rm div}\nolimits}    
\def\disc{\mathop{\rm Disc}\nolimits} 

\def\spt{\mathop{\rm Spt}\nolimits} 
\def\tr{\mathop{\rm Tr}\nolimits} 
\def\Lip{\mathop{\rm Lip}\nolimits}  
 
\def\Div{\mathop{\rm Div}\nolimits}


\newcommand{\be}{\begin{equation}}
\newcommand{\ee}{\end{equation}}

\title{Normal traces and applications \\ to continuity equations on bounded domains}

\author[G. Crippa]{Gianluca Crippa}
\address[G. Crippa]{Departement Mathematik und Informatik, Universit\"at Basel, CH-4051 Basel, Switzerland}
\email{gianluca.crippa@unibas.ch}

\author[L. De Rosa]{Luigi De Rosa}
\address[L. De Rosa]{Departement Mathematik und Informatik, Universit\"at Basel, CH-4051 Basel, Switzerland}
\email{luigi.derosa@unibas.ch}

\author[M. Inversi]{Marco Inversi}
\address[M. Inversi]{Departement Mathematik und Informatik, Universit\"at Basel, CH-4051 Basel, Switzerland}
\email{marco.inversi@unibas.ch}

\author[M. Nesi]{Matteo Nesi}
\address[M. Nesi]{Departement Mathematik und Informatik, Universit\"at Basel, CH-4051 Basel, Switzerland}
\email{matteo.nesi@unibas.ch}

\date{\today}

\subjclass[2020]{35Q49 -- 35D30 -- 34A12 -- 28A25}
\keywords{Normal traces -- continuity equations -- uniqueness vs non-uniqueness -- $BV$ vector fields}
\thanks{\textit{Acknowledgements}.
 The authors acknowledge the support of the SNF grant FLUTURA: Fluids, Turbulence, Advection No. 212573}

\begin{document}

\begin{abstract}
In this work, we study several properties of the normal Lebesgue trace of vector fields introduced  by the second and third author in \cite{DRINV23} in the context of the energy conservation for the Euler equations in Onsager-critical classes. Among other things, we prove that the normal Lebesgue trace satisfies the Gauss-Green identity and, by providing explicit counterexamples, that it is a notion sitting strictly between the distributional one for measure-divergence vector fields and the strong one for $BV$ functions. These results are then applied to the study of the uniqueness of weak solutions for continuity equations on bounded domains, allowing to remove the assumption in \cite{CDS14} of global $BV$ regularity up to the boundary, at least around the portion of the boundary where the characteristics exit the domain or are tangent. The proof relies on an explicit renormalization formula completely characterized by the boundary datum and the positive part of the normal Lebesgue trace. In the case when the characteristics enter the domain, a counterexample shows that achieving the normal trace in the Lebesgue sense is not enough to prevent non-uniqueness, and thus a $BV$ assumption seems to be necessary to get uniqueness.
\end{abstract}

\maketitle

\section{Introduction}

Throughout this note we will work in any spatial dimension $d\geq 2$. Before stating our main results, we start by recalling some definitions and explaining the main context. 

\begin{definition}[$L^p$ Measure-divergence vector fields]
    Let $1\leq p\leq \infty$ and let $\Omega \subset \R^d$ be an open set. Given a vector field $u:\Omega \rightarrow \R^d$ we say that\footnote{In the literature the notation $\mathcal{DM}^p(\Omega)$   is probably more common, i.e. \quotes{Divergence-measure} vector fields.} $u\in \mathcal{MD}^p(\Omega)$ if $u\in L^p(\Omega)$ and $\div u \in \mathcal M(\Omega)$.
\end{definition}
Here $\mathcal{M}(\Omega)$ denotes the space of finite measures over the open set $\Omega$. Building on an intuition by Anzellotti \cites{Anz83,Anz83bis}, a weak (distributional) notion of normal trace can be defined by imposing the validity of the Gauss-Green identity.

\begin{definition}[Distributional normal trace]\label{D:ditrib norm trace}
Let $\Omega \subset \R^d$ be a bounded open set with Lipschitz boundary. Given a vector field $u\in \mathcal{MD}^1(\Omega)$, and denoting $\lambda=\div u$, we define its outward distributional normal trace on $\partial \Omega$ by
    \begin{equation}\label{distr_norm_trace}
    \langle \tr_n (u;\partial\Omega), \varphi\rangle :=\int_\Omega\varphi \,d\lambda + \int_\Omega u\cdot \nabla \varphi\,dy\qquad \forall \varphi \in C^\infty_c(\R^d).
      \end{equation}
\end{definition}

Note that by a standard density argument, considering  $\varphi\in \Lip_c(\R^d)$ in \eqref{distr_norm_trace} yields to an equivalent definition. Clearly $\tr_n (u;\partial\Omega)$ is in general a distribution of order $1$. However, see for instance \cite{ACM05}*{Proposition 3.2}, in the case $u\in \mathcal{MD}^\infty(\Omega)$ the distributional trace is in fact induced by a measurable function and $\tr_n (u;\partial\Omega)\in L^\infty (\partial \Omega;\mathcal H^{d-1})$. Note also that we are adopting the convention that $n:\partial \Omega\rightarrow \mathbb{S}^{d-1}$ is the \emph{outward} unit normal vector, which will be kept through the whole manuscript.

Measure-divergence vector fields, with particular emphasis on the case $p=\infty$, have received  great attention in recent years. They happen to be very useful in various contexts such as establishing fine properties of vector fields with bounded deformation \cite{ACM05}, existence and uniqueness for continuity-type equations with a physical boundary \cites{CDS14,CDS14bis},  conservation laws \cites{CTZ09,CT11,CTZ07,CT05,CF99}, dissipative anomalies and intermittency in turbulent flows \cite{DDI23} and several others.  For a detailed analysis of the  theoretical properties and refinements of $\mathcal{MD}^p$ we refer the interested reader to \cites{PT08,CCT19,Sil23,Sil05} and references therein.

\subsection{The normal Lebesgue trace}
The downside of the generality of \cref{D:ditrib norm trace} is in that it does not prevent bad behaviours of the vector field $u$ in the proximity of $\partial \Omega$. For instance in \cite{CDS14} it has been shown that having a distributional trace of the vector field is not enough to guarantee uniqueness for transport and/or continuity equations on a domain $\Omega$ with boundary, even when a boundary datum is properly assigned. On the positive side, in the same paper \cite{CDS14} it has been also shown that a $BV$ assumption on $u$ up to the boundary does imply uniqueness, the main reason being the fact that $BV$ functions achieve their traces in a sufficiently strong sense (see \cref{S:tools} for the definition and main properties of $BV$ functions). More recently, a new  notion of \emph{normal Lebesgue boundary trace} has been introduced in \cite{DRINV23} in the context of the energy conservation for the Euler equations in Onsager-critical classes. We recall it here.

\begin{definition}[Normal Lebesgue boundary trace]\label{D:Leb normal trace}
Let $\Omega\subset \R^d$ be a bounded open set with Lipschitz boundary and let $u\in L^1(\Omega)$ be a vector field. We say that $u$ admits an inward Lebesgue normal trace on $\partial \Omega$ if there exists a function  $f\in L^1(\partial \Omega;\mathcal H^{d-1})$ such that, for every sequence $r_k\rightarrow  0$, it holds
$$ \lim_{k \to \infty}\frac{1}{r_k^d} \int_{B_{r_k} (x)\cap\Omega} \left|(u\cdot \nabla d_{\partial \Omega})(y)- f(x)\right|\,dy=0 \qquad \text{for } \mathcal{H}^{d-1}\text{-a.e. } x\in \partial \Omega. $$
Whenever such a function exists, we will denote it by $f=: u_{-n}^{\partial \Omega}$. Consequently, the outward Lebesgue normal trace will be $u_{n}^{\partial \Omega}:= - u_{-n}^{\partial \Omega}$. 
\end{definition} 

It is an easy check that, whenever it exists, the normal Lebesgue trace is unique (see \cite{DRINV23}*{Section 5.1}). Note that here, to keep consistency with $\tr_n(u;\partial\Omega)$ being the outward normal distributional trace, we are adopting the opposite convention with respect to \cite{DRINV23}*{Definition 5.2} by switching the sign. The above definition has been used in the context of weak solutions to the incompressible Euler equations to prevent the energy dissipation from happening at the boundary \cite{DRINV23}*{Theorem 1.3}. It has also been proved (see the proof of \cite{DRINV23}*{Proposition 5.5}) that for $u\in BV$ the normal Lebesgue boundary trace exists, with explicit representation with respect to the \emph{full} trace of the vector field. Clearly, the definition of the normal Lebesgue trace can be restricted to any measurable subset $\Sigma\subset \partial \Omega$. This will be done in \cref{S:NLT local and signed}, together with the study of further properties which will be important for the application to the continuity equations on bounded domains. Due to the very weak regularity of the objects involved, our analysis requires several technical tools from geometric measure theory and in particular establishes properties of sets with Lipschitz boundary that might be interesting in themselves.

From now on we restrict ourselves to bounded vector fields, since otherwise the distributional normal trace might fail to be induced by a function. Our first main result shows that, for $u\in \mathcal{MD}^\infty$, if the normal Lebesgue trace exists it must coincide with the distributional one. In particular, it satisfies the Gauss-Green identity. We emphasize that, in general, the theorem below fails if $u$ is not bounded (see \cref{R:adolfo}).

\begin{theorem}\label{T:strong trace equals weak}
Let $\Omega \subset \R^d$ be a bounded open set with Lipschitz boundary and let $u\in \mathcal{MD}^\infty(\Omega)$. Assume that $u$ has a normal Lebesgue trace on $\partial \Omega$ in the sense of \cref{D:Leb normal trace}. Then, it holds 
$u_n^{\partial\Omega}\equiv \tr_n (u;\partial\Omega)$ as elements of $L^\infty(\partial \Omega;\mathcal H^{d-1})$.
\end{theorem}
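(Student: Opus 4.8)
The plan is to test the distributional definition \eqref{distr_norm_trace} against suitably chosen test functions, and to show that when $u$ is bounded the volume integral $\int_\Omega u\cdot\nabla\varphi\,dy$ localizes, in the limit of concentrating $\varphi$ near $\partial\Omega$, precisely to the normal Lebesgue trace $u_n^{\partial\Omega}$, while the mass term $\int_\Omega\varphi\,d\lambda$ disappears because $\lambda=\div u$ is a finite measure and the test functions we use will have uniformly bounded sup-norm but shrinking support. More precisely, fix $\psi\in C^\infty_c(\R^d)$ and build a family $\varphi_\delta(y):=\psi(y)\chi_\delta(d_{\partial\Omega}(y))$, where $\chi_\delta$ is a cutoff equal to $1$ at the boundary, supported in $\{d_{\partial\Omega}<\delta\}$, with $|\chi_\delta'|\lesssim 1/\delta$; since $\Omega$ has Lipschitz boundary, $d_{\partial\Omega}$ is Lipschitz, $|\nabla d_{\partial\Omega}|=1$ a.e., and the coarea formula applies. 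One computes
\[
\nabla\varphi_\delta=\chi_\delta(d_{\partial\Omega})\,\nabla\psi+\psi\,\chi_\delta'(d_{\partial\Omega})\,\nabla d_{\partial\Omega}.
\]
The first term is bounded by $\|\nabla\psi\|_\infty$ on a set of measure $O(\delta)$, so its contribution to $\int_\Omega u\cdot\nabla\varphi_\delta$ is $O(\delta)$; similarly $\int_\Omega\varphi_\delta\,d\lambda\to\int_{\partial\Omega}\psi\,d(\lambda\llcorner\partial\Omega)$, but since $\lambda$ is a measure on the \emph{open} set $\Omega$ it assigns no mass to $\partial\Omega$, so this term tends to $0$. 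Hence we are left with
\[
\langle\tr_n(u;\partial\Omega),\psi\rangle=\lim_{\delta\to0}\int_\Omega \psi(y)\,\chi_\delta'(d_{\partial\Omega}(y))\,(u\cdot\nabla d_{\partial\Omega})(y)\,dy.
\]

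The heart of the argument is to identify this limit with $\int_{\partial\Omega}\psi\, u_n^{\partial\Omega}\,d\mathcal H^{d-1}=-\int_{\partial\Omega}\psi\, u_{-n}^{\partial\Omega}\,d\mathcal H^{d-1}$. I would do this by a covering/differentiation argument: choosing $\chi_\delta$ so that $-\chi_\delta'$ is (essentially) $\delta^{-1}\mathbbm 1_{(0,\delta)}$, the inner integral becomes an average of $g:=u\cdot\nabla d_{\partial\Omega}$ over the thin collar $\{0<d_{\partial\Omega}<\delta\}$ weighted by $\psi$. At $\mathcal H^{d-1}$-a.e.\ boundary point $x$, Definition~\ref{D:Leb normal trace} says the $L^1$-average of $g$ over $B_r(x)\cap\Omega$ converges to $u_{-n}^{\partial\Omega}(x)$ as $r\to0$; combined with the fact that, for a Lipschitz domain, the collar $\{0<d_{\partial\Omega}<\delta\}$ can be covered efficiently by balls $B_{c\delta}(x)$ centered on $\partial\Omega$ with controlled overlap, and that $|\Omega\cap B_r(x)|\asymp r^d$ and $|\Omega\cap\{0<d_{\partial\Omega}<\delta\}|\asymp\delta\,\mathcal H^{d-1}(\partial\Omega)$ locally, this upgrades the pointwise Lebesgue-trace convergence to convergence of the collar averages. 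Using the uniform bound $\|g\|_\infty\le\|u\|_\infty$ (here is where boundedness of $u$ is essential, matching the remark that the theorem fails otherwise) together with the dominated convergence theorem in the boundary variable, one passes to the limit and obtains $\langle\tr_n(u;\partial\Omega),\psi\rangle=\int_{\partial\Omega}\psi\, u_n^{\partial\Omega}\,d\mathcal H^{d-1}$ for every $\psi\in C^\infty_c(\R^d)$. Since $\tr_n(u;\partial\Omega)\in L^\infty(\partial\Omega;\mathcal H^{d-1})$ by \cite{ACM05}*{Proposition 3.2}, density gives $u_n^{\partial\Omega}\equiv\tr_n(u;\partial\Omega)$ in $L^\infty(\partial\Omega;\mathcal H^{d-1})$, and the Gauss–Green identity for $u_n^{\partial\Omega}$ is then just \eqref{distr_norm_trace}.

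The main obstacle I anticipate is the rigorous passage from the pointwise convergence in Definition~\ref{D:Leb normal trace} (balls $B_r(x)\cap\Omega$) to convergence of the collar averages $\delta^{-1}\int_{\{0<d_{\partial\Omega}<\delta\}}\psi\,g$: this requires controlling the geometry of the tubular neighborhood of a merely Lipschitz boundary — the map $x\mapsto x - t\,n(x)$ is not well-defined when $n$ is only $L^\infty$, so one cannot foliate the collar by smooth level sets as for $C^2$ domains. I expect one needs a Besicovitch/Vitali covering of $\partial\Omega$ adapted to the Lebesgue points of $u_{-n}^{\partial\Omega}$, together with the lower/upper density bounds $c\,r^d\le|B_r(x)\cap\Omega|\le C\,r^d$ for $x\in\partial\Omega$ (valid for Lipschitz domains) and a comparison of $\mathcal H^{d-1}\llcorner\partial\Omega$ with the $(d-1)$-dimensional "slice measure" of the collar via the coarea formula applied to $d_{\partial\Omega}$. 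These are exactly the "properties of sets with Lipschitz boundary" that the introduction flags as being developed in Section~\ref{S:NLT local and signed}, so I would expect the proof to invoke those lemmas; with them in hand, the remaining steps are routine truncation, dominated convergence, and density arguments.
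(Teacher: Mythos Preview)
Your approach is essentially the same as the paper's. The paper uses the complementary cutoff $\chi_r(y)=\min\{d_{\partial\Omega}(y)/r,1\}$ (vanishing on $\partial\Omega$ rather than equal to $1$ there), tests the Gauss--Green identity with $\varphi\chi_r$ so that the boundary term drops out, and lets $r\to 0$; this leaves precisely the same collar integral $\frac{1}{r}\int_{(\partial\Omega)_r^{\rm in}}\varphi\,(u\cdot\nabla d_{\partial\Omega})\,dy$ whose limit must be identified with $\int_{\partial\Omega}\varphi\,u_{-n}^{\partial\Omega}\,d\mathcal H^{d-1}$. For that key step (your ``main obstacle''), the paper's \cref{P:traces and tub neigh general} proceeds by Lusin and Egorov to pass to a large closed subset $A\subset\partial\Omega$ on which $u_{-n}^{\partial\Omega}$ is continuous and the Lebesgue-point convergence is \emph{uniform}, a Tietze extension to replace $u_{-n}^{\partial\Omega}$ by a globally continuous function, the weak convergence $\frac{1}{r}\mathcal H^d\llcorner(\partial\Omega)_r^{\rm in}\rightharpoonup\mathcal H^{d-1}\llcorner\partial\Omega$ of one-sided Minkowski contents (\cref{C:conv one side mink}) to handle the main term, and finally a Vitali covering together with the cardinality bound $\#J\lesssim r^{1-d}$ to control the remainder on $(A)_r$. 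Your outline (Besicovitch/Vitali covering adapted to Lebesgue points, density bounds, $L^\infty$ domination) points at the same ingredients, though the Lusin--Egorov reduction is what makes the ``dominated convergence in the boundary variable'' step actually go through.
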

In particular, for bounded measure-divergence vector fields, either the normal Lebesgue trace does not exist or it exists and it coincides with the distributional one.
This observation will be used in \cref{l: example} to construct $u\in \mathcal{MD}^\infty$ which does not admit a normal Lebesgue trace. Moreover it is rather easy to find vector fields which do admit a normal Lebesgue trace but fail to be of bounded variation (see \cref{R: BV vs normal trace}). It follows that \cref{D:Leb normal trace} is a notion  lying strictly between the distributional one of \cref{D:ditrib norm trace} and the strong one for $BV$ vector fields (see \cref{T:trace_in_BV}). In \cref{S:NLT properties} we also prove some additional properties of the normal Lebesgue trace which might be of independent interest. Let us point out that here all the results assume the vector field to be bounded.

\begin{remark}\label{R:adolfo}
In the first version of this manuscript, we raised the question whether the existence of the normal Lebesgue trace implies $u_n^{\partial\Omega}= \tr_n (u;\partial\Omega)$   in the more general case $u \in \mathcal{MD}^1(\Omega)$.  Soon after posting the first version, A. Arroyo-Rabasa \cite{AAR_comm} provided us the following example that shows that the answer to the question is negative in general. In the (open) upper half two-dimensional ball $\Omega:=B_1(0)\cap \R^2_+$ consider $u(x)=x \abs{x}^{-2}$. Then $\div u = 0$ in $\Omega$, $u\in \mathcal{MD}^1(\Omega)$, $u_n^{\partial\Omega}\big|_{x_2=0}\equiv 0$ but $\tr_n (u;\partial\Omega)$ has a Dirac delta in the point $(0,0)\in \partial \Omega$. In particular,  $u^{\partial\Omega}_n$ does not satisfy the Gauss-Green identity. This example appeared already in \cite{W57}*{Section III.14} and has been further studied in \cites{CCT19,CF03,CIT24}.
\end{remark}

\subsection{Applications to the continuity equation} The groundbreaking theory of \emph{renormalized solutions} by DiPerna--Lions \cite{DipLi89} and Ambrosio \cite{Ambr04} establishes the well-posedness for weak solutions of the continuity equation with rough vector fields on the whole space $\R^d$, more precisely, in the case of Sobolev and $BV$ vector fields respectively. See for instance  \cites{CripDel08,Cr09} for a review.

Let us now focus on the bounded domain setting. Since we will be working with merely bounded solutions, the rigorous definitions become quite delicate. For this reason we postpone to \cref{S:Cont Eq} the main technical formulations which guarantee that all the objects involved are well defined.
We consider a solution $\rho:\Omega\times (0,T)\rightarrow \R$ to
\begin{equation}\label{CE}
    \left\{
    \begin{array}{rcll}
        \partial_t \rho + \div (\rho u  ) &=& c \rho +f & \text{ in }\Omega\times (0,T) \\
        \rho &=&  g & \text{ on }  \Gamma^- \\
        \rho(\cdot,0) &=&  \rho_0 & \text{ in }  \Omega,
    \end{array}
    \right.
\end{equation}
where $\Omega\subset \R^d$ is an open bounded set with Lipschitz boundary, $u:\Omega\times (0,T)\rightarrow \R^d$ is a given vector field, $\Gamma^- \subset \partial \Omega \times (0,T)$ is the (possibly time-dependent) portion of the boundary in which the characteristics are entering,  while $\rho_0:\Omega\rightarrow \R$, $g:\Gamma^-\rightarrow \R$ and $c,f: \Omega\times (0,T)\rightarrow \R$ are given data. Note that if $\rho$ and $u$ are  not sufficiently regular their value on negligible sets is not well defined. However, as noted in \cite{CDS14bis}, a distributional formulation of the problem \eqref{CE} can still be given by relying on the theory of measure-divergence vector fields described above, see \cref{D:Cont Eq weak sol}.  The existence of such weak solutions has been proved in \cite{CDS14bis} by parabolic regularization under quite general assumptions. A much more delicate issue is the uniqueness of weak solutions, which has been established in \cite{CDS14} under the assumption that $u\in L^1_{\rm loc}([0,T);BV(\Omega))$, that is when the vector field enjoys $BV$ regularity up to the boundary. The uniqueness result heavily relies on a suitable chain-rule formula for the normal trace of $\rho^2 u$ at the boundary, previously established in \cite{ACM05}*{Theorem 4.2}, which holds when $u\in BV(\Omega)$. Our main goal is to show that no $BV$ assumption on $u$ is necessary around the portion of the boundary where the characteristics exit, as soon as a suitable behaviour in terms of the normal Lebesgue trace is assumed. 

In the next theorem, for a set $A\subset \partial \Omega$, we will denote its $r$-tubular neighbourhood \quotes{interior to $\Omega$} by $(A)_r^{\rm in}:=(A)_r\cap \Omega$, where $(A)_r$ is the standard tubular neighbourhood (in $\R^d$) of width $r>0$. We refer to \cref{S:Notation} for a more detailed guideline on the notation used in this whole note.

\begin{theorem}
    \label{T:CE general}
    Let $\Omega\subset \R^d$ be a bounded open set with Lipschitz boundary and $u \in L^\infty(\Omega\times (0,T))\cap L^1_{\rm loc}([0,T);BV_{\rm loc}(\Omega))$ be a vector field such that $\div u \in L^1((0,T);L^\infty(\Omega))$. Let 
 $\Gamma^-,\Gamma^+\subset \partial\Omega\times (0,T)$ be as in \cref{D:Gamma plus and minus} and assume that 
 \begin{itemize}
     \item[(i)] there exists an open set $O \subset \R^{d}\times (0,T)$ such that $\Gamma^- \subset O$, $u_t \in BV_{\rm loc}(O_t)$ for a.e. $t\in (0,T)$ and $\nabla u_t \otimes dt \in \mathcal{M}_{\rm loc} (O)$,
     \item[(ii)] denoting by $\Gamma^+_t\subset \partial \Omega$ the $t$-time slice of the space-time set $\Gamma^+$, for a.e. $t\in (0,T)$ it holds  
     \begin{equation}
         \label{hp: gamma out exit}
         \lim_{r\rightarrow 0} \frac{1}{r} \int_{(\Gamma^+_t)_{r}^{\rm in}} (u_t\cdot \nabla d_{\partial \Omega})_+ \,dx=0.
     \end{equation}
 \end{itemize} 
Moreover, let $f\in L^1(\Omega\times (0,T))$, $c\in L^1((0,T);L^\infty(\Omega))$, $\rho_0 \in L^\infty (\Omega)$ and $g \in L^\infty(\Gamma^-)$ be given. Then, in the class $\rho \in L^\infty(\Omega\times (0,T))$, the problem \eqref{CE} admits at most one distributional solution in the sense of \cref{D:Cont Eq weak sol}.
\end{theorem}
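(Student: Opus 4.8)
The strategy is the classical one of DiPerna--Lions/Ambrosio adapted to the bounded-domain setting: reduce uniqueness for the inhomogeneous problem to showing that any $\rho$ solving the homogeneous problem (i.e.\ with $f=0$, $g=0$, $\rho_0=0$) vanishes identically, and prove this via a renormalization argument, testing the equation with the quadratic nonlinearity $\rho^2$. By linearity it suffices to treat the homogeneous case, and the whole difficulty is concentrated in obtaining a chain rule / renormalized formulation that is valid \emph{up to the boundary}, i.e.\ in controlling the boundary term that appears when one integrates $\partial_t(\rho^2) + \div(\rho^2 u) - \rho^2\,\div u$ against a test function that does not vanish on $\partial\Omega$.

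First I would set up the renormalization in the interior: since $u_t\in BV_{\rm loc}(\Omega)$ with $\div u \in L^1_tL^\infty_x$ and $\rho\in L^\infty$, Ambrosio's commutator estimate gives that $\rho$ (more precisely $\beta(\rho)$ for $\beta\in C^1\cap W^{1,\infty}$, here $\beta(s)=s^2$) is a distributional solution of the renormalized equation $\partial_t(\rho^2) + \div(\rho^2 u) = (c+\div u)\rho^2 \cdot(\text{with the appropriate }c\text{-term})$ in $\Omega\times(0,T)$, so that $\rho^2 u \in \mathcal{MD}^\infty$ locally and one can speak of its normal trace. The point is then to pass to the Gauss-Green identity on $\Omega$. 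Near $\Gamma^-$ this is exactly the situation of \cite{CDS14}: the hypothesis (i) furnishes genuine $BV$ regularity of $u$ in an open neighbourhood $O$ of $\Gamma^-$, so the chain-rule formula of \cite{ACM05}*{Theorem 4.2} applies there and lets one identify $\tr_n(\rho^2 u;\Gamma^-)$ in terms of the interior trace of $\rho$ and $u_{-n}^{\partial\Omega}$; combined with the boundary datum $g=0$ this makes the contribution of $\Gamma^-$ nonpositive (it is the standard ``mass cannot enter'' sign). Away from $\Gamma^-\cup\Gamma^+$ the normal trace of $u$ vanishes (characteristics tangent to, or not touching, that part of the boundary), so the corresponding boundary term of $\rho^2 u$ is controlled as well. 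The new ingredient is the portion $\Gamma^+$, where no $BV$ assumption is available: here I would use hypothesis (ii), the vanishing of $r^{-1}\int_{(\Gamma^+_t)^{\rm in}_r}(u_t\cdot\nabla d_{\partial\Omega})_+\,dx$, to show that the outflow boundary term $\int_{\Gamma^+_t}\rho^2\, (u_n^{\partial\Omega})_+\,d\mathcal H^{d-1}$ — which by \cref{T:strong trace equals weak} is the Lebesgue trace whenever it exists, and which in any case is the object one estimates through a tubular-neighbourhood slicing — contributes with the favourable sign (outflow removes mass), and in fact the quantitative hypothesis \eqref{hp: gamma out exit} guarantees that the relevant approximating integrals over shrinking collars converge to a boundary integral with a definite sign rather than producing an uncontrolled defect. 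This is where the ``explicit renormalization formula completely characterized by the boundary datum and the positive part of the normal Lebesgue trace'' announced in the abstract enters.

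Assembling these pieces, testing the renormalized equation for $\rho^2$ with $\varphi(x)\psi(t)$, $\psi$ an approximation of $\mathbbm 1_{(0,\tau)}$ and $\varphi\to 1$, and sending the collar width $r\to 0$ using (ii), one obtains a Gr\"onwall inequality of the form
\[
\int_\Omega \rho^2(x,\tau)\,dx \;\le\; \int_0^\tau \big(2\|c(t)\|_{L^\infty}+\|\div u(t)\|_{L^\infty}\big)\int_\Omega \rho^2(x,t)\,dx\,dt,
\]
with no boundary remainder on the right-hand side because the $\Gamma^-$ term is killed by $g=0$, the $\Gamma^+$ term has the right sign (and is made rigorous by \eqref{hp: gamma out exit}), and the initial term vanishes because $\rho_0=0$. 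Since $t\mapsto\|c(t)\|_{L^\infty}+\|\div u(t)\|_{L^\infty}$ is in $L^1(0,T)$, Gr\"onwall forces $\rho\equiv 0$, which by linearity yields uniqueness for \eqref{CE}.

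\textbf{Main obstacle.} The hard part is rigorously passing to the limit in the boundary layer near $\Gamma^+$: one must justify, using only the weak hypothesis \eqref{hp: gamma out exit} and the boundedness of $\rho$, that the collar integrals $r^{-1}\int_{(\Gamma^+_t)^{\rm in}_r}\rho^2\,(u_t\cdot\nabla d_{\partial\Omega})_+\,dx$ do not contribute a negative defect (which would destroy Gr\"onwall); this requires the geometric-measure-theoretic control of tubular neighbourhoods of sets with Lipschitz boundary developed in \cref{S:NLT local and signed}, together with the compatibility — provided by \cref{T:strong trace equals weak} — between the Lebesgue trace appearing in (ii) and the distributional normal trace of $\rho^2 u$ needed for the Gauss-Green identity. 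A secondary technical point is gluing the $BV$-near-$\Gamma^-$ argument of \cite{CDS14} with the non-$BV$-near-$\Gamma^+$ argument through a partition of unity subordinate to $O$ and a neighbourhood of $\Gamma^+$, making sure the interior renormalization and the two boundary analyses are consistent on the overlaps.
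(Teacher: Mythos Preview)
Your proposal is correct and follows essentially the same route as the paper: interior renormalization via Ambrosio, the chain rule of \cite{ACM05} on $\Gamma^-$ (where $BV$ regularity up to the boundary is available by (i)), control of the collar integral on $\Gamma^+$ via hypothesis \eqref{hp: gamma out exit} leaving a boundary remainder of favourable sign, a Sobolev partition $\lambda^\pm$ with boundary traces $\mathbbm 1_{\Gamma^\pm}$ to glue the two regimes, and Gr\"onwall on $\int_\Omega\rho^2$. One cosmetic remark: by \cref{D:Gamma plus and minus} one has $\Gamma^+\cup\Gamma^-=\Lambda$, so there is no third region ``away from $\Gamma^-\cup\Gamma^+$'' to handle (the tangential set $\{\tr_n(u)=0\}$ is already absorbed into $\Gamma^+$), but this does not affect your argument.
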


\begin{remark}[Existence]
\cref{T:CE general} is concerned only with uniqueness. The existence part,  in the case $\div u, f, c\in L^\infty(\Omega\times (0,T))$ is more classical and can be found in \cite{CDS14bis}. Notice that the generalization to the case $f\in L^1(\Omega\times (0,T))$, $c\in L^1((0,T);L^\infty(\Omega))$ directly follows by a standard truncation argument together with the a priori bound on the solution.
\end{remark}

A more general version of the above theorem will be given in \cref{S:Cont Eq} (see \cref{T:CE particular} and \cref{C: TE tangent}) where we prove that an explicit weak formulation for $\beta(\rho)$ holds up to the boundary, for any $\beta\in C^1(\R)$, that is the vector field $u$ satisfies a renormalization property on $\Omega\times [0,T)$. The renormalization property can be seen in a certain sense as an analogue in the linear case of the conservation of the energy studied in \cite{DRINV23} in the context of the Euler equations. In particular, it is natural to investigate the role of the normal Lebesgue trace for the renormalization property.  The assumption \eqref{hp: gamma out exit} can be thought of as a way to force characteristics to be \quotes{uniformly} exiting, which we will show to be enough to prevent non-uniqueness phenomena. Equivalently, \eqref{hp: gamma out exit} dampens any \quotes{recoil} of the vector field which could cause mass to enter the domain $\Omega$ around the portion of $\partial \Omega$ where on average (i.e. in the weak sense) it points outward, a phenomenon which relates to ill-posedness. More effective conditions in terms of the normal Lebesgue trace from \cref{D:Leb normal trace} which imply \eqref{hp: gamma out exit} will be given in \cref{S:NLT local and signed} (see for instance \cref{C:NLT pos and neg}). As a consequence of our general results on the relation between the normal Lebesgue trace and the distributional one, in \cref{P: BV and exit} we will show that \eqref{hp: gamma out exit} holds true as soon as $u$ is $BV$ up to the boundary, while in general it is a strictly weaker assumption. In some sense, \cref{T:CE general} shows that the subset of the boundary in which characteristics are entering, i.e. $\Gamma^-$, is more problematic than $\Gamma^+$ since it requires the vector field to be $BV$ in its neighbourhood. Indeed, we notice that in the counter-example  built in \cite{CDS14} the vector field achieves the normal boundary trace in the strong Lebesgue sense.

\begin{proposition}\label{P:counter-ex}
 Let $\Omega:=   \R^2\times (0, +\infty) $. There exists an autonomous vector field $u:\Omega\rightarrow \R^3$ such that $\div u=0$, $u\in L^\infty(\Omega)\cap BV_{\rm loc} (\Omega)$, $\tr_n (u;\partial\Omega)\equiv u^{\partial\Omega}_n\equiv -1$ and the initial-boundary value problem 
 \begin{equation}\label{CE counter ex}
    \left\{
    \begin{array}{rcll}
        \partial_t \rho + u\cdot \nabla \rho &=& 0 & \text{ in }\Omega\times (0,1) \\
        \rho &=&  0 & \text{ on }  \partial \Omega\times (0,1) \\
        \rho(\cdot,0) &=&  0 & \text{ in }  \Omega
    \end{array}
    \right.
\end{equation}
admits infinitely many weak solutions in the sense of \cref{D:Cont Eq weak sol}.
\end{proposition}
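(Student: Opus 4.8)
The plan is to adapt the classical non-uniqueness mechanism of Depauw for transport equations, in the form already exploited in \cite{CDS14}, to the half-space $\Omega=\R^2\times(0,+\infty)$, using the coordinate $x_3$ to turn a non-autonomous planar vector field into an autonomous one. Concretely, I would fix a bounded divergence-free field $b=b(y,s)\colon\R^2\times(0,+\infty)\to\R^2$ ($y\in\R^2$ periodic, $s>0$), given by a dyadic cascade of alternating shear flows whose switching ``times'' accumulate at $s=0$ and which for $s\ge 1$ is extended trivially ($b\equiv 0$), together with a bounded weak solution $w=w(y,s)\not\equiv 0$ of the planar transport equation $\partial_s w+b\cdot\nabla_y w=0$ that attains the zero initial datum $w|_{s=0}=0$ in the weak-$*$ sense; after a harmless rescaling in $s$ one may assume $w(\cdot,s_0)\ne 0$ for some $s_0\in(0,1)$. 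This is exactly the content of Depauw's construction. I then set
\[
u(x_1,x_2,x_3):=\bigl(b(x_1,x_2,x_3),\,1\bigr),\qquad (x_1,x_2,x_3)\in\Omega.
\]

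Next I would check the stated properties of $u$. It is autonomous and bounded; $\div u=\div_y b+\partial_{x_3}1=0$; and $u\in BV_{\rm loc}(\Omega)$, because every compact subset of $\Omega$ lies in $\{x_3\ge\delta\}$ for some $\delta>0$, where only finitely many steps of the cascade appear and $b$ is genuinely $BV$ — whereas $b$, and hence $u$, fails to be $BV$ up to $\{x_3=0\}$, which is precisely what makes this example consistent with the necessity of a $BV$ hypothesis near $\Gamma^-$ in \cref{T:CE general}. For the normal trace I use that $\Omega$ is a half-space, so $d_{\partial\Omega}(x)=x_3$ and $\nabla d_{\partial\Omega}\equiv e_3$; hence $u\cdot\nabla d_{\partial\Omega}=u_3\equiv 1$ is constant, so the Lebesgue normal trace of \cref{D:Leb normal trace} trivially exists with $u^{\partial\Omega}_{-n}\equiv 1$, that is $u^{\partial\Omega}_n\equiv -1$. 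For the distributional trace I would exhaust $\Omega$ by the half-spaces $\{x_3>\eps\}$, use the Gauss--Green formula there (legitimate since $u$ is bounded, $BV_{\rm loc}$ and divergence-free in $\Omega$) and let $\eps\to0$: the in-plane components of $u$ are bounded and tangent to the horizontal slices, so they carry no flux, and one is left with $\langle\tr_n(u;\partial\Omega),\varphi\rangle=-\int_{\partial\Omega}\varphi\,d\mathcal H^{2}$, i.e.\ $\tr_n(u;\partial\Omega)\equiv-1\equiv u^{\partial\Omega}_n$ (alternatively, a localized version of \cref{T:strong trace equals weak} applies). In particular $\Gamma^-=\partial\Omega\times(0,1)$ and $\Gamma^+=\emptyset$.

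The non-uniqueness is then witnessed by
\[
\rho(x_1,x_2,x_3,t):=\mathbbm{1}_{\{x_3<t\}}\,w(x_1,x_2,x_3)
\]
and all of its scalar multiples. To see that $\rho$ is a weak solution of \eqref{CE counter ex} in the sense of \cref{D:Cont Eq weak sol}, I would glue across the surface $\{x_3=t\}$: on $\{x_3>t\}$ the function $\rho\equiv 0$ solves the equation trivially; on $\{x_3<t\}$ the $t$-independent function $w(x_1,x_2,x_3)$ satisfies $\partial_t\rho+u\cdot\nabla_x\rho=\partial_{x_3}w+b\cdot\nabla_y w=0$ by the choice of $w$; and across $\{x_3=t\}$ the advecting space--time field $(u,1)=(b_1,b_2,1,1)$ is tangent, being orthogonal to $\nabla_{(x,t)}(x_3-t)=(0,0,1,-1)$, so the jump of $\rho$ carries no flux and the Rankine--Hugoniot condition is satisfied; hence $\rho$ is a weak solution on all of $\Omega\times(0,1)$. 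The initial datum $\rho_0=0$ is attained since $\{0<x_3<t\}\downarrow\emptyset$ as $t\to0^+$, so $\rho(\cdot,t)\to 0$ in $L^1_{\rm loc}(\Omega)$; the boundary datum $g=0$ on $\Gamma^-$ is attained because near $\{x_3=0\}$ and for $t$ in a compact subset of $(0,1)$ one has $\rho=w(y,x_3)$, which is bounded and divergence-free there, so its distributional normal trace through $\{x_3=0\}$ equals the weak-$*$ limit $\lim_{x_3\to0}w(\cdot,x_3)=0$, whence $\tr_n(\rho u;\Gamma^-)=0=g\,\tr_n(u;\Gamma^-)$. Finally $\rho\not\equiv 0$, since $w(\cdot,s_0)\ne 0$ for some $s_0\in(0,1)$ (take $t>s_0$), and because the right-hand side, the initial datum and the boundary datum in \eqref{CE counter ex} are all homogeneous, every $\lambda\rho$ with $\lambda\in\R$ is again a weak solution; thus \eqref{CE counter ex} admits infinitely many weak solutions.

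The main obstacle I anticipate is not the PDE bookkeeping above — the divergence-free computation, the half-space distance function, the Rankine--Hugoniot check across $\{x_3=t\}$ and the attainment of the data are all routine — but the careful design of the Depauw cascade so that every requirement holds at once: the field must be bounded, divergence-free, and have its roughness concentrated exactly at $\{x_3=0\}$, so that $u\in BV_{\rm loc}(\Omega)$ while $u\notin BV(\Omega)$, and it must come together with an honest $L^\infty$ weak solution $w$ having a genuinely vanishing (weak-$*$) initial trace; matching that construction precisely with the normal-trace formulation of the boundary condition on $\Gamma^-$ in \cref{D:Cont Eq weak sol} is the delicate point.
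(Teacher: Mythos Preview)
Your proposal is correct and follows essentially the same route as the paper: the same lifting $u=(b,1)$ of Depauw's planar field, the same identification $u\cdot\nabla d_{\partial\Omega}\equiv 1$ giving $u_n^{\partial\Omega}\equiv -1$, and the same non-uniqueness mechanism. The paper's proof is shorter only because it defers the properties of $u$ and the existence of infinitely many solutions directly to \cite{CDS14}*{Proposition 1.2}, whereas you unpack that construction explicitly (the solution $\rho=\mathbbm{1}_{\{x_3<t\}}w$ and the Rankine--Hugoniot check across $\{x_3=t\}$); you are also right to flag that \cref{T:strong trace equals weak} is stated for bounded domains and to supply the exhaustion argument instead---the paper invokes the theorem without comment. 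One small slip: where you write that ``$\rho=w(y,x_3)$ \ldots\ is bounded and divergence-free there'', you mean the flux $\rho u$ (or the space--time field $\rho U$), not the scalar $w$ itself.
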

The reader may notice that the domain $\Omega$ in the above proposition is unbounded. This choice has been made for convenience in order to directly consider the Depauw-type construction from \cite{CDS14}*{Proposition 1.2}, so that the vector field $u$ can enter on the full boundary $\partial \Omega =\R^2\times \{0\}$ while still being incompressible. This is clearly enough to show that a non-trivial $\Gamma^-$ makes both notions of traces from \cref{D:ditrib norm trace} and \cref{D:Leb normal trace} not sufficient to obtain well-posedness. Furthermore, let us mention that  also \eqref{hp: gamma out exit} cannot be avoided. Indeed in \cite{CDS14}*{Theorem 1.3} the authors construct an autonomous vector field with $\tr_n(u;\partial\Omega)\equiv 1$ which fails to guarantee uniqueness. As discussed in \cite{CDS14}, such construction can be also modified to have $\tr_n( u;\partial \Omega)\equiv 0$. Thus, in the context considered here, the assumptions made in \cref{T:CE general} are essentially optimal and they single out the behaviour of rough vector fields which is truly relevant.

\subsection{Plan of the paper} \cref{S:tools} contains all the technical tools: we start by introducing the main notation, then we recall some basic facts about weak convergence of measures and $BV$ functions and we conclude by proving some technical results about the convergence of  Minkowski-type contents.  In \cref{S:NLT properties} we focus on various properties of the normal Lebesgue trace: we prove the Gauss-Green identity in \cref{T:strong trace equals weak}, the convergence of the positive and negative parts of the Lebesgue trace, the connection with $BV$ vector fields and conclude with \cref{l: example} by constructing  a vector field which admits a distributional normal trace but fails to have the Lebesgue one. The last \cref{S:Cont Eq} contains all the applications to the continuity equation: after recalling the main setting from \cite{CDS14}, which allows to define weak solutions, in \cref{T:CE particular} we prove the main well-posedness result, which is a more general version of \cref{T:CE general}, and then conclude with the proof of \cref{P:counter-ex}.

\section{Technical tools}\label{S:tools}
In this section we collect some, mostly measure theoretic, tools which will be needed. We start by introducing some notation.

\subsection{Notation}\label{S:Notation}

\begin{itemize}
    \item we set $\mathbb{G}_m^d = \left\{ \text{linear $m$-dimensional subspaces in } \R^d \right\}$ and we denote by $\Pi\in \mathbb{G}_m^d$ its elements;
    \item $\omega_m$ is the $m$-volume of the $m$-dimensional unit ball;
    \item given an open set $\Omega \subset \R^d$ and $T>0$, we set $\Lambda:=\partial \Omega\times (0,T)$ and  $\mathcal{L}_{\partial \Omega}^T := \left(\mathcal{H}^{d-1} \otimes dt \right)\llcorner \Lambda$; 
    \item given a bounded vector field $u:\Omega\times (0,T)\rightarrow \R^d$, we denote by $\Gamma^+,\Gamma^-\subset \Lambda$ the parts of the boundary in which characteristics are exiting and entering respectively (see \cref{D:Gamma plus and minus}); 
    \item for any space-time measurable set $A\subset \R^d\times (0,T)$ we denote by $A_t:= \{ x\in \R^d\,:\,(x,t)\in A\}$ its slice at a given time $t$;
    \item given $f:\Omega\times (0,T)\rightarrow \R$ we denote by $f_t:\Omega\rightarrow \R$ the map $f_t(\cdot):=f(\cdot ,t)$;
    \item given a set $M \subset \R^d$, for any point $x\in \R^d$, we define $d_M(x) = \inf \{ \abs{x-y}\, \colon \,y \in M \}$;
    \item given a closed set $M\subset \R^d$, we denote by $\pi_M:\R^d\rightarrow M$ the projection map onto $M$;
    \item given $r>0$ and a set $M \subset \R^d$, we denote by $(M)_r := \{ x \in \R^d \,\colon\, d_M(x) < r\}$  the open tubular neighbourhood of radius $r$; 
    \item given an open set $\Omega \subset \R^d$ and $r>0$, we denote by $(\partial \Omega)_r^{\rm in} := (\partial \Omega)_r \cap \Omega$ and $(\partial \Omega)_r^{\rm out} := (\partial \Omega)_r \cap (\R^d \setminus \overline{\Omega})$ the \emph{interior} and \emph{exterior} tubular neighbourhoods of $\partial \Omega$ respectively;
    \item slightly abusing notation, when $A\subset \partial\Omega$ we still denote by $(A)_r^{\rm in }:= (A)_r\cap \Omega$ and $(A)_r^{\rm out }:= (A)_r\cap (\R^d\setminus \overline\Omega)$ the parts of the $r$-tubular neighbourhoods of $A$ which belong to $\Omega$ and $\Omega^c$ respectively;
    \item given an open set $\Omega$ we denote by $\mathcal M(\Omega)$ the space of finite signed measures on $\Omega$, while $\mathcal M_{\rm loc} (\Omega)$ denotes the space of Radon measures on $\Omega$;
    \item given $\mu \in \mathcal{M}(\Omega)$, we denote by $\spt \mu$ the support of the measure $\mu$, that is the smallest closed set where $\mu$ is concentrated;
    \item $\tr_n(u;\partial \Omega)$ is the distributional normal trace from \cref{D:ditrib norm trace};
    \item  $u^{\partial\Omega}_n$ is the normal Lebesgue trace from \cref{D:Leb normal trace};
    \item  $u^{\Sigma}_n$ is the normal Lebesgue trace from \cref{D:NLT local} on a subset $\Sigma\subset\partial \Omega$;
    \item $u^\Omega$ is the full trace  of $u$ on $\partial \Omega$, in the $BV$ sense, as defined in \cref{T:trace_in_BV};
    \item whenever we consider the space-time set $\Omega\times (0,T)$, we denote by $n_\Omega$ the outer normal to $\partial\Omega$;
    \item for any function $f$ we denote by $f_+$ and $f_-$ its positive and negative part respectively, i.e. $f=f_+-f_-$;
    \item we denote by $\div$ the divergence with respect to the spatial variable;
    \item we denote by $\Div$ the space-time divergence, that is $\Div (u,f)=\div u+\partial_t f$, where $u:\Omega\times (0,T)\rightarrow\R^d$ and $f:\Omega\times (0,T)\rightarrow\R$. 
\end{itemize}

\subsection{Weak convergence of measures} Here we recall some basic facts on weak convergence of measures.

\begin{definition} \label{d: weak convergence of measures}
Let $\mu_k, \mu \in \mathcal M_{\rm loc}(\R^d)$. We say that $\{\mu_k\}_k$ converges weakly to $\mu$, denoted by $\mu_k \rightharpoonup \mu$, if for any test function $\varphi \in C_c(\R^d)$ it holds  
\begin{equation}
    \lim_{k \to \infty} \int_{\R^d} \varphi \, d \mu_k = \int_{\R^d} \varphi \, d \mu. \label{eq: weak convergence test}
\end{equation}
\end{definition}

Weak convergence of measures can be characterized as follows. 

\begin{proposition} \label{p:char of weak convergence}
Let $\mu_k, \mu\in \mathcal M(\R^d)$ be such that $ \mu_k(\R^d) \rightarrow \mu(\R^d)$.
The following facts are equivalent: 
\begin{itemize}
    \item $\mu_k \rightharpoonup \mu$ according to \cref{d: weak convergence of measures}; 
    \item for any open set $U \subset \R^d$ it holds $\mu(U) \leq \liminf_{k \to \infty} \mu_k(U)$.
\end{itemize}
\end{proposition}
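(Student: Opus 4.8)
The statement is a form of the classical Portmanteau theorem, so the plan is to prove the two implications separately, using that every finite positive Borel measure on $\R^d$ is Radon (in particular inner and outer regular) and that the measures involved are non-negative, so that $\mu_k(\R^d)=\|\mu_k\|$ and the hypothesis $\mu_k(\R^d)\to\mu(\R^d)$ supplies a uniform mass bound.

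For the implication ``$\mu_k\rightharpoonup\mu$ $\Rightarrow$ lower semicontinuity on open sets'' I would fix an open set $U\subset\R^d$ and exploit the regularity identity $\mu(U)=\sup\{\int_{\R^d}\varphi\,d\mu:\varphi\in C_c(\R^d),\ 0\le\varphi\le 1,\ \spt\varphi\subset U\}$, which follows from Urysohn's lemma and inner regularity. For any competitor $\varphi$ one has $\int_{\R^d}\varphi\,d\mu_k\le\mu_k(U)$, since $\mathbbm 1_U-\varphi\ge 0$ and $\mu_k\ge 0$; letting $k\to\infty$ and using \eqref{eq: weak convergence test} gives $\int_{\R^d}\varphi\,d\mu\le\liminf_k\mu_k(U)$, and taking the supremum over $\varphi$ yields $\mu(U)\le\liminf_k\mu_k(U)$. (This direction does not even use the assumption on the total masses.)

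For the converse I would first promote the hypothesis to an upper bound on closed sets: for closed $C\subset\R^d$, applying it to the open set $\R^d\setminus C$ and using $\mu_k(\R^d)\to\mu(\R^d)$,
\[
\mu(\R^d)-\mu(C)=\mu(\R^d\setminus C)\le\liminf_k\mu_k(\R^d\setminus C)=\mu(\R^d)-\limsup_k\mu_k(C),
\]
hence $\limsup_k\mu_k(C)\le\mu(C)$. Next, for $\varphi\in C_c(\R^d)$ with $0\le\varphi\le M$, I would use Cavalieri's formula $\int_{\R^d}\varphi\,d\nu=\int_0^M\nu(\{\varphi>t\})\,dt$. Since the level sets $\{\varphi=t\}$ are pairwise disjoint, $\mu(\{\varphi=t\})=0$ for all but countably many $t$, and for each such $t$ the set $\{\varphi>t\}$ is open, $\{\varphi\ge t\}$ is closed, and they have the same $\mu$-measure, so
\[
\mu(\{\varphi>t\})\le\liminf_k\mu_k(\{\varphi>t\})\le\limsup_k\mu_k(\{\varphi\ge t\})\le\mu(\{\varphi\ge t\})=\mu(\{\varphi>t\}),
\]
forcing $\mu_k(\{\varphi>t\})\to\mu(\{\varphi>t\})$ for a.e.\ $t\in(0,M)$. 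As $0\le\mu_k(\{\varphi>t\})\le\mu_k(\R^d)$ is bounded uniformly in $k$, dominated convergence in $t$ gives $\int_{\R^d}\varphi\,d\mu_k\to\int_{\R^d}\varphi\,d\mu$; writing a general $\varphi\in C_c(\R^d)$ as $\varphi_+-\varphi_-$ with $\varphi_\pm\in C_c(\R^d)$ non-negative and applying this to each piece yields $\mu_k\rightharpoonup\mu$.

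The calculations are routine; the one genuinely delicate point is the passage to the limit under the $t$-integral in the second implication, which rests on the uniform bound $\sup_k\mu_k(\R^d)<\infty$ coming from the assumption on the total masses together with non-negativity (for signed measures one would first have to recover such a bound, and non-negativity is also used in the monotonicity step $\mu_k(\{\varphi>t\})\le\mu_k(\{\varphi\ge t\})$). Minor care is also needed for the countable exceptional set of ``bad'' levels $t$ and for the regularity of finite Borel measures, but these are classical.
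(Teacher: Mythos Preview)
Your proof is correct and follows essentially the same strategy as the paper. Both arguments hinge on the observation that lower semicontinuity on open sets together with convergence of the total mass yields upper semicontinuity on closed sets; the paper then simply cites \cite{EG15}*{Theorem 1.40} for the equivalence of these two one-sided bounds with weak convergence, whereas you supply a self-contained Portmanteau-type argument via Cavalieri's layer-cake formula. Your version thus fills in exactly the step the paper delegates to the literature, and your remark that non-negativity of the measures is implicitly used (for monotonicity and the uniform mass bound) is accurate and applies equally to the paper's version.
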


\begin{proof}
It is immediate to see that the convergence of the total mass together with the lower semicontinuity on open sets imply  $\mu(C) \geq \limsup_{k \to \infty} \mu_k(C)$ for all $C \subset \R^d$  closed. It is well known that having both lower semicontinuity on open sets and upper semicontinuity on compact sets is equivalent to weak convergence, see for instance \cite{EG15}*{Theorem 1.40}.
\end{proof}

The following proposition is part of the so-called \quotes{Portmanteau theorem} (see for instance \cite{K08}*{Theorem 13.16} for a proof).
\begin{proposition} \label{p: almost continuous test function}
Let $\mu_k,\mu\in \mathcal M(\R^d)$ be such that 
\begin{equation}\label{tight_convergence}
   \lim_{k\rightarrow \infty} \int_{\R^d} \varphi \,d\mu_k=\int_{\R^d} \varphi \,d\mu \qquad \forall \varphi \in C^0_b(\R^d).
\end{equation}
Let $f: \R^d \to \R$ be a bounded  Borel function and denote by $\disc (f)$ the set of its discontinuity points.  If $\mu(\disc (f))=0$ then
\begin{equation}
    \lim_{k \to \infty} \int_{\R^d} f \, d \mu_k = \int_{\R^d} f \, d \mu. \label{eq: testing with a.e. continuous}
\end{equation}
\end{proposition}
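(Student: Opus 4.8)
The plan is to deduce the claim from the classical \quotes{Portmanteau} comparison between the behaviour of $\{\mu_k\}$ on upper and on lower semicontinuous functions. I will carry this out for non-negative $\mu_k,\mu$, which is the convention under which \cite{K08} is stated and the only case we shall need; note that for genuinely signed measures the identity fails as literally written (take $\mu_k=\delta_{1/k}-\delta_{-1/k}$ on $\R$, $\mu=0$, and $f$ the indicator of $[0,\infty)$). First I would replace $f$ by its semicontinuous envelopes
\[
f^{*}(x):=\inf_{\delta>0}\ \sup_{\abs{y-x}<\delta}f(y),\qquad f_{*}(x):=\sup_{\delta>0}\ \inf_{\abs{y-x}<\delta}f(y),
\]
which are bounded, with $f^{*}$ upper semicontinuous, $f_{*}$ lower semicontinuous, $f_{*}\le f\le f^{*}$ on $\R^d$, and $f_{*}(x)=f(x)=f^{*}(x)$ precisely when $f$ is continuous at $x$. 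Hence $\{f_{*}<f^{*}\}=\disc(f)$ is $\mu$-negligible by hypothesis, so that $\int_{\R^d}f_{*}\,d\mu=\int_{\R^d}f\,d\mu=\int_{\R^d}f^{*}\,d\mu$.

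The core of the argument is a semicontinuous form of \eqref{tight_convergence}: for every bounded upper semicontinuous $g$ one has $\limsup_{k\to\infty}\int_{\R^d}g\,d\mu_k\le\int_{\R^d}g\,d\mu$, and, symmetrically, $\liminf_{k\to\infty}\int_{\R^d}h\,d\mu_k\ge\int_{\R^d}h\,d\mu$ for bounded lower semicontinuous $h$. To obtain the first inequality I would approximate $g$ from above by the Moreau--Yosida regularizations $g_n(x):=\sup_{y\in\R^d}\{g(y)-n\abs{x-y}\}$: these are $n$-Lipschitz, satisfy $g\le g_n\le\sup g$ (hence lie in $C^0_b(\R^d)$), and decrease pointwise to $g$ as $n\to\infty$ by upper semicontinuity and boundedness of $g$. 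Since $\mu_k\ge 0$ and $g\le g_n$, for each fixed $n$ we have $\int_{\R^d}g\,d\mu_k\le\int_{\R^d}g_n\,d\mu_k$, whence by \eqref{tight_convergence} applied to $g_n\in C^0_b(\R^d)$,
\[
\limsup_{k\to\infty}\int_{\R^d}g\,d\mu_k\ \le\ \lim_{k\to\infty}\int_{\R^d}g_n\,d\mu_k\ =\ \int_{\R^d}g_n\,d\mu ;
\]
letting $n\to\infty$ and invoking dominated convergence with respect to the finite measure $\mu$ gives $\limsup_{k}\int_{\R^d}g\,d\mu_k\le\int_{\R^d}g\,d\mu$. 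The statement for $h$ follows by applying this to $g=-h$.

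It then remains to combine the two ingredients. Using $f\le f^{*}$ and the upper semicontinuous bound,
\[
\limsup_{k\to\infty}\int_{\R^d}f\,d\mu_k\ \le\ \limsup_{k\to\infty}\int_{\R^d}f^{*}\,d\mu_k\ \le\ \int_{\R^d}f^{*}\,d\mu\ =\ \int_{\R^d}f\,d\mu,
\]
while $f\ge f_{*}$ and the lower semicontinuous bound give $\liminf_{k\to\infty}\int_{\R^d}f\,d\mu_k\ge\int_{\R^d}f_{*}\,d\mu=\int_{\R^d}f\,d\mu$; together these two inequalities are exactly \eqref{eq: testing with a.e. continuous}. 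I expect the only genuinely delicate point to be the semicontinuous comparison — and within it the monotone approximation of a bounded upper semicontinuous function by Lipschitz functions from above, which is where both the boundedness of $g$ and the non-negativity of the measures $\mu_k$ are truly used.
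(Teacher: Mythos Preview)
Your argument is correct and is precisely the standard semicontinuous-envelope proof of this direction of the Portmanteau theorem; the paper does not give its own proof but simply refers to \cite{K08}*{Theorem 13.16}, so there is nothing to compare beyond noting that you have written out what that reference contains. Your observation about signed measures is a genuine catch: under the paper's convention that $\mathcal{M}(\R^d)$ consists of finite \emph{signed} measures, the proposition is false as stated, and your example $\mu_k=\delta_{1/k}-\delta_{-1/k}$, $f=\mathbbm{1}_{[0,\infty)}$ demonstrates this cleanly. The paper only invokes the result for the non-negative measures $r^{-1}\mathcal{H}^d\llcorner(\Sigma)_r^{\rm in}$ arising from Minkowski contents, so the application is unaffected, but the hypothesis should indeed read $\mu_k,\mu\ge 0$.
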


\subsection{Functions of bounded variation} Let $\Omega \subset \R^d$ be an open set. We say that $f \in L^1(\Omega)$ is a function of \emph{bounded variation} if its distributional gradient  is represented by a finite  measure on $\Omega$, i.e. we set $BV(\Omega) = \{ f \in L^1(\Omega) \,\colon\, \nabla f \in \mathcal{M}(\Omega) \}$.
An $m$-dimensional vector field $f:\Omega\rightarrow \R^m$ is said to be of bounded variation if all its components are $BV$ functions.  The space of vector fields with bounded variation will be denoted by $BV(\Omega;\R^m)$, or, slightly abusing the notation, simply by $BV(\Omega)$ when no confusion can occur. We refer to the monograph \cite{AFP00} for an extensive discussion of the theory of $BV$ functions. Here we only 
recall from \cite{AFP00}*{Theorem 3.87} that a $BV$ vector field on a Lipschitz domain admits a notion of trace on the boundary. 

\begin{theorem}[Boundary trace]\label{T:trace_in_BV}
Let $\Omega\subset \R^d$ be a bounded open set with  Lipschitz boundary and $f\in BV(\Omega;\R^m)$. There exists $f^\Omega\in L^1(\partial \Omega;\mathcal H^{d-1})$ such that 
$$ \lim_{r\rightarrow  0}\frac{1}{r^d} \int_{B_r (x)\cap\Omega} \left|f(y)- f^\Omega (x)\right|\,dy=0\qquad \text{for } \mathcal{H}^{d-1}\text{-a.e. } x\in \partial \Omega. $$
Moreover, the extension $\tilde f$ of $f$ to zero outside $\Omega$ belongs to $BV(\R^d;\R^m)$ and
$$
\nabla \tilde f=(\nabla f)\llcorner\Omega - (f^\Omega\otimes n) \mathcal{H}^{d-1} \llcorner\partial \Omega,
$$
being $n:\partial \Omega\rightarrow \mathbb{S}^{d-1}$ the outward unit normal.
\end{theorem}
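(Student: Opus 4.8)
The plan is to prove the statement componentwise (reducing to $m=1$) and locally: cover $\partial\Omega$ by finitely many boundary charts and fix a subordinate partition of unity, so that it is enough to treat the model configuration in which, after a rigid motion, $\Omega\cap U=\{(x',x_d):x'\in U',\ \gamma(x')<x_d<\gamma(x')+h\}$ for a Lipschitz $\gamma:U'\to\R$ and some $h>0$. To construct the trace I would use the behaviour of $BV$ functions along lines: for $\mathcal L^{d-1}$-a.e. $x'\in U'$ the section $t\mapsto f(x',t)$ has a good representative in $BV(\gamma(x'),\gamma(x')+h)$ with $\int_{U'}\mathrm{Var}(f(x',\cdot))\,dx'=\abs{D_{x_d}f}(\Omega\cap U)<\infty$; hence each such section has a right limit $\tau(x')$ at $\gamma(x')$. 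Bounding $\abs{f(x',\gamma(x')+s)}$ by the average of $\abs{f(x',\cdot)}$ over a fixed subinterval plus $\mathrm{Var}(f(x',\cdot))$ produces an $L^1(U')$ majorant, so dominated convergence yields $f(\cdot,\gamma(\cdot)+s)\to\tau$ in $L^1(U')$ as $s\downarrow0$. Declaring $f^{\Omega}(x',\gamma(x')):=\tau(x')$ and recalling $\mathcal H^{d-1}\llcorner(\partial\Omega\cap U)=\sqrt{1+\abs{\nabla\gamma}^2}\,\mathcal L^{d-1}$ gives $f^{\Omega}\in L^1(\partial\Omega;\mathcal H^{d-1})$; I would leave consistency across overlapping charts to be checked a posteriori from the pointwise characterization, which determines $f^{\Omega}$ uniquely $\mathcal H^{d-1}$-a.e.

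Next I would establish the extension formula. Writing $\div\varphi=\sum_k\div(\psi_k\varphi)$ for $\varphi\in C^1_c(\R^d;\R^d)$, it suffices to integrate by parts chart by chart: in the model domain the term along $x_d$ is handled by Fubini and the fundamental theorem of calculus for the one-dimensional good representative, its boundary contribution at $x_d=\gamma(x')$ being, by the $L^1$-slice convergence above, exactly an integral of $\tau$; the terms along $x_1,\dots,x_{d-1}$ are handled by a bi-Lipschitz straightening of the graph together with the $BV$ chain rule, which reproduces precisely the remaining part of the boundary integral once it is rewritten via $n=(\nabla\gamma,-1)/\sqrt{1+\abs{\nabla\gamma}^2}$ and $d\mathcal H^{d-1}=\sqrt{1+\abs{\nabla\gamma}^2}\,dx'$. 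Summing over the charts gives $\int_\Omega f\,\div\varphi=-\int_\Omega\varphi\cdot d(\nabla f)+\int_{\partial\Omega}f^{\Omega}(\varphi\cdot n)\,d\mathcal H^{d-1}$, hence $\tilde f\in BV(\R^d)$ and $\nabla\tilde f=(\nabla f)\llcorner\Omega-(f^{\Omega}\otimes n)\,\mathcal H^{d-1}\llcorner\partial\Omega$. (A quick alternative to the chart-wise integration by parts is to approximate $f$ by smooth functions strictly and pass to the limit, at the cost of invoking continuity of the trace under strict convergence.)

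The main obstacle is the final step, where I would upgrade the $L^1$-convergence of the flat slices to the solid-ball-averaged pointwise statement. From the gradient formula, $D\tilde f\llcorner\partial\Omega=-(f^{\Omega}\otimes n)\,\mathcal H^{d-1}\llcorner\partial\Omega$, and since $\partial\Omega$ is $(d-1)$-rectifiable this part of $D\tilde f$ must be its jump part. By the Federer--Vol'pert structure theorem, $\mathcal H^{d-1}$-a.e. point is either an approximate continuity point or an approximate jump point of $\tilde f$ with convergent one-sided $L^1$-blow-ups; combining this with the fact that at $\mathcal H^{d-1}$-a.e. $x\in\partial\Omega$ the measure-theoretic outer normal $n(x)$ exists and $\Omega$, $\R^d\setminus\overline\Omega$ both have density $\tfrac12$ there, one gets $\tilde f(x+r\,\cdot)\to a_x\,\mathbbm 1_{\{y\cdot n(x)<0\}}$ in $L^1_{\rm loc}(\R^d)$ as $r\downarrow0$ for some $a_x\in\R$, the exterior trace vanishing because $\tilde f\equiv0$ on the open set $\R^d\setminus\overline\Omega$. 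Equating with the jump part of $D\tilde f$ forces $a_x=f^{\Omega}(x)$. Finally, $\Omega$ having the half-space $\{y\cdot n(x)<0\}$ as its blow-up at $x$ means $\abs{(B_r(x)\cap\Omega)\,\triangle\,\{y\in B_r(x):(y-x)\cdot n(x)<0\}}=o(r^d)$, so restricting the blow-up convergence to $B_r(x)\cap\Omega$, on which $\tilde f=f$ and $f^{\Omega}(x)\,\mathbbm 1_{\{(y-x)\cdot n(x)<0\}}=f^{\Omega}(x)$ outside an $o(r^d)$ set, yields $r^{-d}\int_{B_r(x)\cap\Omega}\abs{f-f^{\Omega}(x)}\,dy\to0$ for $\mathcal H^{d-1}$-a.e. $x$. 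The delicate ingredients are thus the rigorous integration by parts on a merely Lipschitz domain and, above all, the first-order fine structure of $BV$ functions together with the rectifiability and a.e. density/normal properties of Lipschitz boundaries; since a self-contained argument essentially reproduces part of the De Giorgi--Federer theory, we ultimately rely on \cite{AFP00}*{Theorem 3.87}.
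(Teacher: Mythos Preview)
The paper does not prove this statement at all: it is recalled from \cite{AFP00}*{Theorem 3.87} without argument, as a standard background result. Your proposal correctly identifies this at the end, and the sketch you give (local Lipschitz charts, one-dimensional slicing to construct the trace, integration by parts to get the extension formula, then Federer--Vol'pert to upgrade to the solid-ball Lebesgue point property) is essentially the proof in \cite{AFP00}, so there is nothing to compare.
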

 
\begin{remark}[$BV$ vs. normal trace]\label{R: BV vs normal trace}
In \cite{DRINV23}*{$(ii)$-Proposition 5.5} it has been proved that if $u\in BV(\Omega)\cap L^\infty(\Omega)$ then the outward normal Lebesgue trace $u^{\partial\Omega}_n$ exists and it is given by $u^{\partial\Omega}_n=u^\Omega \cdot n$, where $u^\Omega$ is the full $BV$ trace on $\partial \Omega$ of the vector field $u$ and $n:\partial \Omega\rightarrow \R^d$ is the outward unit normal to $\partial \Omega$. Note that in \cite{DRINV23}*{$(ii)$-Proposition 5.5} the assumption $u\in L^\infty$ is not necessary and $u\in BV(\Omega)$ is enough. On the other hand, it is easy to find  $u\not\in BV(\Omega)$ which admits a normal Lebesgue trace. Indeed, on $\Omega=\R^2_+$, the vector field $u(x,y)=(g(y),0)$ always satisfies $u^{\partial\Omega}_n\equiv 0$ (and moreover $\div u=0$) but it is not $BV_{\rm loc}(\R^2_+)$ as soon as $g\not \in BV_{\rm loc}(\R_+)$.
\end{remark}

We also recall the standard DiPerna--Lions \cite{DipLi89} and Ambrosio \cite{Ambr04} commutator estimate. For any function $f$ we denote by $f_\e$ its mollification. 

\begin{lemma} \label{L:commutator}
Let $O \subset \R^d$ be an open set, $u \in BV_{\rm loc}(O)$ be a vector field and $\rho \in L^\infty(O)$. Then, for every compact set $K \subset \joinrel \subset O$ it holds 
    \[
        \limsup_{\eps\to 0 } \norm{u \cdot \nabla \rho_\eps-\div(\rho  u)_\eps}_{L^1(K)}
                \leq C \norm{\rho}_{L^\infty(O)} \abs{\nabla u}(K ).
    \]
\end{lemma}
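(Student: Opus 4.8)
The plan is to write the commutator in the classical Friedrichs form and then control the resulting double integral by a difference-quotient bound for $BV$ functions. Fix the mollifier $\eta\in C^\infty_c(B_1)$ with $\int\eta=1$ used to define $f_\eps=f*\eta_\eps$, where $\eta_\eps(\cdot)=\eps^{-d}\eta(\cdot/\eps)$. Since $\rho\in L^\infty(O)$ and $u\in BV_{\rm loc}(O)\subset L^1_{\rm loc}(O)$, the vector field $\rho u$ is in $L^1_{\rm loc}(O)$, so all the objects below are well defined on $\{x:\dist(x,\partial O)>\eps\}$. For such $x$ a direct computation gives
$$
u(x)\cdot\nabla\rho_\eps(x)-\div(\rho u)_\eps(x)=\int_{\R^d}\rho(y)\,\bigl(u(x)-u(y)\bigr)\cdot\nabla\eta_\eps(x-y)\,dy=:r_\eps(x).
$$

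Next I would perform the change of variables $y=x-\eps z$ and use $\nabla\eta_\eps(x-y)=\eps^{-d-1}(\nabla\eta)(z)$, together with the bound $\abs{\rho(x-\eps z)}\le\norm{\rho}_{L^\infty(O)}$ (valid for $x\in K$, $\abs z\le 1$, $\eps<\dist(K,\partial O)$, since then $x-\eps z\in O$). Integrating over $K$ and applying Fubini yields, for all such $\eps$,
$$
\norm{r_\eps}_{L^1(K)}\le\norm{\rho}_{L^\infty(O)}\int_{B_1}\abs{\nabla\eta(z)}\,\frac1\eps\Bigl(\int_K\abs{u(x)-u(x-\eps z)}\,dx\Bigr)dz.
$$
The key analytic input is then the standard translation estimate for $BV$ functions: for $w\in BV_{\rm loc}(O)$, a compact set $K\subset\joinrel\subset O$ and $\abs h$ small enough, $\int_K\abs{w(x+h)-w(x)}\,dx\le\abs h\,\abs{\nabla w}(K_{\abs h})$, where $K_{\abs h}:=\{x:\dist(x,K)\le\abs h\}$; this is proved by approximating $w$ in $L^1$ by smooth functions with $\abs{\nabla w_n}\rightharpoonup\abs{\nabla w}$, or may simply be quoted from \cite{AFP00}. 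Applying it with $h=-\eps z$ (so $\abs h=\eps\abs z\le\eps$ and $K_{\abs h}\subseteq K_\eps$) gives $\frac1\eps\int_K\abs{u(x)-u(x-\eps z)}\,dx\le\abs z\,\abs{\nabla u}(K_\eps)$, hence
$$
\norm{r_\eps}_{L^1(K)}\le C\,\norm{\rho}_{L^\infty(O)}\,\abs{\nabla u}(K_\eps),\qquad C:=\int_{B_1}\abs z\,\abs{\nabla\eta(z)}\,dz<\infty,
$$
with $C$ depending only on the chosen mollifier.

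Finally I would let $\eps\downarrow 0$: since $K$ is closed one has $\bigcap_{\eps>0}K_\eps=K$, and $\abs{\nabla u}$ is a Radon measure on $O$, finite on $K_{\eps_0}$ once $\eps_0$ is small enough that $K_{\eps_0}\subset\joinrel\subset O$; continuity from above of $\abs{\nabla u}$ then gives $\abs{\nabla u}(K_\eps)\to\abs{\nabla u}(K)$. Taking $\limsup_{\eps\to 0}$ in the displayed bound yields $\limsup_{\eps\to 0}\norm{u\cdot\nabla\rho_\eps-\div(\rho u)_\eps}_{L^1(K)}\le C\,\norm{\rho}_{L^\infty(O)}\,\abs{\nabla u}(K)$, which is the claim. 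The only genuinely non-routine ingredient is the $BV$ translation estimate (and keeping track that all the enlarged neighbourhoods $K_\eps$ stay inside $O$, which is handled by restricting to small $\eps$); the rest is bookkeeping, and one should note explicitly that the constant is independent of $\rho$, $u$ and $K$.
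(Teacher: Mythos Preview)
Your argument is correct and is the standard route to this estimate. Note, however, that the paper does not actually prove \cref{L:commutator}: it is stated without proof, merely recalled as the classical DiPerna--Lions/Ambrosio commutator estimate with a reference to \cites{DipLi89,Ambr04}. So there is no ``paper's own proof'' to compare against; your write-up supplies exactly the standard justification one would expect. The only points worth tightening are cosmetic: make explicit that $\div(\rho u)_\eps$ is interpreted as the mollification of the distribution $\div(\rho u)$ (equivalently $\div\big((\rho u)_\eps\big)$), and cite the precise $BV$ translation inequality you use (e.g.\ \cite{AFP00}*{Remark~3.25} or the analogous statement) rather than leaving it implicit.
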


\subsection{Slicing and traces}

We recall the following property from the slicing theory of Sobolev functions. The trace of a Sobolev function on a bounded open set with Lipschitz boundary is defined according to \cref{T:trace_in_BV}. Since we were not able to find a reference, we also give the (simple) proof.  

\begin{proposition}
\label{p: slicing and traces}
Let $\Omega \subset \R^d$ be a bounded open set with Lipschitz boundary and let $ u \in W^{1,1}(\Omega \times (0,T))$. Then, for a.e. $t \in (0,T)$ it holds that $u(\cdot, t) \in W^{1,1}(\Omega)$ and $u(\cdot, t)^{\Omega} = u^{\Omega \times (0,T)}(\cdot, t)$ as functions in $L^1(\partial \Omega; \mathcal{H}^{d-1})$. 
\end{proposition}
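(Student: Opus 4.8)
The plan is to reduce the statement to two essentially separable facts: first, that for almost every time $t$ the slice $u(\cdot,t)$ lies in $W^{1,1}(\Omega)$, and second, that the $BV$-trace (equivalently, the $W^{1,1}$-trace) of the slice agrees $\mathcal{H}^{d-1}$-a.e. with the slice of the full trace. The first fact is standard Fubini-type slicing: since $u\in W^{1,1}(\Omega\times(0,T))$, Fubini's theorem applied to $|u|$, $|\nabla_x u|$ and $|\partial_t u|$ shows that for a.e.\ $t$ the function $u(\cdot,t)$ together with its spatial weak gradient (which, again by Fubini, coincides with $(\nabla_x u)(\cdot,t)$) is integrable on $\Omega$, hence $u(\cdot,t)\in W^{1,1}(\Omega)$.

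For the trace identification, the cleanest route is via the extension-by-zero characterization in \cref{T:trace_in_BV}. Let $\tilde u$ denote the extension of $u$ by zero outside $\Omega\times(0,T)$; since $\Omega\times(0,T)$ is a bounded Lipschitz domain in $\R^{d+1}$ (a product of Lipschitz domains is Lipschitz) and $u\in W^{1,1}\subset BV$, \cref{T:trace_in_BV} gives $\tilde u\in BV(\R^{d+1})$ with
\[
\nabla \tilde u = (\nabla u)\llcorner(\Omega\times(0,T)) - \big(u^{\Omega\times(0,T)}\otimes n_{\Omega\times(0,T)}\big)\,\mathcal{H}^{d}\llcorner\big(\partial\Omega\times(0,T)\big) + (\text{terms on }\Omega\times\{0,T\}).
\]
On the lateral boundary $\partial\Omega\times(0,T)$ the outer normal is $(n_\Omega,0)$, so the spatial components of this measure are $(\nabla_x u)\llcorner(\Omega\times(0,T)) - (u^{\Omega\times(0,T)}\otimes n_\Omega)\,\mathcal{H}^{d}\llcorner(\partial\Omega\times(0,T))$. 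On the other hand, for a.e.\ $t$ one may extend $u(\cdot,t)$ by zero in $\R^d$ to get, again by \cref{T:trace_in_BV}, $\nabla \widetilde{u(\cdot,t)} = (\nabla_x u)(\cdot,t)\llcorner\Omega - (u(\cdot,t)^\Omega\otimes n_\Omega)\,\mathcal{H}^{d-1}\llcorner\partial\Omega$. The goal is to show these two descriptions are consistent after slicing in $t$, which forces $u(\cdot,t)^\Omega = u^{\Omega\times(0,T)}(\cdot,t)$ for a.e.\ $t$.

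To make the slicing rigorous I would test against product functions $\varphi(x)\psi(t)$ with $\varphi\in C^1_c(\R^d)$, $\psi\in C^1_c(0,T)$, use the integration-by-parts identity $\int \tilde u\,\partial_{x_i}(\varphi\psi) = -\int \varphi\psi\,d(\nabla\tilde u)_i$, apply Fubini on both sides, and recognize that for a.e.\ $t$ the inner integral identity reads $\int_\Omega u(\cdot,t)\,\partial_{x_i}\varphi = -\int \varphi\,d(\nabla\widetilde{u(\cdot,t)})_i$ against the candidate measure with trace $u^{\Omega\times(0,T)}(\cdot,t)$. Since the trace in \cref{T:trace_in_BV} is uniquely determined (it is the $L^1$-density of the singular part of $\nabla$ of the zero-extension on $\partial\Omega$ with respect to $\mathcal{H}^{d-1}$), matching against the analogous identity for $u(\cdot,t)^\Omega$ yields the equality of the two $L^1(\partial\Omega;\mathcal{H}^{d-1})$ functions for a.e.\ $t$. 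Alternatively, and perhaps more transparently, one can work directly with the Lebesgue-point characterization: for $\mathcal{H}^d$-a.e.\ $(x,t)\in\partial\Omega\times(0,T)$ the $(d+1)$-dimensional averages $r^{-(d+1)}\int_{B_r(x,t)\cap(\Omega\times(0,T))}|u-u^{\Omega\times(0,T)}(x,t)|$ vanish; by a Fubini/Vitali argument, for a.e.\ $t$ this upgrades to the $d$-dimensional averages $r^{-d}\int_{B_r(x)\cap\Omega}|u(\cdot,t)-u^{\Omega\times(0,T)}(x,t)|\to 0$ for $\mathcal{H}^{d-1}$-a.e.\ $x$, which is exactly the defining property of $u(\cdot,t)^\Omega$, hence the two coincide by uniqueness of the trace.

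The main obstacle is the interchange of the time-slicing with the boundary blow-up: the trace is defined through a limit of spatial averages at $\mathcal{H}^{d-1}$-a.e.\ boundary point, and one must ensure that the null set of bad boundary points for the full trace, when sliced in $t$, remains null for $\mathcal{H}^{d-1}$ for a.e.\ $t$ — this is a Fubini statement for $\mathcal{H}^d\llcorner(\partial\Omega\times(0,T)) = (\mathcal{H}^{d-1}\llcorner\partial\Omega)\otimes dt$, valid because $\partial\Omega$ is Lipschitz — together with the passage from $(d+1)$-dimensional to $d$-dimensional density quotients, which requires a careful comparison of the measures of $B_r(x,t)\cap(\Omega\times(0,T))$ and $B_r(x)\cap\Omega$ and a dominated-convergence argument in the $t$-variable. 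Once these measure-theoretic bookkeeping steps are in place, the identification is immediate from the uniqueness of the trace.
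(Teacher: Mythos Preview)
Your first route --- testing against product functions $\varphi(x)\psi(t)$ and reading off the integration-by-parts identity slice by slice --- is exactly what the paper does; the paper simply works directly with the Gauss--Green identity on $\Omega\times(0,T)$ rather than detouring through the zero-extension characterization of \cref{T:trace_in_BV}, but the two formulations are equivalent. The one step you gloss over that the paper makes explicit is the \emph{countable density} argument: for a fixed $\varphi$ the identity holds only for $t$ outside a null set $\mathcal{N}_\varphi$, so to obtain a single null set one takes a countable dense family $\mathcal{D}\subset C^1_c(\R^d)$, sets $\mathcal{N}=\bigcup_{\varphi\in\mathcal{D}}\mathcal{N}_\varphi$, and extends to all $\varphi$ by approximation --- routine, but it should be stated.

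Your alternative Lebesgue-point route is genuinely different and would require the measure-theoretic bookkeeping you correctly flag (passing from $(d+1)$-dimensional to $d$-dimensional density quotients for $\mathcal{H}^{d-1}$-a.e.\ $x$ at a.e.\ $t$); it can be made to work but is considerably heavier than the testing argument, which is why the paper avoids it.
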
 

\begin{proof}
Since $u \in W^{1,1}(\Omega \times (0,T))$, by Fubini theorem we can assume that $u(\cdot, t), \nabla u(\cdot, t) \in L^1(\Omega)$ and $u^{\Omega \times (0,T)}(\cdot, t) \in L^1(\partial \Omega)$ for almost every $t \in (0,T)$. Then, for any $\alpha \in C^\infty_c((0,T)), \varphi \in C^1_c(\R^d)$, we have  
\begin{equation}
    \int_0^T \alpha(t) \left(  \int_{\Omega} u(x,t) \partial_i \varphi (x) \, dx + \int_{\Omega} \partial_i u(x,t) \varphi(x) \, dx - \int_{\partial \Omega} \varphi(x) u^{\Omega\times (0,T)}(x,t) n_i (x) \, d\mathcal{H}^{d-1} \right) \, dt = 0. 
\end{equation}
Therefore, we find a negligible set of times $\mathcal{N}_{\varphi} \subset (0,T)$ such that for any $t \in (0,T) \setminus \mathcal{N}_\varphi $ it holds 
\begin{equation}
    \int_{\Omega} u(x,t) \partial_i \varphi (x) \, dx = - \int_{\Omega} \partial_i u(x,t) \varphi(x) \, dx + \int_{\partial \Omega} \varphi(x) u^{\Omega\times (0,T)}(x,t) n_i(x) \, d\mathcal{H}^{d-1}. \label{eq: slicing 1}
\end{equation}
Letting $\mathcal{D} \subset C^{1}_c(\R^d)$ countable and dense, we find a negligible set of times $\mathcal{N}\subset (0,T)$ such that \eqref{eq: slicing 1} holds for any $t \in (0,T) \setminus \mathcal{N}$ and for any $\varphi \in \mathcal{D}$. Thus, by a standard approximation argument, \eqref{eq: slicing 1} is valid for any $t \in (0,T) \setminus \mathcal{N}$ and for any $\varphi \in C^1_c(\R^d)$. Hence, given $t \in (0,T) \setminus \mathcal{N}$, we have that $u(\cdot, t) \in W^{1,1}(\Omega)$ and $u(\cdot, t)^{\Omega} = u^{\Omega \times (0,T)} (\cdot, t)$ as functions in $L^1(\partial \Omega)$. 
\end{proof}

\subsection{Measure-divergence vector fields}

Here we provide some basic facts on gluing and multiplications of measure-divergence vector fields. More general statements can be found in \cite{CCT19}.

\begin{lemma} \label{l: gluing lemma}
Let $\Omega \subset \R^d$ be a bounded open set with Lipschitz boundary. Let $u$ be a vector field in $L^\infty(\R^d)$. Assume that $u \in \mathcal{MD}^\infty(\Omega) \cup \mathcal{MD}^\infty(\R^d \setminus \overline{\Omega})$.  Then, $u \in \mathcal{MD}^\infty(\R^d)$ and it holds
\begin{equation}
    \div u = (\div u)\llcorner \Omega + (\div u) \llcorner (\R^d \setminus \overline{\Omega}) - \left(\tr_n(u; \partial \Omega) + \tr_n(u; \partial (\R^d \setminus \overline{\Omega})) \right) \mathcal{H}^{d-1} \llcorner \partial \Omega. \label{eq: formula gluing}
\end{equation} 
\end{lemma}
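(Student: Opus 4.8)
The plan is to establish \eqref{eq: formula gluing} directly as an identity of distributions on $\R^d$. I fix an arbitrary $\varphi\in C^\infty_c(\R^d)$, split the pairing $\langle\div u,\varphi\rangle=-\int_{\R^d}u\cdot\nabla\varphi\,dx$ into the contributions of $\Omega$ and of $\R^d\setminus\overline\Omega$ (their interface $\partial\Omega$ being Lebesgue-negligible), and on each of these two open sets I apply the defining identity \eqref{distr_norm_trace} of the distributional normal trace from \cref{D:ditrib norm trace}, using the measure-divergence hypothesis on each side.

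Since \cref{D:ditrib norm trace} is stated for bounded domains, I first fix $R>0$ with $\supp\varphi\cup\overline\Omega\subset B_R$, so that $\Omega':=(\R^d\setminus\overline\Omega)\cap B_R$ is a bounded open set with Lipschitz boundary $\partial\Omega'=\partial\Omega\sqcup\partial B_R$, on which $u$ is still of measure divergence. The component $\partial B_R$ is harmless because $\varphi\equiv0$ there, and a routine localization shows that the restriction to $\partial\Omega$ of $\tr_n(u;\partial\Omega')$ is independent of $R$; this restriction is what $\tr_n(u;\partial(\R^d\setminus\overline\Omega))$ denotes. Because $u\in\mathcal{MD}^\infty$ on both $\Omega$ and $\Omega'$, each distributional normal trace is induced by an $L^\infty$ density on the corresponding (bounded, Lipschitz) boundary, so \eqref{distr_norm_trace} on $\Omega$ and on $\Omega'$ reads
\[
\int_\Omega u\cdot\nabla\varphi\,dx=\int_{\partial\Omega}\varphi\,\tr_n(u;\partial\Omega)\,d\mathcal H^{d-1}-\int_\Omega\varphi\,d\big((\div u)\llcorner\Omega\big),
\]
\[
\int_{\R^d\setminus\overline\Omega} u\cdot\nabla\varphi\,dx=\int_{\partial\Omega}\varphi\,\tr_n\big(u;\partial(\R^d\setminus\overline\Omega)\big)\,d\mathcal H^{d-1}-\int_{\R^d\setminus\overline\Omega}\varphi\,d\big((\div u)\llcorner(\R^d\setminus\overline\Omega)\big),
\]
where in the second line I used $\int_{\R^d\setminus\overline\Omega}(\cdots)=\int_{\Omega'}(\cdots)$ (as $\supp\varphi\subset B_R$) and that $(\div u)\llcorner\Omega'$ coincides with $(\div u)\llcorner(\R^d\setminus\overline\Omega)$ on $\supp\varphi$.

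Adding these two identities and using $\langle\div u,\varphi\rangle=-\int_{\R^d}u\cdot\nabla\varphi\,dx$ gives
\[
\begin{aligned}
\langle\div u,\varphi\rangle={}&\int_\Omega\varphi\,d\big((\div u)\llcorner\Omega\big)+\int_{\R^d\setminus\overline\Omega}\varphi\,d\big((\div u)\llcorner(\R^d\setminus\overline\Omega)\big)\\
&{}-\int_{\partial\Omega}\varphi\,\big(\tr_n(u;\partial\Omega)+\tr_n(u;\partial(\R^d\setminus\overline\Omega))\big)\,d\mathcal H^{d-1},
\end{aligned}
\]
which is precisely the pairing of $\varphi$ with the right-hand side of \eqref{eq: formula gluing}. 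Since $\varphi$ is arbitrary, $\div u$ equals that right-hand side in the sense of distributions. Finally the right-hand side is a finite measure on $\R^d$: the two interior terms are finite by the measure-divergence hypotheses, and the boundary term is finite because $\partial\Omega$ is compact with $\mathcal H^{d-1}(\partial\Omega)<\infty$ and both normal traces lie in $L^\infty(\partial\Omega;\mathcal H^{d-1})$. Together with $u\in L^\infty(\R^d)$, this yields $u\in\mathcal{MD}^\infty(\R^d)$ and \eqref{eq: formula gluing}.

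I do not anticipate a substantial obstacle: the heart of the proof is a one-line application of \eqref{distr_norm_trace} on each side of $\partial\Omega$. The only points that need a bit of care are bookkeeping ones — reducing the unbounded exterior $\R^d\setminus\overline\Omega$ to the bounded Lipschitz domain $(\R^d\setminus\overline\Omega)\cap B_R$ so that \cref{D:ditrib norm trace} applies verbatim (with the spurious boundary $\partial B_R$ playing no role since $\varphi$ is compactly supported), and checking that the candidate right-hand side is a genuine finite measure, which rests only on $\mathcal H^{d-1}(\partial\Omega)<\infty$ and on the $L^\infty$-regularity of normal traces of $\mathcal{MD}^\infty$ vector fields.
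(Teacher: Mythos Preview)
Your proposal is correct and follows essentially the same route as the paper: split $\langle\div u,\varphi\rangle=-\int_{\R^d}u\cdot\nabla\varphi$ into the contributions of $\Omega$ and $\R^d\setminus\overline\Omega$, apply \eqref{distr_norm_trace} on each side, and read off \eqref{eq: formula gluing}. The paper's version is terser and does not explicitly address the unboundedness of $\R^d\setminus\overline\Omega$, while you handle this carefully by localizing to $B_R$; this extra care is legitimate but not a different approach.
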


\begin{proof}
Denote by $(\div u) \llcorner \Omega = \mu_1 \in \mathcal{M}(\Omega)$ and $( \div u) \llcorner (\R^d \setminus \overline{\Omega}) = \mu_2 \in \mathcal{M}(\R^d \setminus \overline{\Omega})$. Given a test function $\varphi \in C^\infty_c(\R^d)$, by \eqref{distr_norm_trace} we get 
\begin{align}
    \langle \div u, \varphi \rangle & = - \int_{\Omega} u\cdot \nabla \varphi \, dx - \int_{\R^d \setminus \overline{\Omega}} u \cdot \nabla \varphi \, dx 
    \\ & = \int_{\Omega} \varphi \, d \mu_1 + \int_{\R^d \setminus \overline{ \Omega}} \varphi \, d\mu_2 - \left \langle \tr_n(u; \partial \Omega) + \tr_n(u; \partial (\R^d \setminus \overline \Omega)) , \varphi \right\rangle, 
\end{align}
thus proving \eqref{eq: formula gluing}, since $\tr_n(u; \partial \Omega), \tr_n(u; \partial (\R^d \setminus \overline \Omega)) \in L^\infty( \partial \Omega)$.
\end{proof}

\begin{lemma} \label{l: product rule}
Let $\Omega \subset \R^d$ be a bounded open set with Lipschitz boundary. Let $h \in W^{1,1}(\Omega) \cap L^\infty(\Omega)$ be a scalar function and $u \in L^{\infty}(\Omega)$ be a vector field such that $\div u \in L^1(\Omega)$. Then, $h u \in \mathcal{MD}^{\infty}(\Omega)$ and 
\begin{equation} \label{eq: product rule divergence}
    \div(hu) = h \div u + u \cdot \nabla h, 
\end{equation}
\begin{equation} \label{eq: product rule trace}
    \tr_n(h u; \partial \Omega) = h^{ \Omega} \tr_n(u; \partial \Omega), \
\end{equation}
where $h^{\Omega}$ is the trace of $h$ on $\partial \Omega$ in the sense of Sobolev functions (see \cref{T:trace_in_BV}).  
\end{lemma}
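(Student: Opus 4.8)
The statement asserts three things about $hu$ when $h\in W^{1,1}(\Omega)\cap L^\infty(\Omega)$ and $u\in L^\infty(\Omega)$ with $\div u\in L^1(\Omega)$: that $hu\in\mathcal{MD}^\infty(\Omega)$, the product rule \eqref{eq: product rule divergence} for the divergence, and the trace identity \eqref{eq: product rule trace}. The natural approach is approximation by mollification together with the commutator lemma (\cref{L:commutator}) and the gluing lemma (\cref{l: gluing lemma}).

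First I would establish \eqref{eq: product rule divergence} in the interior, i.e. as an identity of distributions (equivalently Radon measures) on $\Omega$. Since $h\in L^\infty$ and $u\in L^\infty$, the product $hu\in L^\infty(\Omega)\subset L^\infty$; the only thing to check is that its distributional divergence is the $L^1$ function $h\div u+u\cdot\nabla h$. Mollify: for a compact $K\subset\joinrel\subset O\subset\joinrel\subset\Omega$ one has $\div((hu)_\eps)=u_\eps\cdot\nabla h_\eps + h_\eps\div u_\eps$ pointwise, but that is not quite the commutator form. Better: write $\div(hu)_\eps - h(\div u)_\eps - u\cdot\nabla h_\eps = [\div(hu)_\eps - u\cdot\nabla h_\eps] + [u\cdot\nabla h_\eps - h_\eps\,\text{(something)}]$... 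Actually the cleanest route is to apply \cref{L:commutator} with the roles reversed — note $h\in W^{1,1}\subset BV_{\rm loc}$ and $u\in L^\infty$, so $\limsup_\eps\|h\cdot\nabla u_\eps - \div(hu)_\eps\|_{L^1(K)}\le C\|h\|_{L^\infty}|\nabla u|(K)$ — but here $|\nabla u|$ is only a distribution, not a measure, so that does not directly apply. Instead, since $h\in W^{1,1}$, I would mollify and use that $h_\eps\to h$ in $W^{1,1}_{\rm loc}$ and a.e., $u_\eps\to u$ in $L^1_{\rm loc}$ and boundedly a.e., hence $h_\eps\div u_\eps \to h\div u$ and $u_\eps\cdot\nabla h_\eps\to u\cdot\nabla h$ in $L^1_{\rm loc}(\Omega)$ (using $\div u\in L^1$ for the first, dominated convergence and $\nabla h\in L^1$ for the second), while $\div((h_\eps u_\eps))=h_\eps\div u_\eps + u_\eps\cdot\nabla h_\eps\to \div(hu)$ in $\mathcal{D}'(\Omega)$. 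This gives \eqref{eq: product rule divergence} and shows $\div(hu)\in L^1(\Omega)$, hence $hu\in\mathcal{MD}^\infty(\Omega)$.

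For the trace identity \eqref{eq: product rule trace} I would use the gluing lemma together with \cref{T:trace_in_BV}. Extend $h$ to $\tilde h\in W^{1,1}(\R^d)\cap L^\infty$ by a Sobolev extension (a Lipschitz domain admits a bounded extension operator), and extend $u$ by zero to $\tilde u$; then $\tilde u\in L^\infty(\R^d)$, $\tilde u\in\mathcal{MD}^\infty(\Omega)$ by hypothesis and $\tilde u\equiv 0\in\mathcal{MD}^\infty(\R^d\setminus\overline\Omega)$ trivially. By the first part applied on $\Omega$, $\tilde h\tilde u=hu\in\mathcal{MD}^\infty(\Omega)$, and on $\R^d\setminus\overline\Omega$ it is zero, so \cref{l: gluing lemma} applies to $\tilde h\tilde u$: its global divergence carries a boundary term $-[\tr_n(hu;\partial\Omega)+\tr_n(hu;\partial(\R^d\setminus\overline\Omega))]\mathcal H^{d-1}\llcorner\partial\Omega$, and the second trace is $0$. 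On the other hand I can compute $\div(\tilde h\tilde u)$ globally via the distributional product rule on $\R^d$: test against $\varphi\in C^\infty_c(\R^d)$, integrate $-\int_{\R^d}\tilde h\tilde u\cdot\nabla\varphi$, and expand $\tilde u\cdot\nabla(\tilde h\varphi)$ — but $\tilde h\varphi$ is only $W^{1,1}\cap L^\infty$ with compact support, which is admissible as a test class in \eqref{distr_norm_trace} by the density remark after \cref{D:ditrib norm trace}. This yields $\langle\tr_n(u;\partial\Omega),\tilde h^{\,\text{(from inside)}}\varphi\rangle$ where the relevant boundary value of $\tilde h$ from the $\Omega$-side is exactly $h^\Omega$ by \cref{T:trace_in_BV}. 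Matching the two expressions for the boundary part of $\div(\tilde h\tilde u)$ gives \eqref{eq: product rule trace}.

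**Main obstacle.** The delicate point is the trace computation: making rigorous the manipulation "$\tilde u\cdot\nabla(\tilde h\varphi) = \tilde h\,\tilde u\cdot\nabla\varphi + \varphi\,\tilde u\cdot\nabla\tilde h$ inside the Gauss-Green identity" when $\tilde h$ is merely Sobolev — one must justify using $\tilde h\varphi$ (or rather $h\varphi$, the relevant object on $\Omega$) as a test function in \eqref{distr_norm_trace}, via mollification of $h$ and the fact (already used for the interior identity) that $\div(h_\eps u)\to\div(hu)$ in $L^1_{\rm loc}$, and via the fact that $(h_\eps)|_{\partial\Omega}\to h^\Omega$ in $L^1(\partial\Omega;\mathcal H^{d-1})$ for a suitable mollification (e.g. using that $h_\eps\to h$ in $W^{1,1}(\Omega)$ implies convergence of traces by continuity of the trace operator $W^{1,1}(\Omega)\to L^1(\partial\Omega)$). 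Once the admissibility of $W^{1,1}\cap L^\infty$ compactly supported test functions in \eqref{distr_norm_trace} is secured — which is really a statement about continuity of both sides of \eqref{distr_norm_trace} under $W^{1,1}$-approximation of $\varphi$, using $u\in L^\infty$ and $\div(hu)\in L^1$ — the rest is bookkeeping.
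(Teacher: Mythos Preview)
Your proposal is correct, and the essential ingredient you identify in your ``main obstacle'' paragraph --- approximate $h$ by smooth functions $h_\eps\to h$ in $W^{1,1}(\Omega)$ with $\|h_\eps\|_{L^\infty}\le\|h\|_{L^\infty}$ and use continuity of the trace operator $W^{1,1}(\Omega)\to L^1(\partial\Omega)$ --- is precisely what the paper uses. However, the paper's argument is considerably more direct than yours: it simply observes that both \eqref{eq: product rule divergence} and \eqref{eq: product rule trace} are trivial when $h\in C^\infty(\R^d)$, takes such an approximating sequence $h_\eps$ (existence granted by the Lipschitz boundary), and passes to the limit in both identities simultaneously. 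No gluing lemma, no extension of $u$ by zero, no separate mollification of $u$, and no detour through computing $\div(\tilde h\tilde u)$ on all of $\R^d$ are needed. Your route via \cref{l: gluing lemma} works, but the machinery is superfluous: once you concede (as you do in the obstacle paragraph) that the justification amounts to approximating $h$ in $W^{1,1}\cap L^\infty$ and tracking the limit of $h_\eps^\Omega$, you may as well write the Gauss--Green identity \eqref{distr_norm_trace} for $h_\eps u$ directly and let $\eps\to 0$.
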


\begin{proof}
Both \eqref{eq: product rule divergence} and \eqref{eq: product rule trace} are trivial if $h\in C^\infty(\R^d)$. Then, for $h \in W^{1,1}(\Omega) \cap L^\infty(\Omega)$, recalling that $\Omega$ is bounded with Lipschitz boundary, we find a sequence $\{h_\e\}_\e \subset C^{\infty}_c(\R^d)$ such that $h_\e \to h$ strongly in $W^{1,1}(\Omega)$, $\norm{h_\e}_{L^\infty(\Omega)} \leq \norm{h}_{L^\infty(\Omega)}$ and $h_\e(x) \to h(x)$ for a.e. $x \in \Omega$. Moreover, since the trace operator is continuous from $W^{1,1}(\Omega)$ to $L^1(\partial \Omega)$, we infer that $h_\e^{ \Omega} \to h^{ \Omega}$ strongly in $L^1( \Omega)$. Then, writing \eqref{eq: product rule divergence} and \eqref{eq: product rule trace} for $h_\e$, it is straightforward to pass to the limit as $\e \to 0$. 
\end{proof}

\subsection{Distance function and projection onto a closed set} Let $M \subset \R^d$ be a closed set. We recall that the distance function $d_M$ is Lipschitz continuous and thus almost everywhere differentiable on $\R^d$. 

\begin{lemma} \label{l:projection onto boundary}
Let $M$ be a closed set and define 
$$D= \left\{ x \in \R^d \,\colon \exists !\, y\in M \text{ s.\,t. } d_M(x)=|x-y|\right\}. $$
Then, $\mathcal H^d(D^c)=0$ and $M \subset D$. Let $\pi_M : D \to M$ be the projection onto $M$ that associates to $x$ the unique point of minimal distance. Then $\pi_{M}(x) = x$ for any $x \in M$. Moreover, if $\{x_j\}_j\subset D$ is any sequence  converging to $x \in M$,  it holds $\lim_{j \to \infty} \pi_{M} (x_j) = x$. 
In particular, if $f: M \to \R$ is a continuous function, then $f \circ \pi_M: D \to \R$ is continuous at any point in $M$. 
\end{lemma}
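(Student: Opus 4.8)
The plan is to settle the elementary structural assertions first and then to focus on the measure-zero statement, which is the only substantial point. The inclusion $M\subset D$ together with $\pi_M(x)=x$ for $x\in M$ is immediate: for $x\in M$ one has $d_M(x)=0$, and the only $y\in M$ with $|x-y|=0$ is $y=x$, so $x$ has a unique nearest point and it equals $x$ itself.

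For $\mathcal H^d(D^c)=0$ I would use that the auxiliary function $g(x):=|x|^2-d_M(x)^2=\sup_{y\in M}\bigl(2x\cdot y-|y|^2\bigr)$ is, being a supremum of affine functions and everywhere finite (since $2x\cdot y-|y|^2\le |x|^2$), a finite convex function on $\R^d$, hence differentiable outside a set of Lebesgue measure (equivalently $\mathcal H^d$-measure) zero. It then suffices to check that differentiability of $g$ at a point $x$ forces the metric projection to be well defined there: if $y$ is any nearest point to $x$, then $2x\cdot y-|y|^2=g(x)$ while $2z\cdot y-|y|^2\le g(z)$ for every $z$, so $2y$ lies in the subdifferential $\partial g(x)$; when $g$ is differentiable at $x$ this subdifferential is the singleton $\{\nabla g(x)\}$, forcing $y=\tfrac12\nabla g(x)$ and hence uniqueness. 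Therefore $D^c$ is contained in the non-differentiability set of $g$ and is $\mathcal H^d$-negligible. Alternatively, one may apply Rademacher's theorem to the $1$-Lipschitz function $d_M$: at a differentiability point $x\notin M$, comparing $d_M(x-hv)$ with $|x-hv-y|$ for a nearest point $y$ and $v=(x-y)/d_M(x)$ shows $\nabla d_M(x)=v$ via the equality case in Cauchy--Schwarz, which pins down $y=x-d_M(x)\nabla d_M(x)$.

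The remaining claims are then routine triangle-inequality arguments. If $\{x_j\}_j\subset D$ and $x_j\to x\in M$, then $|x_j-\pi_M(x_j)|=d_M(x_j)\le |x_j-x|\to 0$, so $|\pi_M(x_j)-x|\le |\pi_M(x_j)-x_j|+|x_j-x|\le 2|x_j-x|\to 0$, i.e.\ $\pi_M(x_j)\to x=\pi_M(x)$. Finally, for continuous $f:M\to\R$ and $x\in M$, any $D$-valued sequence $x_j\to x$ satisfies $\pi_M(x_j)\to\pi_M(x)$ by the previous step, whence $f(\pi_M(x_j))\to f(\pi_M(x))$ by continuity of $f$, so $f\circ\pi_M$ is continuous at $x$. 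The only genuinely non-trivial ingredient is the measure-zero claim, and within it the passage from differentiability of $g$ (or $d_M$) to uniqueness of the nearest point; everything else is bookkeeping.
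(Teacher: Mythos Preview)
Your proof is correct. The alternative you sketch via Rademacher applied to $d_M$ is exactly the paper's approach: the paper invokes the formula $\nabla d_M(x)=(x-y)/|x-y|$ at differentiability points (citing an earlier lemma) and observes that two nearest points $y_1,y_2$ with $|x-y_1|=|x-y_2|$ are then forced to coincide. Your primary argument through the convex function $g(x)=|x|^2-d_M(x)^2=\sup_{y\in M}(2x\cdot y-|y|^2)$ is a pleasant variant: it bypasses the need to establish the gradient formula for $d_M$ separately, trading Rademacher for the a.e.\ differentiability of finite convex functions and the fact that the subdifferential reduces to a singleton at such points. Both routes are essentially the same idea (differentiability of a Lipschitz/convex auxiliary function pins down the nearest point), and the triangle-inequality arguments for the continuity claims are identical to the paper's.
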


\begin{proof}
Following \cite{DRINV23}*{Lemma 2.3}, we have that $d_M$ is differentiable a.e. in $\R^d$ and for any point $x$ of differentiability it holds 
\begin{equation}
    \nabla d_M(x) = \frac{x-y}{\abs{x-y}}, \label{eq: gradient distance}  
\end{equation}
where $y \in M$ is any point of minimal distance between $x$ and $M$. We claim that $y$ is uniquely determined. Indeed, if there were $y_1, y_2 \in M$ minimizing the distance between $x$ and $M$, since $\abs{x-y_1}=\abs{x-y_2}$, it is clear that $y_1=y_2$ by \eqref{eq: gradient distance}. Thus $D$ is of full measure and the projection operator $\pi_M : D \to M$ is well defined at every point in $D$. Moreover, it is clear that $M \subset D$ and $\pi_M(x) = x$ for any $x \in M$. Lastly, letting $\{x_j\}_j$ be a sequence in $D$ converging to $x \in M$, by the minimality property of $\pi_M$ we get
\begin{equation}
    \lim_{j \to \infty}\abs{ \pi_M(x_j) - x } \leq \lim_{j \to \infty}\abs{\pi_M (x_j) - x_j} + \abs{x_j - x} \leq 2 \lim_{j \to \infty}\abs{x_j-x} = 0. 
\end{equation}
\end{proof}

\subsection{Rectifiable sets, Minkowski content, and Hausdorff measure} We define rectifiable sets according to \cite{Fed}*{Definition 3.2.14}. 

\begin{definition} \label{d: rectifiable sets}
We say that $M \subset \R^d$ is countably $m$-rectifiable if $M = \bigcup_{i \in \N} M_i$, where $M_i = f_i(E_i)$, $E_i \subset \R^m$ is a bounded Borel set and $f_i: \R^m \to \R^d$ is a Lipschitz map. 
\end{definition}

We recall the following property of the Minkowski content of a closed countably rectifiable set. Here $(M)_r$ denotes the open tubular neighborhood of radius $r>0$ (see \cref{S:Notation}).

\begin{proposition} [\cite{Fed}*{Theorem 3.2.39}] \label{p: minkowski federer}
Let $M \subset \R^d$ be a compact countably $m$-rectifiable set according to \cref{d: rectifiable sets}. Then 
\begin{equation}
    \lim_{r \to 0} \frac{\mathcal{H}^d((M)_r)}{\omega_{d-m} r^{d-m} } = \mathcal{H}^{m}(M). \label{eq: mink vs haus}
\end{equation}   
\end{proposition}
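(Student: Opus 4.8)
The plan is to prove \eqref{eq: mink vs haus} directly; it is the classical Federer tube formula, and the argument has three ingredients: a reduction to pieces of $C^1$ manifolds, the smooth tube estimate on a single piece, and an assembly argument handling the countable union. First I would invoke the structure theory of rectifiable sets to write $M = N \cup \bigcup_{j\ge 1}\Gamma_j$ with the $\Gamma_j$ pairwise disjoint, $\mathcal{H}^m(N)=0$, and each $\Gamma_j$ a compact subset of an embedded $m$-dimensional $C^1$ submanifold of $\R^d$: concretely, one first realizes $M$ (off an $\mathcal{H}^m$-null set) as a countable union of compact pieces of Lipschitz graphs over coordinate $m$-planes, and then uses the Lusin-type approximation of Lipschitz maps by $C^1$ maps to pass to the $C^1$ case at the price of enlarging $N$ by an $\mathcal{H}^m$-small set. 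Since $M$ is compact, $(M)_r\subset (M)_1$ for $r\le 1$, so everything happens in a fixed bounded set; it is also convenient to note that \eqref{eq: mink vs haus} is equivalent to testing the weak-$*$ convergence $\omega_{d-m}^{-1}r^{-(d-m)}\mathcal{L}^d\llcorner(M)_r\rightharpoonup \mathcal{H}^m\llcorner M$ against the constant $1$ (cf.\ \cref{p:char of weak convergence}), and that one may assume $\mathcal{H}^m(M)<\infty$, since otherwise the decomposition forces the left-hand side of \eqref{eq: mink vs haus} to diverge as well.

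For a single compact piece $\Gamma$ of an $m$-dimensional $C^1$ submanifold, I would use the tubular neighbourhood theorem: for $r$ small there is a $C^1$ diffeomorphism of the radius-$r$ normal disk bundle of $\Gamma$ onto a neighbourhood of $\Gamma$ that differs from $(\Gamma)_r$ only by a set concentrated within distance $r$ of $\partial\Gamma$, of $\mathcal{L}^d$-measure $O(r^{d-m+1})$, and whose Jacobian is $1+O(r)$ uniformly on $\Gamma$. The area formula then gives $\mathcal{L}^d((\Gamma)_r)=\omega_{d-m}\,r^{d-m}\,\mathcal{H}^m(\Gamma)\,(1+O(r))$, hence $\omega_{d-m}^{-1}r^{-(d-m)}\mathcal{L}^d((\Gamma)_r)\to\mathcal{H}^m(\Gamma)$; the Lipschitz-graph case follows after the $C^1$ reduction, the error going into the exceptional set.

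To assemble, for the upper bound I would use $(M)_r\subset (N)_r\cup\bigcup_j(\Gamma_j)_r$, bound the $N$-term by covering $N$ with small balls of arbitrarily small total $\mathcal{H}^m$-mass and estimating the tube around each ball, and bound the series by the single-piece estimate $\mathcal{L}^d((\Gamma_j)_r)\le C r^{d-m}\mathcal{H}^m(\Gamma_j)$ together with a truncation (keep finitely many $\Gamma_j$, absorb the rest into the negligible part) that legitimates passing to the limit. For the lower bound, I would fix $J$: the compact sets $\Gamma_1,\dots,\Gamma_J$ are pairwise disjoint, hence pairwise at positive distance, so for $r$ small $(\Gamma_1)_r,\dots,(\Gamma_J)_r$ are disjoint and contained in $(M)_r$, whence $\liminf_{r\to0}\omega_{d-m}^{-1}r^{-(d-m)}\mathcal{L}^d((M)_r)\ge\sum_{j\le J}\mathcal{H}^m(\Gamma_j)$, and letting $J\to\infty$ concludes.

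The hard part is not the smooth tube formula but the assembly: one must control simultaneously the overlaps among the infinitely many tubular neighbourhoods (so they do not inflate the upper bound) and the contribution of the $\mathcal{H}^m$-null set $N$ (so that it truly vanishes in the limit). This is where genuine work is needed — a bound on $\mathcal{L}^d((\Gamma_j)_r)$ uniform over a range of $r$ independent of $j$, and the fact that an $\mathcal{H}^m$-null subset of a rectifiable set has vanishing normalized $(d-m)$-dimensional tube volume — and it is exactly the content of the covering arguments behind \cite{Fed}*{Theorem 3.2.39}.
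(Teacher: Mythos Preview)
The paper does not give a proof of this proposition: it is stated with an explicit citation to \cite{Fed}*{Theorem 3.2.39} and no proof environment follows. So there is nothing in the paper to compare your attempt against.

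That said, your sketch is a recognisable outline of the standard argument and is broadly correct in spirit. A couple of remarks. First, Federer's own proof in 3.2.39 does not proceed via the tubular neighbourhood theorem for $C^1$ submanifolds; it is more integral-geometric, exploiting projections onto $m$-planes and the structure theorem for rectifiable sets, which avoids the $C^1$ reduction entirely. Your approach is closer to the treatment one finds in later expositions (e.g.\ Ambrosio--Fusco--Pallara, Theorem 2.104), and it works, but the $C^1$ Lusin-type approximation and the boundary-error estimate for $\partial\Gamma$ both need care to be made rigorous. Second, you correctly identify the genuinely delicate point: showing that an $\mathcal{H}^m$-null compact set has vanishing upper Minkowski content is not automatic and is where the rectifiability hypothesis (in the strong sense of \cref{d: rectifiable sets}, i.e.\ covered by Lipschitz images of bounded sets, not merely $\mathcal{H}^m$-a.e.) actually bites; a purely $\mathcal{H}^m$-null set in $\R^d$ can have positive or even infinite Minkowski content. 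Your proposal acknowledges this but does not resolve it, so as written it is an outline rather than a proof.
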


The left hand side of \eqref{eq: mink vs haus} is usually referred to as the \emph{Minkowski content} of $M$. We will mostly focus on the case $m=d-1$ and notice that $\omega_1=2$. We also recall the following characterization of the Hausdorff measure in terms of projections onto linear subspaces. 

\begin{proposition} [\cite{AFP00}*{Proposition 2.66}] \label{p: char of hausdorff}
Let $M \subset \R^d$ be a countably $m$-rectifiable set according to \cref{d: rectifiable sets}. Then 
$$\mathcal{H}^m(M) = \sup \left\{ \sum_{i=1}^n \mathcal{H}^m( \pi_{\Pi_i}(M_i) ) \,\colon\, \{\Pi_i\}_i \subset \mathbb{G}_m^d, \ M_i\subset M \text{ pairwise disjoint compact sets}\right\}. $$
\end{proposition}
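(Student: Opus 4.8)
The plan is to establish the two inequalities in the claimed identity separately. The inequality ``$\geq$'' is immediate: given an admissible finite family $\{\Pi_i\}_{i=1}^n\subset\mathbb{G}_m^d$ and pairwise disjoint compact sets $M_i\subset M$, each orthogonal projection $\pi_{\Pi_i}:\R^d\to\Pi_i$ is $1$-Lipschitz, so $\mathcal{H}^m(\pi_{\Pi_i}(M_i))\leq\mathcal{H}^m(M_i)$; since the $M_i$ are disjoint subsets of $M$, summing gives $\sum_{i}\mathcal{H}^m(\pi_{\Pi_i}(M_i))\leq\mathcal{H}^m\big(\bigcup_i M_i\big)\leq\mathcal{H}^m(M)$, and taking the supremum yields the bound.

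For the reverse inequality ``$\leq$'', which is the real content, I would first reduce to the case $\mathcal{H}^m(M)<\infty$ (if $\mathcal{H}^m(M)=\infty$, running the construction below on compact subsets of arbitrarily large finite measure shows the supremum is $+\infty$). By the structure theorem for countably $m$-rectifiable sets, write $M\subset Z\cup\bigcup_j\Sigma_j$ with $\mathcal{H}^m(Z)=0$ and each $\Sigma_j$ an embedded $C^1$ $m$-submanifold of $\R^d$; setting $M'_j:=(M\cap\Sigma_j)\setminus\bigcup_{l<j}\Sigma_l$ produces Borel, pairwise disjoint pieces covering $\mathcal{H}^m$-almost all of $M$. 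Fix $\eps>0$. Using the continuity of $x\mapsto T_x\Sigma_j$, cover each $\Sigma_j$ by countably many charts in which $\Sigma_j$ is a $C^1$ graph over a plane $\Pi\in\mathbb{G}_m^d$ with Lipschitz constant $<\eps$; disjointify these charts, intersect with $M'_j$, and approximate each resulting Borel set from inside by a compact subset (inner regularity of the finite Radon measure $\mathcal{H}^m\llcorner M$), arranging that the total mass discarded over all the countably many steps is $<\eps$. This yields pairwise disjoint compact sets $\{K_h\}_{h\in\N}$, each contained in $M$ and lying inside a Lipschitz graph of constant $<\eps$ over a plane $\Pi_h$, with $\sum_h\mathcal{H}^m(K_h)\geq\mathcal{H}^m(M)-\eps$.

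For each such $K_h$, realised as a subset of a graph $\{(x,f(x)):x\in A_h\}$ over $\Pi_h$ with $\mathrm{Lip}(f)<\eps$, one has $\pi_{\Pi_h}(K_h)=A_h$ and, by the area formula together with the bound $\sqrt{\det(I+\nabla f^\top\nabla f)}\leq(1+\eps^2)^{m/2}$, the estimate $\mathcal{H}^m(A_h)\geq(1+\eps^2)^{-m/2}\mathcal{H}^m(K_h)$. Choosing $N$ with $\sum_{h=1}^N\mathcal{H}^m(K_h)\geq\mathcal{H}^m(M)-2\eps$, the finite family $\{(\Pi_h,K_h)\}_{h=1}^N$ is admissible in the supremum and satisfies $\sum_{h=1}^N\mathcal{H}^m(\pi_{\Pi_h}(K_h))\geq(1+\eps^2)^{-m/2}(\mathcal{H}^m(M)-2\eps)$. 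Letting $\eps\to0$ concludes the proof.

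The main obstacle is the decomposition step: combining the rectifiable structure theorem with the continuity of tangent planes on $C^1$ charts to cut $M$ into pairwise disjoint \emph{compact} pieces that are simultaneously almost-flat graphs and capture almost all of $\mathcal{H}^m(M)$, while carefully bookkeeping the countably many small mass losses incurred by disjointification and inner approximation. The remaining ingredients — the $1$-Lipschitz bound for orthogonal projections and the area-formula estimate for graphs of small Lipschitz constant — are routine.
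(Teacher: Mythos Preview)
The paper does not prove this proposition; it merely quotes \cite{AFP00}*{Proposition 2.66} and uses the result as a black box in the proofs of \cref{p: weak convergence to hausdorff} and \cref{p: weak convergence to hausdorff one side}. Your argument is correct and is essentially the standard proof found in that reference: the inequality ``$\geq$'' is immediate from the $1$-Lipschitz property of orthogonal projections, while ``$\leq$'' follows by decomposing $M$ (via the rectifiable structure theorem and the continuity of tangents on $C^1$ charts) into pairwise disjoint compact pieces that are Lipschitz graphs of small constant over planes $\Pi_h\in\mathbb{G}_m^d$, approximating from inside by compacts, and applying the area formula.
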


Building on \cref{p: minkowski federer} and \cref{p: char of hausdorff}, we study the blow up of the Lebesgue measure around a closed $m$-rectifiable set. The proof of the following result is inspired by that of \cite{AFP00}*{Proposition 2.101}. 

\begin{proposition} \label{p: weak convergence to hausdorff}
Let $M$ be a compact countably $m$-rectifiable set in $\R^d$ according to \cref{d: rectifiable sets}. Assume that $\mathcal{H}^{m}(M)< +\infty$. It holds that $\displaystyle \frac{\mathcal{H}^d \llcorner (M)_r}{\omega_{d-m} r^{d-m} }  \rightharpoonup \mathcal{H}^{m} \llcorner M $ according to \cref{d: weak convergence of measures}.
\end{proposition}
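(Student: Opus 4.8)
The plan is to prove the asserted weak convergence of measures by testing against an arbitrary $\varphi \in C_c(\R^d)$, and showing that
\[
\lim_{r \to 0} \frac{1}{\omega_{d-m} r^{d-m}} \int_{(M)_r} \varphi(x) \, dx = \int_M \varphi \, d\mathcal{H}^m.
\]
Since $M$ is compact, all the tubular neighbourhoods $(M)_r$ for $r \le 1$ are contained in a fixed compact set $K$, so it suffices to prove the claim for $\varphi \in C_c(\R^d)$ and, moreover, by scaling and linearity, for $\varphi \ge 0$. The upper bound $\limsup_{r \to 0} \frac{1}{\omega_{d-m}r^{d-m}} \int_{(M)_r} \varphi \, dx \le \int_M \varphi \, d\mathcal{H}^m$ is the easier half: one fixes $\delta > 0$, covers $M$ by finitely many closed balls $\{B_j\}$ on each of which $\varphi \le \varphi(x_j) + \delta$ for some $x_j$, and applies \cref{p: minkowski federer} to the compact countably $m$-rectifiable sets $M \cap \overline{B_j}$ (a subset of $(M)_r$ sits inside a controlled union of $(M\cap \overline{B_j})_r$-type sets up to an error that vanishes), together with subadditivity of $\limsup$; the Minkowski content of the pieces is controlled by $\mathcal{H}^m(M \cap \overline{B_j})$, which sums to $\mathcal{H}^m(M)$. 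Care is needed because $(M)_r \ne \bigcup_j (M \cap \overline{B_j})_r$, but the discrepancy is contained in an $r$-neighbourhood of the ``seams'' and is handled by enlarging the balls slightly and using compactness; alternatively one partitions $M$ into small Borel pieces and uses that $(M)_r$ is covered by the union of the $r$-neighbourhoods of the pieces.

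The lower bound $\liminf_{r\to 0}\frac{1}{\omega_{d-m}r^{d-m}}\int_{(M)_r}\varphi\,dx \ge \int_M\varphi\,d\mathcal{H}^m$ is where \cref{p: char of hausdorff} enters, and this is the step I expect to be the main obstacle. Fix $\eps > 0$; by the characterization of $\mathcal{H}^m(M)$ via projections, choose finitely many pairwise disjoint compact sets $M_i \subset M$ and linear $m$-subspaces $\Pi_i$ with $\sum_i \mathcal{H}^m(\pi_{\Pi_i}(M_i)) \ge \mathcal{H}^m(M) - \eps$. On each piece, the $r$-neighbourhood $(M_i)_r$ contains, up to lower-order terms, the preimage under $\pi_{\Pi_i}$ of $\pi_{\Pi_i}(M_i)$ intersected with a cylinder of radius (comparable to) $r$ in the orthogonal $(d-m)$-directions, whose Lebesgue measure is at least $\omega_{d-m}(r - o(r))^{d-m}\,\mathcal{H}^m(\pi_{\Pi_i}(M_i))$ by Fubini; since the $M_i$ are disjoint compact sets, for $r$ small the corresponding neighbourhoods are disjoint, and combining with a covering of $M$ by small balls on which $\varphi$ is almost constant gives the lower bound after letting $\eps \to 0$ and then refining the ball-partition. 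This is exactly the scheme of \cite{AFP00}*{Proposition 2.101}, adapted from Hausdorff measure to the blown-up Lebesgue measure.

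Once both inequalities are established for nonnegative $\varphi \in C_c(\R^d)$, we conclude by linearity for all $\varphi \in C_c(\R^d)$, which is precisely \cref{d: weak convergence of measures}. I would streamline the write-up by first recording the scalar statement $\lim_{r\to 0}\frac{\mathcal{H}^d((M)_r \cap B)}{\omega_{d-m}r^{d-m}} = \mathcal{H}^m(M \cap B)$ for balls $B$ (or more generally for $M$ replaced by a compact rectifiable piece), which follows from \cref{p: minkowski federer} applied to $M \cap \overline B$ together with the observation that the contributions near $\partial B$ are negligible; this reduces the measure statement to a routine approximation of $\varphi$ by simple functions adapted to a fine ball cover of $K$. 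The genuinely delicate point throughout is controlling the mismatch between $(M)_r$ and the union of the $r$-neighbourhoods of a finite decomposition of $M$ — equivalently, the ``overcounting'' near points equidistant from two pieces — and the resolution is that this bad set has $\mathcal{H}^d$-measure $o(r^{d-m})$, which can be seen by a covering argument since the relevant region is contained in an $r$-neighbourhood of a lower-dimensional ``medial'' set of the decomposition, or simply absorbed by letting the decomposition be genuinely disjoint at scale $\gg r$.
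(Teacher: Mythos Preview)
Your approach is correct in outline but takes a longer route than the paper's. The paper observes that, by \cref{p: minkowski federer}, the total masses converge, and hence by \cref{p:char of weak convergence} it suffices to prove lower semicontinuity on open sets:
\[
\mathcal{H}^m(M\cap O)\le \liminf_{r\to 0}\frac{\mathcal{H}^d((M)_r\cap O)}{\omega_{d-m}r^{d-m}}\qquad\text{for every open }O.
\]
This single inequality is then obtained exactly via the projection characterization (\cref{p: char of hausdorff}) and the Fubini--Fatou argument you describe for your lower bound. In particular, the paper never proves an upper bound on $\int_{(M)_r}\varphi$ directly and never needs to decompose $M$ into small pieces on which $\varphi$ is almost constant; the Portmanteau-type reduction absorbs all of that.

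What this buys is precisely the elimination of the issue you flag as ``genuinely delicate'': the overlap between the $r$-neighbourhoods of pieces in a decomposition of $M$. Your sketch for the upper bound is workable---using a closed-cube grid (generically translated so that the faces meet $M$ in $\mathcal{H}^m$-null sets) gives compact pieces $M_j=M\cap\overline{Q_j}$ to which Federer's theorem applies, with $\sum_j\mathcal{H}^m(M_j)=\mathcal{H}^m(M)$---but your claim that the seam error is $o(r^{d-m})$ is not true in general for an arbitrary decomposition (two pieces sharing a set of positive $\mathcal{H}^m$-measure give overlap of order $r^{d-m}$), and the ball-cover version has a genuine overcounting problem. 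None of this is fatal, but it is extra work that the paper's route avoids entirely. Your lower bound, on the other hand, is essentially identical to the paper's argument, applied to $O=\R^d$; the paper simply runs it for a general open $O$ instead of interleaving it with a $\varphi$-partition.
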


\begin{proof} 
By \cref{p: minkowski federer} and \cref{p:char of weak convergence}, it is enough to prove that for any open set $O \subset \R^d$ it holds 
\begin{equation}
    \mathcal{H}^{m} (O \cap M) \leq \liminf_{r \to 0} \frac{\mathcal{H}^d(O \cap (M)_r)}{\omega_{d-m} r^{d-m}} =:\mathscr{M}^m_*(M; O).  
\end{equation}
 Moreover, using \cref{p: char of hausdorff}, it is enough to check that for any finite collection of pairwise disjoint compact sets $M_1, \dots, M_n\subset M\cap O$ and for any $\Pi_1, \dots, \Pi_n \in \mathbb{G}_m^d$ it holds that 
\begin{equation}
    \mathscr{M}^m_*(M; O) \geq \sum_{i=1}^n \mathcal{H}^{m}(\pi_i(M_i)), \qquad \text{with } \,\pi_i := \pi_{\Pi_i}.\label{eq: char of lower minkowski} 
\end{equation}
Since $M_1, \dots, M_n$ are compact and disjoint, it follows $\mathscr{M}^m_*(M; O) \geq\sum_{i=1}^n \mathscr{M}^m_*(M_i; O)$. Then, to prove \eqref{eq: char of lower minkowski} it suffices  to check that 
\begin{equation}
    \mathscr{M}^m_*(M_i; O) \geq \mathcal{H}^{m}(\pi_i(M_i))\qquad \forall i= 1,\dots, n. \label{eq: projection vs hausdorff}
\end{equation}
Given any $i \in \{1, \dots, n\}$, by Fubini's theorem and Fatou's lemma we compute 
\begin{align}
    \mathscr{M}^m_*(M_i; O) & = \liminf_{r \to 0} \frac{\mathcal{H}^d ((M_i)_r \cap O) }{\omega_{d-m} r^{d-m}}  
    \\ & = \liminf_{r \to 0} \int_{\Pi_i} \frac{\mathcal{H}^{d-m} \left( (M_i)_r \cap O \cap \pi_i^{-1}(x) \right)}{\omega_{d-m} r^{d-m} } \, d\mathcal{H}^{m}(x) 
    \\ & \geq \liminf_{r \to 0} \int_{\pi_i(M_i)}  \frac{\mathcal{H}^{d-m} \left( (M_i)_r \cap O \cap \pi_i^{-1}(x) \right)}{\omega_{d-m} r^{d-m}} \, d\mathcal{H}^{m}(x)
    \\ & \geq \int_{\pi_i (M_i) } \liminf_{r \to 0} \frac{\mathcal{H}^{d-m} \left( (M_i)_r \cap O \cap \pi_i^{-1}(x) \right)}{\omega_{d-m} r^{d-m}} \, d\mathcal{H}^{m}(x). 
\end{align}
Moreover, for any $x \in \pi_i(M_i)$ there exists $p_x \in M_i$ such that $\pi_i(p_x)=x$, and so $B_r(p_x) \subset (M_i)_r $. Since $M_i \subset O$ and $O$ is an open set, if $r$ is small enough (possibly depending on $x$), we get $B_r(p_x) \subset (M_i)_r \cap O$. Thus, for any $x \in \pi_i(M_i)$ it holds 
$$\pi_i^{-1}(x) \cap B_r(p_x) \subset \pi_i^{-1}(x) \cap (M_i)_r \cap O, $$
i.e. $\pi_i^{-1}(x) \cap (M_i)_r \cap O$ contains a $(d-m)$-dimensional ball of radius $r$, provided that $r$ is small enough (see \cref{fig:proj}). Hence, we conclude
$$\liminf_{r \to 0} \frac{\mathcal{H}^{d-m} \left( (M_i)_r \cap O \cap \pi_i^{-1}(x) \right)}{\omega_{d-m} r^{d-m}} \geq  1 \qquad \forall x \in \pi_i(M_i), $$
thus proving \eqref{eq: projection vs hausdorff}.
\begin{figure}
\includegraphics[width=0.6\textwidth]{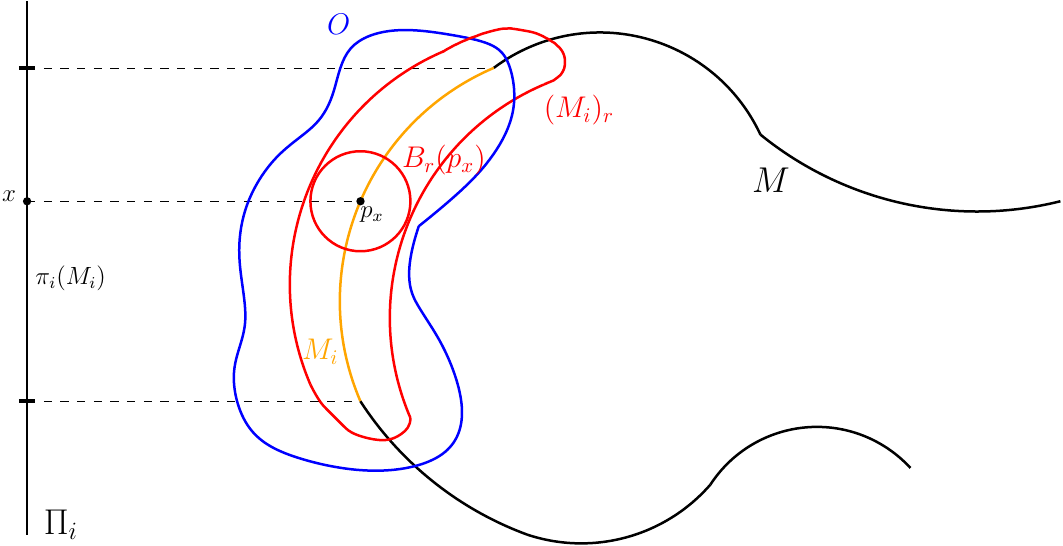}
\caption{Proof of \cref{p: weak convergence to hausdorff} in the case $d=2$ and $m=1$.}
\label{fig:proj}
\end{figure}
\end{proof}

\subsection{Lipschitz sets and one-sided Minkowski contents} We start by recalling a basic property of Lipschitz sets.  

\begin{lemma} \label{l: lipschitz set vs boundary cone property}
Let $\Omega \subset \R^d$ be a bounded open set with Lipschitz boundary. Then, for $\mathcal{H}^{d-1}$ almost every $x \in \partial \Omega$ there exists a unit vector $n_x$ such that for any $\alpha \in (0, \sfrac{\pi}{2})$ there exists $r_\alpha >0$ such that for any $r \in (0, r_\alpha)$ it holds that
\begin{equation}
    \{ y \in B_r(x)\, \colon \,\langle y-x, -n_x \rangle >\cos(\alpha) \abs{y-x} \} \subset \Omega, \label{eq: inside cone}
\end{equation}
\begin{equation}
    \{ y \in B_r(x)\, \colon \,\langle y-x, n_x \rangle > \cos(\alpha) \abs{y-x} \} \subset \R^d \setminus \overline{\Omega}. \label{eq: outside cone} 
\end{equation}
\end{lemma}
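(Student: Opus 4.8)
The plan is to reduce the statement to a local graph description of $\partial\Omega$, invoke Rademacher's theorem on the defining Lipschitz function, and then carry out an elementary cone computation at each point where the graph is differentiable. Since $\partial\Omega$ is compact, I first cover it by finitely many open sets $U_1,\dots,U_N$ so that, after a rotation depending on the chart, $\Omega\cap U_j=\{(y',y_d)\colon y_d>\gamma_j(y')\}\cap U_j$, $\partial\Omega\cap U_j=\{(y',y_d)\colon y_d=\gamma_j(y')\}\cap U_j$ and $(\R^d\setminus\overline\Omega)\cap U_j=\{(y',y_d)\colon y_d<\gamma_j(y')\}\cap U_j$ for some Lipschitz $\gamma_j\colon\R^{d-1}\to\R$. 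A finite union of $\mathcal H^{d-1}$-null sets is $\mathcal H^{d-1}$-null, so it suffices to prove the conclusion for $\mathcal H^{d-1}$-a.e.\ $x$ in a single chart; I fix it and drop the index.

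By Rademacher's theorem the set $N\subset\R^{d-1}$ of points where $\gamma$ is not differentiable has $\mathcal L^{d-1}(N)=0$. The graph parametrization $\Phi(y'):=(y',\gamma(y'))$ is Lipschitz, and a Lipschitz map sends $\mathcal L^{d-1}$-null sets to $\mathcal H^{d-1}$-null sets, so $\mathcal H^{d-1}(\Phi(N))=0$. Since $\partial\Omega\cap U=\Phi(W)$ for the open set $W=\{y'\colon\Phi(y')\in U\}$, it follows that $\mathcal H^{d-1}$-a.e.\ point of $\partial\Omega$ in this chart is of the form $x=(x',\gamma(x'))$ with $\gamma$ differentiable at $x'$. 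For such $x$, set $a:=\nabla\gamma(x')$ and define $n_x:=(1+\abs{a}^2)^{-1/2}(a,-1)$, which is the outward unit normal.

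For the cone estimate, write a generic point as $y=x+(v',s)$ with $v'\in\R^{d-1}$, $s\in\R$, and note that for $y\in U$ one has $y\in\Omega\iff s>\gamma(x'+v')-\gamma(x')$ and $y\in\R^d\setminus\overline\Omega\iff s<\gamma(x'+v')-\gamma(x')$. Differentiability at $x'$ gives $\gamma(x'+v')-\gamma(x')=a\cdot v'+R(v')$ with, for any prescribed $\eps>0$, $\abs{R(v')}\le\eps\abs{v'}$ once $\abs{v'}$ is small. Since $\langle y-x,-n_x\rangle=(1+\abs{a}^2)^{-1/2}(s-a\cdot v')$ and $\abs{y-x}\ge\abs{v'}$, membership of $y$ in the set on the left of \eqref{eq: inside cone} forces $s-a\cdot v'>\cos(\alpha)\sqrt{1+\abs{a}^2}\,\abs{v'}$. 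Choosing $\eps:=\tfrac12\cos(\alpha)\sqrt{1+\abs{a}^2}$ and then $r_\alpha>0$ so small that $B_{r_\alpha}(x)\subset U$ and $\abs{R(v')}\le\eps\abs{v'}$ whenever $\abs{v'}<r_\alpha$, one gets $\gamma(x'+v')-\gamma(x')\le a\cdot v'+\eps\abs{v'}<s$, i.e.\ $y\in\Omega$, which is \eqref{eq: inside cone}. The inclusion \eqref{eq: outside cone} is entirely symmetric: using $\langle y-x,n_x\rangle=(1+\abs{a}^2)^{-1/2}(a\cdot v'-s)$ one obtains $a\cdot v'-s>2\eps\abs{v'}$, hence $\gamma(x'+v')-\gamma(x')\ge a\cdot v'-\eps\abs{v'}>s$, so $y\in\R^d\setminus\overline\Omega$.

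The cone computation in the last step is routine once the correct normal vector and the choice of $\eps$ are in place. The point that genuinely requires care is the measure-theoretic reduction in the second paragraph: one must make sure that discarding the non-differentiability set of each chart function, as well as the overlaps of the finite cover, removes only an $\mathcal H^{d-1}$-null subset of $\partial\Omega$. This rests precisely on the fact that the graph parametrizations are Lipschitz, so that Lebesgue-null sets in $\R^{d-1}$ are mapped to $\mathcal H^{d-1}$-null subsets of $\partial\Omega$.
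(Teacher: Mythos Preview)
The paper states this lemma without proof, introducing it as ``a basic property of Lipschitz sets'' and treating it as standard background. Your argument supplies exactly the expected proof: pass to a local Lipschitz graph chart, invoke Rademacher to get a.e.\ differentiability of the defining function $\gamma$, push the null set forward to $\partial\Omega$ via the Lipschitz parametrization, and then verify the two cone inclusions by a first-order Taylor estimate on $\gamma$. The computation is correct: with $n_x=(1+\abs{a}^2)^{-1/2}(a,-1)$ and the choice $\eps=\tfrac12\cos(\alpha)\sqrt{1+\abs{a}^2}$, the inequalities $s-a\cdot v'>2\eps\abs{v'}$ and $a\cdot v'-s>2\eps\abs{v'}$ respectively force $s>\gamma(x'+v')-\gamma(x')$ and $s<\gamma(x'+v')-\gamma(x')$ once $\abs{v'}<r_\alpha$, which is guaranteed since $\abs{v'}\le\abs{y-x}<r_\alpha$. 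Your care in also requiring $B_{r_\alpha}(x)\subset U$ so that the graph description remains valid is the only other point that needs attention, and you handle it. The proof is complete and is precisely the standard one the paper has in mind.
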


\begin{remark}
Given an open set $\Omega$ with Lipschitz boundary, \cref{l: lipschitz set vs boundary cone property} establishes the existence, $\mathcal{H}^{d-1}\llcorner\partial\Omega$ almost everywhere, of a unit vector $n_x$ such that for any $\alpha \in (0, \sfrac{\pi}{2})$ the cones of angle $\alpha$ around $-n_x$ and $n_x$ are contained in $\Omega$ and $\R^d \setminus \overline{\Omega}$ for small radii, respectively. Moreover, the vector $n_x$ is unique at any point at which it is defined and it plays the role of an outer unit normal vector.       
\end{remark}

Following the lines of the proof of \cref{p: weak convergence to hausdorff}, we establish the following result. 

\begin{proposition} \label{p: weak convergence to hausdorff one side}
Let $\Omega \subset \R^d$ be a bounded  open  set with Lipschitz boundary and $\Sigma \subset \partial \Omega$ Borel. Then, for any $O\subset \R^d$ open it holds
    \begin{equation}
        \label{int mink content lower semicont}
        \liminf_{r\rightarrow 0 } \frac{\mathcal{H}^d\left( (\Sigma)_r^{\rm in} \cap O\right)}{r}\geq \mathcal{H}^{d-1}(\Sigma\cap O)
    \end{equation}
    and
    \begin{equation}
        \label{ext mink content lower semicont}
        \liminf_{r\rightarrow 0 } \frac{\mathcal{H}^d \left((\Sigma)_r^{\rm out} \cap O\right)}{r}\geq \mathcal{H}^{d-1}(\Sigma\cap O).
    \end{equation}
\end{proposition}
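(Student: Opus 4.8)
The plan is to adapt the argument of \cref{p: weak convergence to hausdorff}, replacing full tubular neighbourhoods by one-sided ones, and using \cref{l: lipschitz set vs boundary cone property} to guarantee that enough mass is captured on the correct side of $\partial\Omega$. By inner regularity of $\mathcal{H}^{d-1}\llcorner\Sigma$, it suffices to prove that for any finite collection of pairwise disjoint compact sets $M_1,\dots,M_n\subset\Sigma\cap O$ and any $\Pi_1,\dots,\Pi_n\in\mathbb{G}_{d-1}^d$ one has
\[
\liminf_{r\to0}\frac{\mathcal{H}^d\left((\Sigma)_r^{\rm in}\cap O\right)}{r}\geq\sum_{i=1}^n\mathcal{H}^{d-1}(\pi_i(M_i)),\qquad \pi_i:=\pi_{\Pi_i},
\]
and similarly for the exterior neighbourhood. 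Since the $M_i$ are compact and disjoint, we may pick $\delta>0$ with $\dist(M_i,M_j)>2\delta$ for $i\neq j$, so that $(M_i)_r$ are disjoint for $r<\delta$ and $(\Sigma)_r^{\rm in}\cap O\supset\bigcup_i\big((M_i)_r\cap\Omega\cap O\big)$; hence it is enough to show $\liminf_r r^{-1}\mathcal{H}^d((M_i)_r\cap\Omega\cap O)\geq\mathcal{H}^{d-1}(\pi_i(M_i))$ for each $i$.

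Fixing $i$ and writing $\Pi=\Pi_i$, $\pi=\pi_i$, by Fubini along the fibres $\pi^{-1}(x)$, $x\in\Pi$, and Fatou's lemma (exactly as in \cref{p: weak convergence to hausdorff}) we reduce to proving that for $\mathcal{H}^{d-1}$-a.e. $x\in\pi(M_i)$,
\[
\liminf_{r\to0}\frac{\mathcal{H}^1\left((M_i)_r\cap\Omega\cap O\cap\pi^{-1}(x)\right)}{r}\geq 1.
\]
Pick $p_x\in M_i$ with $\pi(p_x)=x$. The new ingredient compared with \cref{p: weak convergence to hausdorff} is that $B_r(p_x)\cap\Omega$ is only \emph{half} a ball near $\partial\Omega$, so one cannot directly say the fibre contains a full segment of length $2r$. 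Instead, apply \cref{l: lipschitz set vs boundary cone property} at $p_x$ (valid for $\mathcal{H}^{d-1}$-a.e. $p_x$, hence for $\mathcal{H}^{d-1}$-a.e. $x\in\pi(M_i)$ since $\pi$ restricted to the rectifiable set $M_i$ maps null sets to null sets up to the area formula — more precisely, the exceptional set has $\mathcal{H}^{d-1}$-null image under $\pi$). This gives a unit vector $n_{p_x}$ and, for each $\alpha\in(0,\pi/2)$, a radius $r_\alpha$ such that the open cone $C_\alpha^-:=\{y\in B_r(p_x):\langle y-p_x,-n_{p_x}\rangle>\cos(\alpha)|y-p_x|\}$ is contained in $\Omega$ for $r<r_\alpha$; since $M_i\subset O$ and $O$ is open, we may also assume $C_\alpha^-\subset O$ for $r$ small. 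The segment $\pi^{-1}(x)\cap C_\alpha^-$ is nonempty precisely when the fibre direction is not orthogonal to $n_{p_x}$; if $\theta$ denotes the angle between $\Pi^\perp$ (the fibre direction) and $n_{p_x}$, then for $\alpha>\theta$ the fibre through $p_x$ meets the cone in a segment whose length is comparable to $r$, with a constant depending only on $\alpha$ and $\theta$. Letting $\alpha\uparrow\pi/2$ forces the comparison constant to $1$ (the half-line from $p_x$ in the fibre direction into $\Omega$ is contained in the limiting half-space), yielding $\liminf_r r^{-1}\mathcal{H}^1(\pi^{-1}(x)\cap\Omega\cap O)\geq1$, as desired. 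The exterior case \eqref{ext mink content lower semicont} is identical, using the cone $C_\alpha^+\subset\R^d\setminus\overline\Omega$ from \eqref{eq: outside cone}.

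The main obstacle is the last geometric point: unlike in \cref{p: weak convergence to hausdorff}, where a full $(d-m)$-ball of radius $r$ sits inside the fibre slice, here only a one-sided cone is available, and one must check that taking $\alpha\to\pi/2$ recovers the sharp constant $1$ rather than some $\cos$-factor less than one. The clean way to handle this is to observe that for any fixed direction $e\in\Pi^\perp$ (unit) with $\langle e,-n_{p_x}\rangle>0$, the ray $\{p_x+te:t\in(0,\varepsilon)\}$ lies in $\Omega\cap O$ for $\varepsilon$ small (it eventually enters every cone $C_\alpha^-$ as $\alpha\to\pi/2$), so the fibre slice contains a segment of length $\varepsilon$ through $p_x$ into $\Omega$; the case $\langle e,-n_{p_x}\rangle\leq0$ is symmetric via $-e$, and the measure-zero set where $e\perp n_{p_x}$ can be discarded since it forces $x$ into an $\mathcal{H}^{d-1}$-null subset of $\Pi$ (the fibres tangent to $\partial\Omega$). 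One also needs the routine measurability of $r\mapsto\mathcal{H}^1((M_i)_r\cap\Omega\cap O\cap\pi^{-1}(x))$ in $x$ to justify Fubini, which follows as in \cref{p: weak convergence to hausdorff}.
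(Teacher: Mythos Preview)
Your approach is essentially the same as the paper's: reduce via \cref{p: char of hausdorff} to disjoint compact pieces $M_i$ and hyperplanes $\Pi_i$, apply Fubini and Fatou along the fibres of $\pi_i$, and use the cone property of \cref{l: lipschitz set vs boundary cone property} to find a segment of length $r$ in each fibre slice.

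Two points need tightening. First, the degenerate case where the fibre direction $e$ is orthogonal to $n_{p_x}$ is not a ``measure-zero set'' in $M_i$: the set $B_i=\{p\in M_i: n_p\in\Pi_i\}$ can easily have positive $\mathcal{H}^{d-1}$ measure (take a flat portion of $\partial\Omega$). What one must show is that $\mathcal{H}^{d-1}(\pi_i(B_i))=0$, and this is the one nontrivial step: the paper does it via the area formula with the tangential differential, observing that on $B_i$ the tangent plane to $\partial\Omega$ contains the fibre direction, so $J_{d-1}^{B_i}\pi_i\equiv 0$. Your parenthetical remarks gesture at this but do not carry it out; also note that you should first define the ``good'' set $S_i=\{x\in\pi_i(M_i):\exists\,p\in M_i\cap\pi_i^{-1}(x)\text{ with }n_p\text{ defined and }n_p\notin\Pi_i\}$ and prove $\pi_i(M_i)\setminus S_i$ is null, rather than fixing a single $p_x$ in advance.

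Second, the limiting argument $\alpha\uparrow\pi/2$ is unnecessary. If $\langle e,-n_{p_x}\rangle>0$, pick any $\alpha\in(0,\pi/2)$ with $\cos\alpha<\langle e,-n_{p_x}\rangle$; then the entire ray $\{p_x+te:0<t<r\}$ lies in the cone $C_\alpha^-\subset\Omega$ for every $r<r_\alpha$, giving directly $\mathcal{H}^1\big((M_i)_r^{\rm in}\cap O\cap\pi_i^{-1}(x)\big)\geq r$ and hence the sharp constant $1$ with no limit in $\alpha$. Your final paragraph essentially arrives at this, but the preceding discussion of a ``comparison constant'' tending to $1$ is a detour.
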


\begin{proof}
Fix an open set $O \subset \R^d$.  We check the validity of \eqref{int mink content lower semicont} by following the proof of \cref{p: weak convergence to hausdorff}. The proof of \eqref{ext mink content lower semicont} is analogous and thus left to the reader. By \cref{p: char of hausdorff}, it is enough to check that for any finite collection of pairwise disjoint compact sets $M_1, \dots, M_n\subset \Sigma \cap O$ and for any $\Pi_1, \dots, \Pi_n \in \mathbb{G}_{d-1}^d$ it holds that 
\begin{equation}
    \liminf_{r\to 0} \frac{\mathcal{H}^{d} \left( (\Sigma)_r^{\rm in} \cap O \right) }{r} \geq \sum_{i=1}^n \mathcal{H}^{d-1}(\pi_i(M_i)), 
\end{equation}
where we set $\pi_i = \pi_{\Pi_i}$. Since $M_1, \dots, M_n$ are compact and disjoint, it is easy to check that 
\begin{equation}
    \liminf_{r \to 0} \frac{\mathcal{H}^{d} \left( (\Sigma)_r^{\rm in} \cap O \right) }{r} \geq \sum_{i=1}^n \liminf_{r \to 0} \frac{\mathcal{H}^{d} \left( (M_i)_r^{\rm in}  \cap O \right) }{r}. 
\end{equation} 
Then, to prove \eqref{int mink content lower semicont} it suffices to check that 
\begin{equation}
    \liminf_{r \to 0} \frac{\mathcal{H}^{d} \left( (M_i)_r^{\rm in}  \cap O \right) }{r} \geq \mathcal{H}^{d-1}(\pi_i(M_i))\qquad \forall i= 1,\dots, n. 
\end{equation}
Given any $i \in \{1, \dots, n\}$, by Fubini's theorem and Fatou's lemma we get
\begin{align}
    \liminf_{r \to 0} \frac{\mathcal{H}^{d} \left( (M_i)_r^{\rm in}  \cap O \right) }{r} & = \liminf_{r \to 0} \int_{\Pi_i} \frac{\mathcal{H}^{1} \left( (M_i)_r^{\rm in} \cap O \cap \pi_i^{-1}(x) \right)}{ r } \, d\mathcal{H}^{d-1} (x) 
    \\ & \geq \liminf_{r \to 0} \int_{\pi_i(M_i)}  \frac{\mathcal{H}^{1} \left( (M_i)_r^{\rm in} \cap O \cap \pi_i^{-1}(x) \right)}{r} \, d\mathcal{H}^{d-1}(x)
    \\ & \geq \int_{\pi_i(M_i) } \liminf_{r \to 0} \frac{\mathcal{H}^{1} \left( (M_i)_r^{\rm in} \cap O \cap \pi_i^{-1}(x) \right)}{r} \, d\mathcal{H}^{d-1}(x).
\end{align}
To conclude, it is enough to prove that for $\mathcal{H}^{d-1}$ almost every $x \in \pi_i(M_i)$ it holds that 
\begin{equation}
    \liminf_{r \to 0} \frac{\mathcal{H}^{d-1} \left( (M_i)_r^{\rm in} \cap O \cap \pi_i^{-1}(x) \right)}{r} \geq  1. \label{eq: half fiber}
\end{equation}
With the notation of \cref{l: lipschitz set vs boundary cone property}, we set
\begin{equation}
    S_i := \left\{ x \in \pi_i(M_i) \,\colon\, \exists p_x\in M_i \text{ s.t. $n_{p_x}$ is well defined and } n_{p_x} \notin \Pi_i \right\}. \label{eq: S_i}  
\end{equation}
We claim that $\mathcal{H}^{d-1}\left( \pi_i(M_i) \cap S_i^c\right) = 0$ and that \eqref{eq: half fiber} is satisfied for any $x \in S_i$.
To begin, we notice that  $\pi_i(M_i) \cap S_i^c \subset \pi_i( A_i) \cup \pi_i( B_i)$, with
$$
A_i = \{ y \in M_i \,\colon\, n_y \text{ is not defined} \} \qquad \text{and} \qquad B_i = \{ y \in M_i\, \colon\, n_y \text{ is defined and } n_y \in \Pi_i \}. 
$$
Since $\pi_i$ is $1$-Lipschitz and $n_y$ is defined for $\mathcal{H}^{d-1}$-a.e. $y \in \partial \Omega$, we infer that $\mathcal{H}^{d-1}(\pi_i(A_i)) \leq \mathcal{H}^{d-1}(A_i)=0$.
Next, we prove that $\mathcal{H}^{d-1}(\pi_i (B_i)) = 0$. Recalling that $B_i$ is $(d-1)$-rectifiable, by the area formula with the tangential differential \cite{AFP00}*{Theorem 2.91}, we have  
\begin{equation}
    \int_{\Pi_i} \mathcal{H}^0 \left(B_i \cap \pi_i^{-1}(y)\right) \, d\mathcal{H}^{d-1}(y) = \int_{B_i} J_{d-1}^{B_i} \pi_i(y) \, d \mathcal{H}^{d-1}(y).  
\end{equation}
Here $J_{d-1}^{B_i} \pi_i(y)$ is the determinant of the differential of the restriction of $\pi_i$ to $y + \text{Tan}(y; B_i)$, computed at $y$. We notice that 
$$\mathcal{H}^0 \left(B_i \cap \pi_i^{-1}(y)\right) \geq \mathds{1}_{\pi_i(B_i)}(y) \qquad \forall y \in \Pi_i, $$
thus proving 
\begin{equation}
    \mathcal{H}^{d-1}(\pi_i(B_i)) \leq \int_{B_i} J_{d-1}^{B_i} \pi_i(y) \, d \mathcal{H}^{d-1}(y). 
\end{equation}
Moreover, for any $y \in B_i$, $\pi_i$ is constant along any line contained in the tangent space to $\partial \Omega$ at $y$. Thus, the determinant of the tangential Jacobian at $y$ vanishes. Therefore, we deduce
$$\int_{B_i} J_{d-1}^{B_i} \pi_i(y) \, d \mathcal{H}^{d-1}(y) = 0,$$
yielding $\mathcal{H}^{d-1}(\pi_i(B_i)) = 0$. 
To conclude, pick any $x \in S_i$. We check that \eqref{eq: half fiber} is satisfied at $x$. Let $v_i$ be a unit vector such that $\Pi_i^{\perp} = \text{Span}(v_i)$. Since $x\in S_i$ we can find $p\in M_i$ such that $\pi_i(p)=x$ and $n_{p} \notin \Pi_i$. Without loss of generality we can assume that $\langle n_{p}, v_i \rangle >0$. Then, we can find an angle $\alpha_{p} \in (0, \sfrac{\pi}{2})$ such that $0 < \cos(\alpha_{p}) < \langle n_{p}, v_i \rangle$.
Letting $r_{\alpha_{p}}$ as in \cref{l: lipschitz set vs boundary cone property}, it is clear that for any $r < r_{\alpha_p}$ the segment between $p$ and $p+ r v_i$ is contained in $ (M_i)^{\rm in}_r \cap \pi_i^{-1}(x)$. Since $O$ is an open set, the segment is also contained in $O$, possibly choosing a smaller  $r_{\alpha_p}$. This proves \eqref{eq: half fiber} at $x$. 
\end{proof}

Now, let us restrict to $\Sigma\subset \partial\Omega$ closed. We remark that by \cite{ACA08}*{Corollary 1} (see also the more general statement \cite{ACA08}*{Theorem 5}) we also have the convergence of the total masses of the two sequences of measures defined as
$$
\frac{\mathcal{H}^d\left((\Sigma)_r^{\rm in}\cap A\right)}{r} \qquad \text{and} \qquad \frac{\mathcal{H}^d\left((\Sigma)_r^{\rm out}\cap A\right)}{r} \qquad \qquad \forall A\subset \R^d \text{ Borel}.
$$
Thus, with \cref{p: weak convergence to hausdorff one side} in hand, by \cref{p:char of weak convergence} we could directly conclude the weak convergence of the one-sided Minkowski contents as measures concentrated on $\Sigma$.  However, in order to keep this note self-contained, the next corollary gives an independent and elementary proof of this fact.

\begin{corollary}\label{C:conv one side mink}
Let $\Omega \subset \R^d$ be a bounded open set with Lipschitz boundary and $\Sigma\subset \partial \Omega$ closed. Then, as $r\rightarrow 0$, it holds that 
$$
\frac{\mathcal{H}^d\llcorner(\Sigma)_r^{\rm in}}{r}\rightharpoonup \mathcal{H}^{d-1}\llcorner \Sigma \qquad \text{and} \qquad \frac{\mathcal{H}^d\llcorner(\Sigma)_r^{\rm out}}{r}\rightharpoonup \mathcal{H}^{d-1}\llcorner \Sigma.
$$
\end{corollary}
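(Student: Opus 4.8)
The plan is to deduce the corollary from three ingredients already at hand: the one-sided lower-semicontinuity bounds of \cref{p: weak convergence to hausdorff one side}, the two-sided Minkowski content formula of \cref{p: minkowski federer}, and the characterisation of weak convergence in \cref{p:char of weak convergence}. Throughout write $\mu^{\rm in}_r:=\mathcal{H}^d\llcorner(\Sigma)_r^{\rm in}/r$, $\mu^{\rm out}_r:=\mathcal{H}^d\llcorner(\Sigma)_r^{\rm out}/r$ and $\mu:=\mathcal{H}^{d-1}\llcorner\Sigma$. First I would record the relevant structure of $\Sigma$: since $\Omega$ is bounded, $\partial\Omega$ is compact and hence so is $\Sigma$; since $\partial\Omega$ is Lipschitz it is, up to a finite covering by Lipschitz graphs, countably $(d-1)$-rectifiable in the sense of \cref{d: rectifiable sets} with $\mathcal{H}^{d-1}(\partial\Omega)<\infty$; and a closed subset of a rectifiable set is rectifiable, because $\Sigma\cap f_i(E_i)=f_i\big(E_i\cap f_i^{-1}(\Sigma)\big)$ with $f_i^{-1}(\Sigma)$ closed. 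Thus $\Sigma$ is a compact countably $(d-1)$-rectifiable set with $\mathcal{H}^{d-1}(\Sigma)<\infty$, so all the quoted results apply to it and $\mu$ is a finite measure.

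The key step is the convergence of the total masses. Applying \cref{p: minkowski federer} to $M=\Sigma$ with $m=d-1$ (so $\omega_1=2$) gives $\mathcal{H}^d((\Sigma)_r)/(2r)\to\mathcal{H}^{d-1}(\Sigma)$ as $r\to 0$. Since $\R^d=\Omega\sqcup\partial\Omega\sqcup(\R^d\setminus\overline\Omega)$ and $\mathcal{H}^d(\partial\Omega)=0$, we have $\mathcal{H}^d((\Sigma)_r)=\mathcal{H}^d((\Sigma)_r^{\rm in})+\mathcal{H}^d((\Sigma)_r^{\rm out})$, whence
\[
\mu^{\rm in}_r(\R^d)+\mu^{\rm out}_r(\R^d)\ \xrightarrow[r\to 0]{}\ 2\,\mathcal{H}^{d-1}(\Sigma).
\]
On the other hand, \cref{p: weak convergence to hausdorff one side} with $O=\R^d$ gives $\liminf_{r\to 0}\mu^{\rm in}_r(\R^d)\ge\mathcal{H}^{d-1}(\Sigma)$ and $\liminf_{r\to 0}\mu^{\rm out}_r(\R^d)\ge\mathcal{H}^{d-1}(\Sigma)$. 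Writing $\mu^{\rm in}_r(\R^d)=\big(\mu^{\rm in}_r+\mu^{\rm out}_r\big)(\R^d)-\mu^{\rm out}_r(\R^d)$ and taking $\limsup$, the convergence of the first term yields $\limsup_{r\to 0}\mu^{\rm in}_r(\R^d)\le 2\,\mathcal{H}^{d-1}(\Sigma)-\liminf_{r\to 0}\mu^{\rm out}_r(\R^d)\le\mathcal{H}^{d-1}(\Sigma)$; combined with the lower bound, this forces $\mu^{\rm in}_r(\R^d)\to\mathcal{H}^{d-1}(\Sigma)=\mu(\R^d)$, and symmetrically $\mu^{\rm out}_r(\R^d)\to\mu(\R^d)$.

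It remains to upgrade these facts to weak convergence. Fix any sequence $r_k\to 0$. Then $\mu^{\rm in}_{r_k}$ and $\mu$ are finite measures on $\R^d$ with $\mu^{\rm in}_{r_k}(\R^d)\to\mu(\R^d)$, and for every open $O\subset\R^d$ \cref{p: weak convergence to hausdorff one side} gives $\liminf_k\mu^{\rm in}_{r_k}(O)\ge\mathcal{H}^{d-1}(\Sigma\cap O)=\mu(O)$; hence \cref{p:char of weak convergence} applies and $\mu^{\rm in}_{r_k}\rightharpoonup\mu$. Since the sequence $r_k\to 0$ was arbitrary, $\mu^{\rm in}_r\rightharpoonup\mu$ as $r\to 0$ in the sense of \cref{d: weak convergence of measures}, and the identical argument (using the exterior bound in \cref{p: weak convergence to hausdorff one side}) gives $\mu^{\rm out}_r\rightharpoonup\mu$. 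I expect the only non-routine point to be the $\limsup$/$\liminf$ squeeze that pins down the two individual total masses from their sum and their separate lower bounds; once that is in place, the conclusion is a direct invocation of \cref{p:char of weak convergence}.
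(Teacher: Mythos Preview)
Your proof is correct and follows essentially the same approach as the paper: both split $\mathcal{H}^d((\Sigma)_r)=\mathcal{H}^d((\Sigma)_r^{\rm in})+\mathcal{H}^d((\Sigma)_r^{\rm out})$, invoke \cref{p: minkowski federer} for the sum and \cref{p: weak convergence to hausdorff one side} with $O=\R^d$ for the two one-sided $\liminf$ bounds, squeeze to obtain convergence of the total masses, and then apply \cref{p:char of weak convergence}. Your write-up is in fact slightly more careful than the paper's, since you explicitly verify that $\Sigma$ is compact and countably $(d-1)$-rectifiable (so that \cref{p: minkowski federer} applies) and you carry out the $\limsup$/$\liminf$ squeeze directly, whereas the paper leaves the passage from $\liminf=\mathcal{H}^{d-1}(\Sigma)$ to the full limit somewhat implicit.
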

\begin{proof}
We want to apply \cref{p:char of weak convergence}. Thanks to \cref{p: weak convergence to hausdorff one side} we already have that both sequences of measures are lower semicontinuous on open sets. Thus, it suffices to check that their masses converge to $\mathcal{H}^{d-1}(\Sigma)$. 
We split 
$$
\frac{\mathcal H^d \left((\Sigma)_{r}\right)}{2{r}}= \frac{1}{2} \left(\frac{\mathcal H^d \left((\Sigma)_{r}^{\text{in}}\right)}{{r}} + \frac{\mathcal H^d \left((\Sigma)_{r}^{\text{out}}\right)}{{r}}\right).
$$
By \cref{p: minkowski federer} and by applying \eqref{int mink content lower semicont} and \eqref{ext mink content lower semicont} with $O=\R^d$ we deduce
$$
\mathcal{H}^{d-1}(\Sigma)\leq \frac{1}{2}\left( \liminf_{r\rightarrow 0} \frac{\mathcal H^d \left((\Sigma)_{r}^{\text{in}}\right)}{{r}} +\liminf_{r\rightarrow 0}  \frac{\mathcal H^d \left((\Sigma)_{r}^{\text{out}}\right)}{{r}}\right)\leq \mathcal{H}^{d-1}(\Sigma).
$$
In particular
$$
\liminf_{r\rightarrow 0} \frac{\mathcal H^d \left((\Sigma)_{r}^{\text{in}}\right)}{{r}} +\liminf_{r\rightarrow 0}  \frac{\mathcal H^d \left((\Sigma)_{r}^{\text{out}}\right)}{{r}}=2\mathcal{H}^{d-1}(\Sigma),
$$
which, by using again  \eqref{int mink content lower semicont} and \eqref{ext mink content lower semicont}, necessarily implies
$$
\liminf_{r\rightarrow 0}  \frac{\mathcal H^d \left((\Sigma)_{r}^{\text{in}}\right)}{{r}}=\mathcal{H}^{d-1}(\Sigma) \qquad \text{and}\qquad \liminf_{r\rightarrow 0}  \frac{\mathcal H^d \left((\Sigma)_{r}^{\text{out}}\right)}{{r}}=\mathcal{H}^{d-1}(\Sigma).
$$
Since the above inferior limits are uniquely defined and do not depend on the choice of the sequence $r\rightarrow 0$, we conclude
$$
\lim_{r\rightarrow 0}  \frac{\mathcal H^d \left((\Sigma)_{r}^{\text{in}}\right)}{{r}}=\mathcal{H}^{d-1}(\Sigma) \qquad \text{and}\qquad \lim_{r\rightarrow 0}  \frac{\mathcal H^d \left((\Sigma)_{r}^{\text{out}}\right)}{{r}}=\mathcal{H}^{d-1}(\Sigma).
$$
\end{proof}

\section{Normal Lebesgue trace: Gauss-Green  and further properties}\label{S:NLT properties}
In this section we prove several properties of the normal Lebesgue trace, the most important being the Gauss-Green identity. In addition to their possible independent interest, such properties will be used in the proof of \cref{T:CE general} and for a comparison with the previous results obtained in \cite{CDS14}.

\subsection{Gauss-Green identity}

Here we prove \cref{T:strong trace equals weak},  together with several others properties relating integrals on tubular neighbourhoods to boundary integrals of traces, when the latter are suitably defined. Everything will follow from the next general proposition.
\begin{proposition}\label{P:traces and tub neigh general}
Let $f:\Omega\rightarrow \R$, $f\in L^\infty(\Omega)$, $\Omega\subset \R^d$ a bounded open set with Lipschitz boundary and $\Sigma\subset \partial \Omega$ closed. Assume that there exists $f^\Sigma:\Sigma\rightarrow \R$ such that 
\begin{equation}\label{eq:trace general}
 \lim_{r\rightarrow  0}\frac{1}{r^d} \int_{B_r (x)\cap\Omega} \left|f(y)- f^\Sigma (x)\right|\,dy=0\qquad \text{for } \mathcal{H}^{d-1}\text{-a.e. } x\in \Sigma.
 \end{equation}
 Then, for any $\varphi\in C^\infty_c(\R^d)$  it holds 
 \begin{equation}
     \lim_{r\rightarrow 0}\frac{1}{r}\int_{(\Sigma)_r^{\rm in}}f\varphi \,dy=\int_\Sigma f^\Sigma \varphi \,d\mathcal H^{d-1}.
 \end{equation}
\end{proposition}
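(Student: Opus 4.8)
The plan is to compare $f$ on the thin interior tube $(\Sigma)_r^{\rm in}$ with a genuinely \emph{continuous} surrogate for $f^{\Sigma}$, selected on a large compact portion of $\Sigma$, and then to transfer the limit to the boundary by the weak convergence of the one-sided Minkowski content from \cref{C:conv one side mink}. As a preliminary remark, I would first observe that $\abs{f^{\Sigma}}\le \norm{f}_{L^\infty(\Omega)}=:M$ holds $\mathcal H^{d-1}$-a.e.\ on $\Sigma$: by the interior cone property of \cref{l: lipschitz set vs boundary cone property} one has $\mathcal H^d(B_\rho(x)\cap\Omega)\ge c_d\rho^d$ for $\mathcal H^{d-1}$-a.e.\ $x\in\partial\Omega$ and small $\rho$, so dividing \eqref{eq:trace general} by this quantity exhibits $f^{\Sigma}(x)$ as a limit of averages of $f$. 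Since $\Sigma$ is compact and $\mathcal H^{d-1}\llcorner\Sigma$ is finite, this gives in particular $f^{\Sigma}\varphi\in L^1(\Sigma;\mathcal H^{d-1})$.

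Fix $\eta>0$. Combining Lusin's theorem, Egorov's theorem and inner regularity of $\mathcal H^{d-1}\llcorner\Sigma$, I would produce a compact set $K\subset\Sigma$ with $\mathcal H^{d-1}(\Sigma\setminus K)<\eta$ such that $f^{\Sigma}|_{K}$ is continuous --- hence extends to some $g\in C(\R^d)$ with $\norm{g}_{L^\infty}\le M$ by Tietze --- and such that the limit in \eqref{eq:trace general} is \emph{uniform} over $x\in K$: there is $r_0>0$ with $\rho^{-d}\int_{B_\rho(x)\cap\Omega}\abs{f-f^{\Sigma}(x)}\,dy<\eta$ for all $x\in K$ and $\rho\in(0,r_0)$. (For Egorov one checks that $\Psi_r(x):=\sup_{0<\rho<r}\rho^{-d}\int_{B_\rho(x)\cap\Omega}\abs{f-f^{\Sigma}(x)}\,dy$ is Borel in $x$ --- joint measurability of the integrand plus continuity in $\rho$ --- and non-increasing in $r$ with $\Psi_r\to0$ a.e.\ on $\Sigma$, whence uniform convergence off a small set.) I would then split, for $r<r_0/3$,
\[
\frac1r\int_{(\Sigma)_r^{\rm in}}f\varphi\,dy=\frac1r\int_{(\Sigma)_r^{\rm in}\setminus(K)_r^{\rm in}}f\varphi\,dy+\frac1r\int_{(K)_r^{\rm in}}(f-g)\varphi\,dy+\frac1r\int_{(K)_r^{\rm in}}g\varphi\,dy .
\]
The first term is $O(M\norm{\varphi}_\infty\eta)$ in the limit: since $(K)_r^{\rm in}\subset(\Sigma)_r^{\rm in}$, \cref{C:conv one side mink} applied to the closed sets $\Sigma$ and $K$ (together with the convergence of masses established in its proof) gives $\tfrac1r\mathcal H^d\big((\Sigma)_r^{\rm in}\setminus(K)_r^{\rm in}\big)\to\mathcal H^{d-1}(\Sigma)-\mathcal H^{d-1}(K)<\eta$. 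The third term converges to $\int_K g\varphi\,d\mathcal H^{d-1}=\int_K f^{\Sigma}\varphi\,d\mathcal H^{d-1}$ directly by \cref{C:conv one side mink}, since $g\varphi\in C_c(\R^d)$, and this is within $M\norm{\varphi}_\infty\eta$ of $\int_\Sigma f^{\Sigma}\varphi\,d\mathcal H^{d-1}$.

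The core of the argument --- and what I expect to be the main obstacle --- is the middle term $\tfrac1r\int_{(K)_r^{\rm in}}\abs{f-g}\,dy$. Here I would run a Vitali-type covering: let $\{B_r(x_i)\}_i$ be a maximal family of pairwise disjoint balls with centers $x_i\in K$, so that $(K)_r^{\rm in}\subset\bigcup_i\big(B_{3r}(x_i)\cap\Omega\big)$. Since $K$ is compact and countably $(d-1)$-rectifiable (a closed subset of the Lipschitz boundary is of the form required by \cref{d: rectifiable sets}), \cref{p: minkowski federer} gives $\mathcal H^d\big((K)_r\big)\le \big(2\mathcal H^{d-1}(K)+o(1)\big)r$, whence the number of balls satisfies $N_r\le C_d\,\mathcal H^{d-1}(K)\,r^{1-d}$ for small $r$. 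On each ball I bound $\abs{f-g}\le\abs{f-f^{\Sigma}(x_i)}+\abs{g(x_i)-g}$ (using $g(x_i)=f^{\Sigma}(x_i)$): the first contribution integrates to at most $\eta(3r)^d$ by the uniform estimate above, the second to at most $\omega_g(3r)\,\omega_d(3r)^d$, with $\omega_g$ the modulus of continuity of $g$ on a fixed compact neighbourhood of $K$. Summing over $i$, dividing by $r$ and inserting the bound on $N_r$ yields $\limsup_{r\to0}\tfrac1r\int_{(K)_r^{\rm in}}\abs{f-g}\,dy\le C_d\,\eta\,\mathcal H^{d-1}(\Sigma)$. Collecting the three terms gives $\limsup_{r\to0}\big|\tfrac1r\int_{(\Sigma)_r^{\rm in}}f\varphi\,dy-\int_\Sigma f^{\Sigma}\varphi\,d\mathcal H^{d-1}\big|\le C(d,\varphi,\Sigma)\,\eta$, and letting $\eta\to0$ finishes the proof. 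Let me stress why the naive shortcut fails: one is tempted to replace $f$ by $f^{\Sigma}\circ\pi_\Sigma$ and invoke \cref{p: almost continuous test function}, but $f^{\Sigma}\circ\pi_\Sigma$ is in general discontinuous at \emph{every} point of $\Sigma$, and the non-closed "bad" set $\Sigma\setminus K$ carries no control on its one-sided Minkowski content in terms of $\mathcal H^{d-1}$; the Egorov/Lusin truncation together with the ball count from \cref{p: minkowski federer} is precisely what circumvents both difficulties.
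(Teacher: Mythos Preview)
Your proof is correct and follows essentially the same strategy as the paper's own proof: Lusin/Egorov/Tietze to replace $f^\Sigma$ by a continuous surrogate on a large compact $K\subset\Sigma$, weak convergence of the one-sided Minkowski content to handle the continuous term and the off-$K$ error, and a Vitali covering with the Minkowski-content ball count to control the core discrepancy $\tfrac1r\int_{(K)_r^{\rm in}}|f-g|$. The only cosmetic difference is that the paper extends $f^\Sigma|_K$ to $C(\partial\Omega)$ and composes with the nearest-point projection $\pi_{\partial\Omega}$ (then invokes \cref{p: almost continuous test function}), whereas you extend directly to $C(\R^d)$ and test against $C_c$ functions --- a mild simplification.
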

\begin{remark}\label{R:trace is bounded}
If $f\in L^\infty(\Omega)$, the sequence of functions 
$$
\Sigma\ni x\mapsto \frac{1}{r^d}\int_{B_r(x)\cap\Omega} f(y)\,dy
$$
is bounded in $L^\infty(\Sigma;\mathcal H^{d-1})$. Thus, $f^\Sigma \in L^\infty(\Sigma;\mathcal H^{d-1})$.
\end{remark}
A direct corollary of \cref{P:traces and tub neigh general} is the following.
\begin{corollary}\label{P:strong trace convergence}
Let $u:\Omega\rightarrow \R^d$, $u\in L^\infty(\Omega)$, $\Omega\subset \R^d$ a bounded open set with Lipschitz boundary and set
  \begin{equation} \label{eq:cut_off}
\chi_r (y):= \begin{cases}
    1 & y \in \Omega \setminus (\partial \Omega)_r^{\rm in}
    \\\frac{d_{\partial \Omega}(y)}{r} & y \in (\partial \Omega)_r^{\rm in}. 
\end{cases}
\end{equation}
Assume that $u$ has an outward normal Lebesgue trace $u^{\partial \Omega}_n$ on $\partial \Omega$ according to \cref{D:Leb normal trace}.
Then, for any $\varphi\in C^\infty_c(\R^d)$, it holds
\begin{equation}\label{converg_to_strong_trace}
\lim_{r\rightarrow 0} \int_\Omega \varphi u\cdot \nabla \chi_r  \,dy=-\int_{\partial \Omega} 
 \varphi u^{\partial \Omega}_n \,d\mathcal H^{d-1}.
 \end{equation}
\end{corollary}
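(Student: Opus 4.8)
The plan is to reduce \eqref{converg_to_strong_trace} to \cref{P:traces and tub neigh general} applied to the \emph{scalar} field $g:=u\cdot\nabla d_{\partial\Omega}$. First I would rewrite the left-hand side purely as an integral over the interior tubular neighbourhood. Since $d_{\partial\Omega}$ is $1$-Lipschitz, the cutoff $\chi_r$ from \eqref{eq:cut_off} satisfies $\chi_r=\min(d_{\partial\Omega}/r,1)$ on $\Omega$; it is identically $1$ on $\Omega\setminus(\partial\Omega)_r^{\rm in}$ and equals $d_{\partial\Omega}/r$ on the open set $(\partial\Omega)_r^{\rm in}$. By the chain rule for Lipschitz functions its gradient is $\tfrac1r\nabla d_{\partial\Omega}$ for a.e. point of $(\partial\Omega)_r^{\rm in}$ and $0$ a.e. elsewhere. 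Hence, for every $\varphi\in C^\infty_c(\R^d)$,
\[
\int_\Omega \varphi\,u\cdot\nabla\chi_r\,dy=\frac1r\int_{(\partial\Omega)_r^{\rm in}}\varphi\,(u\cdot\nabla d_{\partial\Omega})\,dy=\frac1r\int_{(\partial\Omega)_r^{\rm in}}\varphi\, g\,dy.
\]

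Next I would observe that $g\in L^\infty(\Omega)$ (since $u\in L^\infty(\Omega)$ and $|\nabla d_{\partial\Omega}|\le1$ a.e.), and that the defining property of the inward normal Lebesgue trace in \cref{D:Leb normal trace} is \emph{exactly} hypothesis \eqref{eq:trace general} of \cref{P:traces and tub neigh general} for $f=g$, with $\Sigma=\partial\Omega$ closed and boundary trace $g^{\partial\Omega}=u^{\partial \Omega}_{-n}=-u^{\partial\Omega}_n$ (the latter function being automatically in $L^\infty$ by \cref{R:trace is bounded}). Applying \cref{P:traces and tub neigh general} and using $u^{\partial\Omega}_{-n}=-u^{\partial\Omega}_n$ gives
\[
\lim_{r\to0}\frac1r\int_{(\partial\Omega)_r^{\rm in}}\varphi\, g\,dy=\int_{\partial\Omega}\varphi\, u^{\partial\Omega}_{-n}\,d\mathcal H^{d-1}=-\int_{\partial\Omega}\varphi\, u^{\partial\Omega}_n\,d\mathcal H^{d-1},
\]
which, combined with the previous display, is precisely \eqref{converg_to_strong_trace}.

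I do not expect a genuine obstacle, since all the analytic content already sits in \cref{P:traces and tub neigh general}, which may be assumed; the corollary is essentially a change of viewpoint from the vector field $u$ to the scalar field $u\cdot\nabla d_{\partial\Omega}$. The only point deserving a line of care is the a.e. computation of $\nabla\chi_r$, in particular that the level set $\{d_{\partial\Omega}=r\}\cap\Omega$ produces no spurious term; this follows from the standard fact that the gradient of a Lipschitz function vanishes $\mathcal L^d$-a.e. on each of its level sets (equivalently, by the coarea formula this set is $\mathcal L^d$-negligible for all but countably many $r$, which is enough since we let $r\to0$).
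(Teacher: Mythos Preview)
Your proof is correct and follows essentially the same route as the paper: rewrite $\int_\Omega \varphi\,u\cdot\nabla\chi_r\,dy$ as $\frac{1}{r}\int_{(\partial\Omega)_r^{\rm in}}\varphi\,(u\cdot\nabla d_{\partial\Omega})\,dy$ and then apply \cref{P:traces and tub neigh general} with $f=u\cdot\nabla d_{\partial\Omega}$, $f^\Sigma=u^{\partial\Omega}_{-n}$ and $\Sigma=\partial\Omega$. Your extra remarks on the a.e. computation of $\nabla\chi_r$ are fine but not strictly needed beyond what the paper implicitly uses.
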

\begin{proof}
    The left-hand side in \eqref{converg_to_strong_trace} can be written as 
    $$
    \lim_{r\rightarrow 0} \frac{1}{r}\int_{(\partial \Omega)_r^{\rm in}} \varphi u\cdot \nabla d_{\partial\Omega}  \,dy.
    $$
    Thus, by applying \cref{P:traces and tub neigh general} with $f=u\cdot \nabla d_{\partial \Omega}$, $f^\Sigma=u^{\partial \Omega}_{-n}$ and $\Sigma =\partial \Omega$, we obtain
$$
\lim_{r\rightarrow 0} \int_\Omega \varphi u\cdot \nabla \chi_r  \,dy=\int_{\partial \Omega} 
 \varphi u^{\partial \Omega}_{-n} \,d\mathcal H^{d-1}.
$$
The proof is concluded since $u^{\partial \Omega}_{-n}=-u^{\partial \Omega}_{n}$.
\end{proof}
Then, \cref{T:strong trace equals weak} directly follows.
\begin{proof}[Proof of \cref{T:strong trace equals weak}]
Denote $\lambda :=\div u$. Since $\varphi \chi_r \in \Lip_c(\R^d)$ and $\varphi \chi_r \big|_{\partial \Omega}\equiv 0$, by \eqref{distr_norm_trace} we have
$$
\int_\Omega \varphi \chi_r \,d\lambda + \int_\Omega \varphi u\cdot \nabla \chi_r  \,dy + \int_\Omega \chi_r u\cdot\nabla \varphi \,dy=0.
$$
Letting $r\rightarrow 0$,  since $\Omega$ is open, we have 
$$
\int_\Omega \chi_r u\cdot \nabla \varphi \,dy\rightarrow \int_\Omega u\cdot \nabla \varphi \,dy \qquad \text{and} \qquad \int_\Omega \varphi \chi_r \,d\lambda \rightarrow \int_\Omega \varphi \,d\lambda.
 $$
 Thus, by \cref{P:strong trace convergence} we conclude
 $$
\int_\Omega \varphi  \,d\lambda +  \int_\Omega  u\cdot\nabla \varphi \,dy=\int_{\partial \Omega} u^{\partial \Omega}_n \varphi \,d\mathcal H^{d-1}.
$$
According to \eqref{distr_norm_trace} the left hand side of the above equation equals $\int_{\partial \Omega} \tr_n(u;\partial\Omega) \varphi \,d\mathcal H^{d-1}$, which concludes the proof by the arbitrariness of $\varphi$.
\end{proof}

We are left to prove the key \cref{P:traces and tub neigh general}.

\begin{proof}[Proof of \cref{P:traces and tub neigh general}]
Let $\delta>0$ be fixed. By Lusin's theorem we find a closed set $A_1 \subset \Sigma$ such that $\mathcal{H}^{d-1}(\Sigma \setminus A_1) < \frac{\delta}{2}$ and $f^\Sigma$ is continuous on $A_1$. Let $r_k\rightarrow 0$ be any sequence. By Egorov's theorem, we find $A_2\subset \Sigma$, with $\mathcal{H}^{d-1}(\Sigma \setminus A_2) < \frac{\delta}{2}$, such that the convergence in \eqref{eq:trace general} is uniform on $A_2$. To sum up, by setting $A:=A_1\cap A_2 \subset \Sigma$ we have
\begin{equation}\label{A_is_small}
    \mathcal{H}^{d-1}(\Sigma \setminus A) < \delta,
\end{equation}
$f^\Sigma$ is continuous on $A$ and there exists $k_0\in \N$ such that for any $k>k_0$ and for any $x \in A$ it holds
\begin{equation} \label{eq: smallness 1}
    \frac{1}{r_k^d} \int_{\Omega \cap B_{5r_k}(x)} \abs{f(y) - f^{\Sigma}(x)}\, dy <\delta.
\end{equation}
By Tietze extension theorem we find a continuous function $\tilde f^\Sigma:\partial \Omega\rightarrow \R$ such that $\tilde f^\Sigma \equiv  f^\Sigma$ on $A$ and $\|\tilde f^\Sigma\|_{L^\infty(\partial \Omega)} \leq \norm{ f^\Sigma}_{L^\infty(\Sigma)}$. Since $\partial\Omega$ is compact, $\tilde f^\Sigma$ is uniformly continuous. Denote by $\Tilde{\gamma}$ its modulus of continuity. Let $\varphi \in C^\infty_c(\R^d)$ be any test function. Then, by using the projection onto $\partial \Omega$ defined in \cref{l:projection onto boundary}, we split the integral 
\begin{align}
    \abs{\frac{1}{r_k} \int_{(\Sigma)^{\rm in}_{r_k}} \varphi f \, dy - \int_{\Sigma} \varphi f^\Sigma \, d \mathcal{H}^{d-1} }  \leq & \norm{\varphi}_{L^\infty(\R^d)} \int_{\Sigma} \abs{ f^{\Sigma} - \tilde f^\Sigma } \, d \mathcal{H}^{d-1}
    \\ &  + \abs{\frac{1}{r_k} \int_{(\Sigma)^{\rm in}_{r_k}} \varphi \tilde f^\Sigma\circ \pi_{\partial \Omega} \, dy - \int_{\Sigma} \varphi \tilde f^\Sigma \, d \mathcal{H}^{d-1} }
    \\ &  + \frac{\norm{\varphi}_{L^\infty(\R^d)}}{r_k} \int_{ (\Sigma)^{\rm in}_{r_k} \setminus (A)_{r_k} } \abs{ f - \tilde f^\Sigma \circ \pi_{\partial \Omega} } \, d y 
    \\ &  + \frac{\norm{\varphi}_{L^\infty(\R^d)}}{r_k} \int_{(\Sigma)^{\rm in}_{r_k} \cap (A)_{r_k} } \abs{f - \tilde f^\Sigma\circ \pi_{\partial \Omega}} \, dy 
    \\ = & I_k + II_k + III_k + IV_k. 
\end{align}
By \eqref{A_is_small} together with \cref{R:trace is bounded} we have
\begin{equation}
    I_k \lesssim \mathcal{H}^{d-1}(\Sigma \setminus A) \norm{ f^\Sigma}_{L^\infty(\Sigma)} \lesssim \delta. 
\end{equation}
Moreover,  \cref{l:projection onto boundary}  implies that $\tilde f^\Sigma\circ \pi_{\partial \Omega}$ is continuous on $\partial \Omega$. Thus, by  \cref{C:conv one side mink} and \cref{p: almost continuous test function} we deduce $\lim_{k \to \infty} II_{k} = 0$.
Note that here we are allowed to apply \cref{p: almost continuous test function} since our sequence of measures is concentrated on compact sets, thus the two notions of convergence \eqref{tight_convergence} and \eqref{eq: weak convergence test} are equivalent.

To estimate $III_{k}$, since both $\Sigma$ and $A$ are closed $(d-1)$-rectifiable sets, by \cref{p: minkowski federer} it holds that 
\begin{equation}
    \lim_{r \to 0} \frac{\mathcal{H}^d ((\Sigma)_r) }{ \omega_{1}r} =  \mathcal{H}^{d-1}(\Sigma) \qquad \text{ and } \qquad \lim_{r \to 0} \frac{\mathcal{H}^d ((A)_r) }{\omega_{1}r} =  \mathcal{H}^{d-1}(A).
\end{equation}
Thus, we infer that 
\begin{equation}
    \lim_{r \to 0} \frac{\mathcal{H}^d ( (\Sigma)_r \setminus (A)_r ) }{\omega_{1}r} =  \mathcal{H}^{d-1}(\Sigma \setminus A) < \delta,
\end{equation}
from which we deduce 
\begin{align}
    \limsup_{k \to \infty} III_k \lesssim \limsup_{k \to \infty} \frac{\mathcal{H}^d \left((\Sigma)^{\rm in}_{r_k} \setminus (A)_{r_k}  \right) }{r_k}    \lesssim \limsup_{k \to \infty} \frac{\mathcal{H}^d ((\Sigma)_{r_k} \setminus (A)_{r_k}) }{r_k} \lesssim \delta. 
\end{align} 
We are left with $IV_k$. By Vitali's covering lemma we find a disjoint family of balls $\{B_r(x_j)\}_{j \in J}$ such that $x_j \in A$ for any $j \in J$ and 
$$ (A)_r \subset \bigcup_{j \in J} B_{5 r}(x_j). $$
Since the Minkowski dimension of $\partial \Omega$ is $d-1$, for sufficiently small radii $r$ it must hold that 
\begin{equation}
    \# J \lesssim r^{1-d}. \label{eq: cardinality of covering}  
\end{equation}
Then, recalling that $\tilde f^\Sigma=  f^\Sigma$ on $A\subset \Sigma$, we have that 
\begin{align}
    IV_k & \lesssim \frac{1}{r_k} \sum_{j \in J} \int_{\Omega \cap B_{5 r_k}(x_j)} \abs{f(y)  - \tilde f^\Sigma( \pi_{\partial \Omega}(y)) }\, dy 
    \\ & \leq \frac{1}{r_k} \sum_{j \in J} \int_{\Omega \cap B_{5 r_k}(x_j)} \abs{f (y) -  f^\Sigma (x_j)}\, dy + \frac{1}{r_k} \sum_{j \in J} \int_{\Omega \cap B_{5r_k}(x_j)} \abs{\tilde f^\Sigma(x_j) - \tilde f^\Sigma(\pi_{\partial\Omega}(y))}\, dy 
    \\ & = IV^1_k + IV^2_k. 
\end{align}
By \eqref{eq: smallness 1} and \eqref{eq: cardinality of covering}, for $k>k_0$ we infer that $IV^1_k \lesssim \# J \delta r_k^{d-1} \lesssim \delta$.
For any $j \in J$ and for almost every $y \in B_{5r_k}(x_j) \cap \Omega$, by the minimality of $\pi_{\partial \Omega} (y)$ we get
$$\abs{\pi_{\partial \Omega}(y) - x_j} \leq \abs{\pi_{\partial \Omega}(y) - y} + \abs{y - x_j} \leq 2 \abs{y-x_j} \leq 10 r_k, $$
from which, recalling that $\tilde \gamma$ is the modulus of continuity of $\tilde f^\Sigma$, we deduce
\begin{align}
    IV_k^2 \lesssim \frac{\# J}{r_k} \Tilde{\gamma}(10 r_k) \mathcal{H}^d(B_{5r_k}) \lesssim \Tilde{\gamma}(10 r_k).
\end{align}
Thus, we achieved
\begin{equation}
    \limsup_{k \to \infty} IV_k \lesssim \lim_{k \to \infty} \Tilde{\gamma}(10 r_k) + \delta \lesssim \delta. 
\end{equation}
To summarize, we have shown that 
\begin{equation}
    \limsup_{k \to \infty} \abs{ \frac{1}{r_k} \int_{ (\Sigma)^{\rm in}_{r_k}} \varphi f \, dy - \int_{\Sigma} \varphi  f^\Sigma \, d \mathcal{H}^{d-1} } \lesssim \delta.
\end{equation}
The conclusion immediately follows since both $r_k\rightarrow 0$ and $\delta>0$ are arbitrary.
\end{proof}

Since it might be of independent interest, we also state the following result, which in turn generalizes \cite{DRINV23}*{Proposition 5.3}.
\begin{corollary}\label{P:strong trace convergence abs value}
Let $u\in L^\infty(\Omega)$ be a vector field, $\Omega\subset \R^d$ a bounded open set with Lipschitz boundary and let $\chi_r$ be as in \eqref{eq:cut_off}.
Assume that $u$ has an outward normal Lebesgue trace $u^{\partial \Omega}_n$ on $\partial \Omega$ according to \cref{D:Leb normal trace}.
Then, for any $\varphi\in C^\infty_c(\R^d)$, it holds
\begin{equation}
\lim_{r\rightarrow 0} \int_\Omega \varphi \left|u\cdot \nabla \chi_r \right| \,dy=\int_{\partial \Omega} 
 \varphi \left|u^{\partial \Omega}_n\right| \,d\mathcal H^{d-1}.
 \end{equation}
\end{corollary}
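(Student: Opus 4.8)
The plan is to reduce everything to \cref{P:traces and tub neigh general} applied to the scalar function $f:=\abs{u\cdot\nabla d_{\partial\Omega}}$. First I would use, exactly as in the proof of \cref{P:strong trace convergence}, that $\nabla\chi_r=\frac1r\mathds{1}_{(\partial\Omega)_r^{\rm in}}\nabla d_{\partial\Omega}$ $\mathcal{L}^d$-a.e.\ in $\Omega$ (on $(\partial\Omega)_r^{\rm in}$ one has $\chi_r=d_{\partial\Omega}/r$, while $\chi_r$ is locally constant, hence has vanishing gradient a.e., on $\{d_{\partial\Omega}\geq r\}\cap\Omega$). Consequently
$$\int_\Omega\varphi\abs{u\cdot\nabla\chi_r}\,dy=\frac1r\int_{(\partial\Omega)_r^{\rm in}}\varphi\,\abs{u\cdot\nabla d_{\partial\Omega}}\,dy,$$
so it only remains to identify the limit of the right-hand side as $r\to0$.

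Second, I would verify that $f=\abs{u\cdot\nabla d_{\partial\Omega}}$ satisfies the hypothesis \eqref{eq:trace general} of \cref{P:traces and tub neigh general} with $\Sigma=\partial\Omega$ and boundary trace $f^\Sigma=\abs{u_{-n}^{\partial\Omega}}=\abs{u_n^{\partial\Omega}}$. One has $f\in L^\infty(\Omega)$ since $u\in L^\infty(\Omega)$ and $\abs{\nabla d_{\partial\Omega}}\leq1$ a.e. Setting $g:=u\cdot\nabla d_{\partial\Omega}$, the assumption that $u$ admits an inward normal Lebesgue trace says precisely that $g$ has Lebesgue trace $g^{\partial\Omega}=u_{-n}^{\partial\Omega}$ in the sense of \cref{D:Leb normal trace}, i.e.\ $\frac{1}{r^d}\int_{B_r(x)\cap\Omega}\abs{g(y)-u_{-n}^{\partial\Omega}(x)}\,dy\to0$ for $\mathcal{H}^{d-1}$-a.e.\ $x\in\partial\Omega$. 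The reverse triangle inequality $\big|\,\abs{g(y)}-\abs{u_{-n}^{\partial\Omega}(x)}\,\big|\leq\abs{g(y)-u_{-n}^{\partial\Omega}(x)}$ then gives, for $\mathcal{H}^{d-1}$-a.e.\ $x\in\partial\Omega$,
$$\frac{1}{r^d}\int_{B_r(x)\cap\Omega}\big|\,\abs{g(y)}-\abs{u_{-n}^{\partial\Omega}(x)}\,\big|\,dy\leq\frac{1}{r^d}\int_{B_r(x)\cap\Omega}\abs{g(y)-u_{-n}^{\partial\Omega}(x)}\,dy\longrightarrow 0,$$
which is exactly \eqref{eq:trace general} for $f$, with $f^\Sigma=\abs{u_{-n}^{\partial\Omega}}=\abs{u_n^{\partial\Omega}}$ (recall $u_n^{\partial\Omega}=-u_{-n}^{\partial\Omega}$).

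Finally, applying \cref{P:traces and tub neigh general} with this choice of $f$, $f^\Sigma$ and $\Sigma=\partial\Omega$ yields
$$\lim_{r\to0}\frac1r\int_{(\partial\Omega)_r^{\rm in}}\varphi\,\abs{u\cdot\nabla d_{\partial\Omega}}\,dy=\int_{\partial\Omega}\varphi\,\abs{u_n^{\partial\Omega}}\,d\mathcal{H}^{d-1},$$
which, combined with the first step, is precisely the asserted identity. I do not foresee a genuine obstacle here: the two substantive points are the $\mathcal{L}^d$-a.e.\ formula for $\nabla\chi_r$ (already exploited in \cref{P:strong trace convergence}) and the elementary observation that passing to absolute values preserves the Lebesgue-trace property; all the real analytic content is packaged inside \cref{P:traces and tub neigh general}. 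Note in particular that $\varphi$ is not required to be sign-definite, so no additional argument is needed to handle the modulus of the integrand.
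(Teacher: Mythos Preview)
Your proposal is correct and follows exactly the paper's approach: the paper's proof consists precisely of the reverse triangle inequality observation $\big|\,|u\cdot\nabla d_{\partial\Omega}|-|u_{-n}^{\partial\Omega}(x)|\,\big|\leq |u\cdot\nabla d_{\partial\Omega}-u_{-n}^{\partial\Omega}(x)|$ followed by an appeal to \cref{P:traces and tub neigh general}. You have simply spelled out the intermediate details (the formula for $\nabla\chi_r$ and the identification $f^\Sigma=|u_n^{\partial\Omega}|$) that the paper leaves implicit.
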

Notice that 
\[
    \frac{1}{r^d}\int_{B_r(x)\cap\Omega} \abs{|u\cdot\nabla d_{\partial\Omega}|-|u_{-n}^{\partial\Omega}(x)|} \, dy \leq
    \frac{1}{r^d}\int_{B_r(x)\cap\Omega} \left|u\cdot\nabla d_{\partial\Omega}-u_{-n}^{\partial\Omega}(x)\right| \, dy.
\]
Thus, \cref{P:strong trace convergence abs value} follows again by \cref{P:traces and tub neigh general}. By recalling the notion of traces for $BV$ functions from \cref{T:trace_in_BV}, we also get the following corollary, which will be useful later on in \cref{S:Cont Eq}.
\begin{corollary}\label{p: restriction of traces sobolev}
Let $\Omega \subset \R^d$ be a bounded open set with Lipschitz boundary and let $f \in BV(\Omega)\cap L^\infty (\Omega)$. Let $\Sigma \subset \partial \Omega$ be a closed set. Then, it holds
\begin{equation}
    \lim_{r \to 0} \frac{1}{r} \int_{(\Sigma)_{r}^{\rm in}} f \, dx = \int_{\Sigma} f^{\Omega} \, d \mathcal{H}^{d-1}. 
\end{equation}
\end{corollary}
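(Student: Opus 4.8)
The plan is to read this off directly from Theorem~\ref{T:trace_in_BV} together with Proposition~\ref{P:traces and tub neigh general}. First I would invoke the boundary trace theorem for $BV$ functions: since $f\in BV(\Omega)\cap L^\infty(\Omega)$, Theorem~\ref{T:trace_in_BV} produces $f^\Omega\in L^1(\partial\Omega;\mathcal{H}^{d-1})$ with
\[
\lim_{r\to 0}\frac{1}{r^d}\int_{B_r(x)\cap\Omega}|f(y)-f^\Omega(x)|\,dy=0\qquad\text{for }\mathcal{H}^{d-1}\text{-a.e. }x\in\partial\Omega,
\]
and in particular for $\mathcal{H}^{d-1}$-a.e. $x\in\Sigma$. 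Thus, setting $f^\Sigma:=f^\Omega\big|_\Sigma$, the hypothesis \eqref{eq:trace general} of Proposition~\ref{P:traces and tub neigh general} is met; note that $f\in L^\infty(\Omega)$, so the boundedness requirement there is satisfied, and by Remark~\ref{R:trace is bounded} in fact $f^\Sigma\in L^\infty(\Sigma;\mathcal{H}^{d-1})$.

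Next I would remove the test function appearing in Proposition~\ref{P:traces and tub neigh general}. Since $\Omega$ is bounded, $\overline\Omega$ is compact, so one can choose $\varphi\in C^\infty_c(\R^d)$ with $\varphi\equiv 1$ on $\overline\Omega$. Because $(\Sigma)_r^{\rm in}=(\Sigma)_r\cap\Omega\subset\Omega$ and $\Sigma\subset\partial\Omega\subset\overline\Omega$, we have $\varphi\equiv 1$ on both $(\Sigma)_r^{\rm in}$ and $\Sigma$. Proposition~\ref{P:traces and tub neigh general} then yields
\[
\lim_{r\to 0}\frac{1}{r}\int_{(\Sigma)_r^{\rm in}}f\,dy=\lim_{r\to 0}\frac{1}{r}\int_{(\Sigma)_r^{\rm in}}f\varphi\,dy=\int_\Sigma f^\Sigma\varphi\,d\mathcal{H}^{d-1}=\int_\Sigma f^\Omega\,d\mathcal{H}^{d-1},
\]
which is exactly the claimed identity.

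I do not expect any genuine obstacle here: all the work is already contained in Proposition~\ref{P:traces and tub neigh general}, and the only points to check are the (standard) existence of a cutoff equal to $1$ on $\overline\Omega$ and the observation that the $BV$ trace $f^\Omega$, restricted to the closed set $\Sigma$, is precisely the datum $f^\Sigma$ required there. The hypothesis $f\in L^\infty(\Omega)$, needed to apply Proposition~\ref{P:traces and tub neigh general}, is the only assumption one should be careful to keep track of.
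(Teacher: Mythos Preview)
Your proposal is correct and matches the paper's intended argument: the corollary is stated without an explicit proof, being an immediate consequence of Proposition~\ref{P:traces and tub neigh general} applied to $f$ with $f^\Sigma=f^\Omega|_\Sigma$ (the latter supplied by Theorem~\ref{T:trace_in_BV}). Your extra care in choosing a cutoff $\varphi\equiv 1$ on $\overline\Omega$ to remove the test function is exactly the right detail.
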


\subsection{Positive and negative normal Lebesgue traces}\label{S:NLT local and signed}

We start by specifying how \cref{D:Leb normal trace} trivially extends to the case in which only a portion of the boundary $\Sigma\subset \partial \Omega$ is considered.

\begin{definition}\label{D:NLT local}
    Let $\Omega\subset \R^d$ be a bounded Lipschitz open set, $\Sigma\subset \partial \Omega$ measurable and $u\in L^1(\Omega)$ a vector field. We say that $u$ admits an inward Lebesgue normal trace on $\Sigma$ if there exists a function  $f\in L^1(\Sigma;\mathcal H^{d-1})$ such that, for every sequence $r_k\rightarrow  0$, it holds
    $$
    \lim_{k\rightarrow \infty}\frac{1}{r_k^d} \int_{B_{r_k} (x)\cap\Omega} \left|(u\cdot \nabla d_{\partial \Omega})(y)- f(x)\right|\,dy=0\qquad \text{for } \mathcal{H}^{d-1}\text{-a.e. } x\in \Sigma.
    $$
Whenever such a function exits, we will denote it by $f=: u_{-n}^{\Sigma}$. Consequently, the outward Lebesgue normal trace on $\Sigma$ will be $u_{n}^{\Sigma}:= - u_{-n}^{\Sigma}$.
\end{definition}
Even if the definition is given on any $\Sigma\subset \partial \Omega$, the meaningful case is $\mathcal H^{d-1}(\Sigma)>0$.

\begin{remark}
The same definition can be given for any oriented Lipschitz hypersurfaces $\Sigma\subset \R^d$. In the case $\Sigma \subset \partial\Omega$, an orientation is canonically induced on $\Sigma$. Since it will be sufficient for our purposes, we will only deal with such a case.
\end{remark}

It is rather easy to show that the positive and negative part of the normal Lebesgue trace behave well as soon as the latter exists. Indeed, if $f_+$ and $f_-$ are the positive and the negative part of a function $f$, that is $f=f_+-f_-$, from $|f_--g_-|,|f_+-g_+|\leq |f-g|$ we immediately obtain the following result.

\begin{proposition} \label{P:NLT pos and neg}
Let $\Omega\subset \R^d$ be a bounded Lipschitz open set, $\Sigma\subset \partial \Omega$ be measurable, $u\in L^1(\Omega)$ a vector field which has a normal Lebesgue trace $u^\Sigma_n$ according to \cref{D:NLT local}. Then, for every sequence $r_k\rightarrow  0$, we have
$$ \lim_{k\rightarrow \infty}\frac{1}{r_k^d} \int_{B_{r_k} (x)\cap\Omega} \left|(u\cdot \nabla d_{\partial \Omega})_+(y)- \left(u^\Sigma_{-n}\right)_+(x)\right|\,dy=0\qquad \text{for } \mathcal{H}^{d-1}\text{-a.e. } x\in \Sigma $$
and 
$$ \lim_{k\rightarrow \infty}\frac{1}{r_k^d} \int_{B_{r_k} (x)\cap\Omega} \left|(u\cdot \nabla d_{\partial \Omega})_-(y)- \left(u^\Sigma_{-n}\right)_-(x)\right|\,dy=0\qquad \text{for } \mathcal{H}^{d-1}\text{-a.e. } x\in \Sigma. $$
\end{proposition}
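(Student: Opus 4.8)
The plan is to reduce the whole statement to the defining property of the normal Lebesgue trace in \cref{D:NLT local} by means of an elementary pointwise estimate. The key fact is that the maps $t\mapsto t_+=\max(t,0)$ and $t\mapsto t_-=\max(-t,0)$ are both $1$-Lipschitz on $\R$, so that for every pair of real numbers $a,b$ one has $|a_+-b_+|\le |a-b|$ and $|a_--b_-|\le |a-b|$. Note also that $u\cdot\nabla d_{\partial\Omega}$ is a scalar function on $\Omega$, so its positive and negative parts are well defined, and that $(u^\Sigma_{-n})_\pm\in L^1(\Sigma;\mathcal H^{d-1})$ since $0\le (u^\Sigma_{-n})_\pm\le |u^\Sigma_{-n}|$ and $u^\Sigma_{-n}\in L^1(\Sigma;\mathcal H^{d-1})$ by assumption.

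First I would fix a sequence $r_k\to 0$ and a point $x\in\Sigma$ in the $\mathcal H^{d-1}$-full-measure subset on which the convergence in \cref{D:NLT local} holds, and apply the Lipschitz inequality above pointwise in $y$ with $a=(u\cdot\nabla d_{\partial\Omega})(y)$ and $b=u^\Sigma_{-n}(x)$ (the latter being constant in $y$). Since $b_\pm=(u^\Sigma_{-n})_\pm(x)$, this yields, for a.e.\ $y\in B_{r_k}(x)\cap\Omega$,
\[
\bigl|(u\cdot\nabla d_{\partial\Omega})_+(y)-(u^\Sigma_{-n})_+(x)\bigr|\le \bigl|(u\cdot\nabla d_{\partial\Omega})(y)-u^\Sigma_{-n}(x)\bigr|,
\]
and the analogous bound for the negative parts.

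Then I would integrate over $B_{r_k}(x)\cap\Omega$, divide by $r_k^d$, and let $k\to\infty$: the right-hand side tends to $0$ by the hypothesis that $u^\Sigma_{-n}$ is the inward Lebesgue normal trace of $u$ on $\Sigma$, hence so does the left-hand side, for both the positive and the negative parts. Since this argument applies at $\mathcal H^{d-1}$-a.e.\ $x\in\Sigma$ and for every sequence $r_k\to 0$, the two claimed limits follow. I do not expect any genuine obstacle here: the entire content of the proposition is the $1$-Lipschitz bound for $t\mapsto t_\pm$, and everything else is a direct passage to the limit under the integral sign.
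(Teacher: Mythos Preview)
Your proof is correct and is exactly the approach the paper takes: the authors simply note that $|f_+-g_+|,\,|f_--g_-|\le |f-g|$ and state that the proposition follows immediately from \cref{D:NLT local}. There is nothing to add.
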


As an almost direct consequence we have the following result. 

\begin{corollary} \label{C:NLT pos and neg}
Let $\Omega\subset \R^d$ be a bounded open set with Lipschitz boundary, $\Sigma\subset \partial \Omega$ be measurable, $u\in L^  \infty(\Omega)$ a vector field which has an outward normal Lebesgue trace $u^\Sigma_n$. The following facts are true.
\begin{itemize}
     \item[(i)] If $\tilde \Sigma \subset \Sigma$ is any closed set on which $u^\Sigma_{n}\big|_{\tilde \Sigma} \geq 0$, we have 
\begin{equation}\label{eq:pos norm trace exit}
\lim_{r\rightarrow 0}\frac{1}{r} \int_{(\tilde \Sigma)^{\rm in }_r} (u\cdot \nabla d_{\partial \Omega})_+(y)\,dy=0.
 \end{equation}
\item[(ii)] If $\tilde \Sigma \subset \Sigma$ is any closed set on which $u^\Sigma_{n}\big|_{\tilde \Sigma} \leq 0$, we have 
         $$
\lim_{r\rightarrow 0}\frac{1}{r} \int_{(\tilde \Sigma)^{\rm in }_r} (u\cdot \nabla d_{\partial \Omega})_-(y)\,dy=0.
$$
     \end{itemize}
\end{corollary}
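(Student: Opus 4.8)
The plan is to deduce this directly from Proposition \ref{P:NLT pos and neg} together with the integration-over-tubular-neighbourhoods machinery of Proposition \ref{P:traces and tub neigh general}, exactly as Corollary \ref{P:strong trace convergence} was deduced from Proposition \ref{P:traces and tub neigh general}. I will only write out part $(i)$, since $(ii)$ is identical after replacing the positive part by the negative part. First I would observe that $u \in L^\infty(\Omega)$ implies $f := (u\cdot\nabla d_{\partial\Omega})_+ \in L^\infty(\Omega)$, since $|\nabla d_{\partial\Omega}| \le 1$ a.e. (recall \eqref{eq: gradient distance}). Next, by Proposition \ref{P:NLT pos and neg} applied on $\Sigma$, the function $f$ satisfies, for $\mathcal{H}^{d-1}$-a.e.\ $x \in \Sigma$ (in particular for $\mathcal{H}^{d-1}$-a.e.\ $x \in \tilde\Sigma$),
\[
\lim_{r\to 0}\frac{1}{r^d}\int_{B_r(x)\cap\Omega}\bigl|(u\cdot\nabla d_{\partial\Omega})_+(y) - (u^\Sigma_{-n})_+(x)\bigr|\,dy = 0,
\]
so $f$ admits a trace $f^{\tilde\Sigma} = (u^\Sigma_{-n})_+\big|_{\tilde\Sigma}$ on the closed set $\tilde\Sigma$ in the sense of \eqref{eq:trace general}.

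With this in hand I would apply Proposition \ref{P:traces and tub neigh general} with this choice of $f$, $f^{\tilde\Sigma}$, and with the closed set $\tilde\Sigma$ in place of $\Sigma$: for every $\varphi \in C^\infty_c(\R^d)$,
\[
\lim_{r\to 0}\frac{1}{r}\int_{(\tilde\Sigma)^{\rm in}_r} f\,\varphi\,dy = \int_{\tilde\Sigma} f^{\tilde\Sigma}\,\varphi\,d\mathcal{H}^{d-1} = \int_{\tilde\Sigma}(u^\Sigma_{-n})_+\,\varphi\,d\mathcal{H}^{d-1}.
\]
Now, by hypothesis, $u^\Sigma_n\big|_{\tilde\Sigma} \ge 0$, i.e.\ $u^\Sigma_{-n}\big|_{\tilde\Sigma} = -u^\Sigma_n\big|_{\tilde\Sigma} \le 0$; hence $(u^\Sigma_{-n})_+ = 0$ $\mathcal{H}^{d-1}$-a.e.\ on $\tilde\Sigma$, and the right-hand side vanishes for every test function $\varphi$.

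To upgrade the conclusion from "tested against every $\varphi \in C^\infty_c$" to the unweighted statement \eqref{eq:pos norm trace exit}, I would choose a fixed cutoff $\varphi \in C^\infty_c(\R^d)$ with $0 \le \varphi \le 1$ and $\varphi \equiv 1$ on a neighbourhood of $\overline\Omega$ (possible since $\Omega$ is bounded). Since $(\tilde\Sigma)^{\rm in}_r \subset \Omega$ and $f \ge 0$, for such $\varphi$ we have $\int_{(\tilde\Sigma)^{\rm in}_r} f\,dy = \int_{(\tilde\Sigma)^{\rm in}_r} f\,\varphi\,dy$ for all $r$ small, so the two displayed limits coincide and \eqref{eq:pos norm trace exit} follows. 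There is essentially no obstacle here: the only point requiring a moment of care is that Proposition \ref{P:NLT pos and neg} as stated gives the pointwise blow-up convergence of the positive part of $u\cdot\nabla d_{\partial\Omega}$ towards the positive part of the trace $u^\Sigma_{-n}$ — which is exactly the hypothesis \eqref{eq:trace general} needed to feed into Proposition \ref{P:traces and tub neigh general} — and that positivity of $f$ lets one insert a harmless global cutoff. Part $(ii)$ is verbatim the same with $(\cdot)_+$ replaced by $(\cdot)_-$ throughout, using that $u^\Sigma_{-n}\big|_{\tilde\Sigma}\ge 0$ forces $(u^\Sigma_{-n})_- = 0$ there.
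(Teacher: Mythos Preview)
Your proposal is correct and follows essentially the same route as the paper: apply Proposition~\ref{P:NLT pos and neg} to see that $(u\cdot\nabla d_{\partial\Omega})_+$ has Lebesgue trace $(u^\Sigma_{-n})_+=0$ on $\tilde\Sigma$, then feed this into Proposition~\ref{P:traces and tub neigh general}. The only cosmetic difference is that the paper simply writes ``$\varphi=1$'' when invoking Proposition~\ref{P:traces and tub neigh general}, whereas you (more carefully) insert a cutoff equal to $1$ on a neighbourhood of $\overline\Omega$; since $\Omega$ is bounded this is of course the intended meaning.
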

Restricting to closed subsets of $\Sigma$ in the above result is necessary. Even if $u^\Sigma_n$ has distinguished sign on $\Sigma$, we can not expect the conclusions of \cref{C:NLT pos and neg} to hold replacing $\tilde \Sigma$ by $\Sigma$. Indeed, $\Sigma$ could be countable and dense in $\partial \Omega$, from which $(\Sigma)_r=(\partial \Omega)_r$ for any $r>0$,  but it is clear that any assumption on a $\mathcal H^{d-1}$-negligible subset of $\partial\Omega$ will not suffice to deduce anything in the whole $(\partial \Omega)_r$.

\begin{proof}
    Since $u^\Sigma_{-n}\big|_{\tilde \Sigma} =-u^\Sigma_{n}\big|_{\tilde \Sigma}\leq  0$, by \cref{P:NLT pos and neg} we deduce 
    $$
     \lim_{k\rightarrow \infty}\frac{1}{r_k^d} \int_{B_{r_k} (x)\cap\Omega} (u\cdot \nabla d_{\partial \Omega})_+(y)\,dy=0\qquad \text{for } \mathcal{H}^{d-1}\text{-a.e. } x\in \tilde \Sigma.
    $$
Then, \eqref{eq:pos norm trace exit} follows by applying \cref{P:traces and tub neigh general} with $f = (u \cdot \nabla d_{\partial \Omega})_+$, $f^{\tilde{\Sigma}} = 0$ and $\varphi=1$. The proof of $(ii)$ is completely analogous. 
\end{proof}

We conclude this section by showing that $BV$ vector fields satisfy \eqref{eq:pos norm trace exit} with $\tilde \Sigma$ being the portion of the boundary where $u$ is pointing outward. In particular, this shows that our assumption \eqref{hp: gamma out exit} automatically holds if the vector field $u$ is $BV$ up to the boundary. Thus \cref{T:CE general} offers an honest generalization of \cite{CDS14}. In some sense, and as expected, the next proposition shows that $BV$ vector fields achieve the positive and negative values of their normal trace in a uniform integral sense.

\begin{proposition} \label{P: BV and exit}
Let $\Omega\subset \R^d$ be a bounded open set with Lipschitz boundary and let $u\in BV(\Omega)\cap L^\infty(\Omega)\subset \mathcal{MD}^\infty(\Omega)$. We set 
$$ \Sigma^+:=\left\{x\in \partial \Omega \,:\, \tr_n(u;\partial \Omega)(x)\geq 0\right\} \qquad \text{and} \qquad \Sigma^-:=\partial\Omega\setminus \Sigma^+. $$
Then
\begin{equation}\label{bv out unif}
\lim_{r\rightarrow 0}\frac{1}{r} \int_{(\tilde \Sigma)^{\rm in }_r} (u\cdot \nabla d_{\partial \Omega})_+(y)\,dy=0 \qquad \forall \tilde \Sigma \subset \Sigma^+ \text{ closed}
\end{equation}
and 
\begin{equation}\label{bv in unif}
\lim_{r\rightarrow 0}\frac{1}{r} \int_{(\tilde \Sigma)^{\rm in }_r} (u\cdot \nabla d_{\partial \Omega})_-(y)\,dy=0 \qquad \forall \tilde \Sigma \subset \Sigma^- \text{ closed}.
\end{equation}
\end{proposition}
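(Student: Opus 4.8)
The plan is to deduce the statement directly from the two structural results already proved for the normal Lebesgue trace, so that the $BV$ case reduces to a sign bookkeeping. Since $u\in BV(\Omega)\cap L^\infty(\Omega)$, \cref{R: BV vs normal trace} guarantees that the outward normal Lebesgue trace $u_n^{\partial\Omega}$ exists and satisfies $u_n^{\partial\Omega}=u^\Omega\cdot n$. In particular $u\in\mathcal{MD}^\infty(\Omega)$ admits a normal Lebesgue trace, hence \cref{T:strong trace equals weak} applies and yields $u_n^{\partial\Omega}\equiv\tr_n(u;\partial\Omega)$ as elements of $L^\infty(\partial\Omega;\mathcal H^{d-1})$. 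Consequently the pointwise sign conditions defining $\Sigma^+$ and $\Sigma^-$ through $\tr_n(u;\partial\Omega)$ translate, up to an $\mathcal H^{d-1}$-null set, into $u_n^{\partial\Omega}\geq 0$ $\mathcal H^{d-1}$-a.e.\ on $\Sigma^+$ and $u_n^{\partial\Omega}\leq 0$ $\mathcal H^{d-1}$-a.e.\ on $\Sigma^-$.

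Then I would invoke \cref{C:NLT pos and neg} with $\Sigma=\partial\Omega$, which is legitimate precisely because $u$ has an outward normal Lebesgue trace on the whole of $\partial\Omega$. Given any closed $\tilde\Sigma\subset\Sigma^+$, we have $u_n^{\partial\Omega}\big|_{\tilde\Sigma}\geq 0$ a.e., so part (i) of \cref{C:NLT pos and neg} gives exactly \eqref{bv out unif}; given any closed $\tilde\Sigma\subset\Sigma^-$, we have $u_n^{\partial\Omega}\big|_{\tilde\Sigma}\leq 0$ a.e., so part (ii) gives exactly \eqref{bv in unif}. This closes the proof.

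I do not expect a genuine obstacle. The only point deserving a line of care is that $\Sigma^+$ and $\Sigma^-$ are defined through a specific pointwise representative of $\tr_n(u;\partial\Omega)$, so one must observe that replacing $\tr_n(u;\partial\Omega)$ by $u_n^{\partial\Omega}$ modifies these sets only on an $\mathcal H^{d-1}$-negligible subset of $\partial\Omega$. This is harmless because \cref{C:NLT pos and neg} requires the sign of $u_n^\Sigma$ on $\tilde\Sigma$ only $\mathcal H^{d-1}$-a.e.: indeed its proof passes through the a.e.\ statement of \cref{P:NLT pos and neg} and then an application of \cref{P:traces and tub neigh general} with limiting trace identically $0$. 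Thus no $BV$-specific argument is needed beyond the trace representation of \cref{R: BV vs normal trace}, once the coincidence of the two normal traces from \cref{T:strong trace equals weak} is in place.
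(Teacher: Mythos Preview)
Your proposal is correct and follows essentially the same approach as the paper: invoke the existence of the normal Lebesgue trace for $BV$ vector fields (the paper cites \cite{DRINV23}*{Proposition 5.5} directly, you cite it via \cref{R: BV vs normal trace}), use \cref{T:strong trace equals weak} to match it with the distributional trace and hence read off the sign on $\Sigma^\pm$, and then apply \cref{C:NLT pos and neg}. Your extra paragraph on the $\mathcal H^{d-1}$-a.e.\ representative issue is a welcome clarification but not a departure from the paper's argument.
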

\begin{proof}
    Since $u\in BV(\Omega)$, by \cite{DRINV23}*{Proposition 5.5} we deduce that $u$ admits a normal Lebesgue trace $u^{\partial\Omega}_n$ in the sense of \cref{D:Leb normal trace}. Moreover, \cref{T:strong trace equals weak} implies that $u^{\partial\Omega}_n\big|_{\Sigma^+}\geq 0$, from which \eqref{bv out unif} directly follows by applying \cref{C:NLT pos and neg}. The proof of \eqref{bv in unif} is completely analogous.
\end{proof}
The global $BV(\Omega)$ assumption could have been relaxed to hold only locally around $\Sigma^+$ and $\Sigma^-$, possibly also up to a negligible subset of the boundary. 

\subsection{Lebesgue traces are strictly stronger than distributional traces}\label{S:couter ex} 

In this section we provide an example of a $2$-dimensional bounded divergence-free vector field which has zero normal distributional trace, but does not admit a normal Lebesgue trace. Denote by $\{e_1,e_2\}$ the canonical orthonormal basis of $\R^2$. Consider the square $Q:=\{(x,y)\in\R^2 : 0\leq x<1, \ 1\leq y<2 \}$ and its rescaled and translated copies
\[
    Q_{i,j}:=2^{-j}(Q+i e_1)=\left\{(x,y)\in\R^2\, \colon \,\frac{i}{2^j}\leq x<\frac{i+1}{2^j}, \ 2^{-j}\leq y<2^{-j+1}  \right\}, \qquad \forall i,j\in\Z.
\]
Notice that this family of squares tiles the upper half-plane $\R^2_+$ (see \cref{grid_figure}).

\begin{figure}
\includegraphics[width=0.6\textwidth]{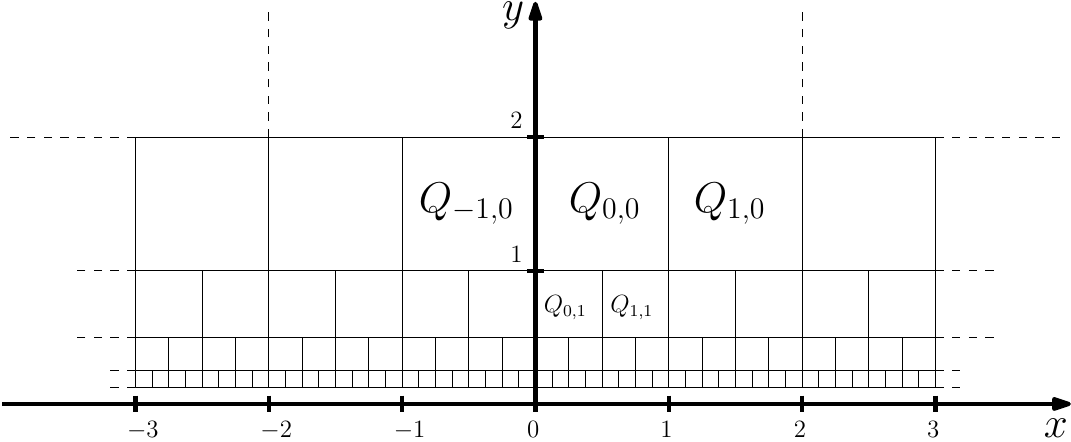}
\caption{The \quotes{tiles} $Q_{i,j}$ become finer approaching the axis $\{y=0\}$.}
\label{grid_figure}
\end{figure}

Take any (possibly smooth) bounded divergence-free vector field $v:Q\to \R^2$ tangent to $\partial Q$, define the corresponding rescaled vector fields $v_{i,j}: Q_{i,j} \to \R^2$ as
\[
    v_{i,j}(x,y):= v(2^jx-i,2^jy)
\]
and combine them to get $u:\R^2_+\to\R^2$ by setting $u=v_{i,j}$ on each $Q_{i,j}$. We establish some properties of this vector field. In the lemma below we will denote by $\tr_n (u;\partial\Omega)$ the distributional normal trace on $\partial \Omega$ according to \cref{D:ditrib norm trace}.

\begin{lemma} \label{l: example}
The vector field $u:\R^2_+\to\R^2$ defined above satisfies the following properties:
\begin{itemize}
    \item[(i)] $u$ is bounded, divergence-free and $\tr_n(u;\partial \R^2_+)\equiv 0$;
    \item[(ii)] the restriction of $u$ to the strip $\{0<y<2^{-k}\}$ is $2^{-k-1}$-periodic in the first variable;
    \item[(iii)] for any $(\bar{x},0) \in \partial \R^2_+$ it holds that 
    \begin{equation}
        \liminf_{r \to 0} \frac{1}{r^2} \int_{B_r^+((\bar{x},0))} \abs{u \cdot e_2} \, dx \, dy \geq \frac{1}{8} \int_{Q} \abs{v \cdot e_2}\, dx \, dy. \label{eq: liminf example}
    \end{equation}
\end{itemize}
In particular, if $v$ satisfies
        \begin{equation}\label{cond strict positive}
            \int_Q |v\cdot e_2| \, dx \, dy >0,
        \end{equation}
        then $u$ does not admit a normal Lebesgue trace on $\partial\R^2_+=\R$ in the sense of \cref{D:Leb normal trace}.
\end{lemma}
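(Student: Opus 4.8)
The plan is to prove the three properties in order, with the bulk of the work going into (iii), which then immediately yields the last assertion via \cref{P:traces and tub neigh general}.

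\textbf{Properties (i) and (ii).} First I would check (ii): if $0<y<2^{-k}$, then $y$ lies in a dyadic strip $[2^{-j},2^{-j+1})$ with $j\geq k+1$, and on $Q_{i,j}$ the field is $v(2^jx-i,2^jy)$, which is $2^{-j}$-periodic in $x$; since $2^{-j}$ divides $2^{-k-1}$ whenever $j\geq k+1$, the restriction to the strip is $2^{-k-1}$-periodic in $x$. For (i), boundedness of $u$ is clear since $\|u\|_{L^\infty}=\|v\|_{L^\infty}$. Divergence-freeness: on each open tile $Q_{i,j}$ we have $\div u=0$ by the chain rule, so $\div u$ is supported on the grid lines $\bigcup_{i,j}\partial Q_{i,j}$. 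Across each interior edge shared by two adjacent tiles, the normal component of $u$ is continuous because $v$ is tangent to $\partial Q$: on a vertical edge the normal component is $v\cdot e_1$, which vanishes on both sides; on a horizontal edge the normal component is $v\cdot e_2$, which also vanishes on the relevant part of $\partial Q$. Hence no singular part appears on interior edges, and $\div u=0$ in $\R^2_+$. The same argument on the boundary edges lying on $\{y=0\}$ shows $u\cdot e_2\to 0$ there in the appropriate weak sense, giving $\tr_n(u;\partial\R^2_+)\equiv 0$; alternatively, one glues as in \cref{l: gluing lemma} after extending $u$ by $0$ and uses that $u$ has no normal jump. (One should be slightly careful: $u$ is only in $\mathcal{MD}^\infty$ locally, but that is all that is needed for the distributional trace to be a bounded function.)

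\textbf{Property (iii), the main point.} Fix $(\bar x,0)\in\partial\R^2_+$ and a small $r>0$. Pick the unique $k\in\N$ with $2^{-k-1}\le r<2^{-k}$. By (ii), on the strip $\{0<y<2^{-k}\}\supset B_r^+((\bar x,0))$ the function $|u\cdot e_2|$ is $2^{-k-1}$-periodic in $x$, and $2^{-k-1}\le r$, so the half-ball $B_r^+((\bar x,0))$ contains (after translating by a period in $x$) at least a fixed fraction of a full fundamental domain's worth of mass. More precisely, inside $B_r^+((\bar x,0))$ one can fit a full dyadic tile $Q_{i,j}$ for a suitable $j$ of order $k$: indeed the tiles $Q_{i,j}$ with $2^{-j}\approx r/4$ have both side lengths $\approx r/4$ and sit in the strip $\{r/4\lesssim y\lesssim r/2\}$, so at least one such tile is entirely contained in $B_r^+((\bar x,0))$ provided $r$ is small. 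On that tile, $\int_{Q_{i,j}}|u\cdot e_2|\,dx\,dy=2^{-2j}\int_Q|v\cdot e_2|\,dx\,dy$ by the change of variables $(x,y)\mapsto(2^jx-i,2^jy)$, whose Jacobian is $2^{-2j}$ (and $u\cdot e_2$ on $Q_{i,j}$ equals $(v\cdot e_2)$ evaluated at the rescaled point). Since $2^{-2j}\gtrsim r^2$ with an explicit constant, dividing by $r^2$ and tracking the constants carefully gives $\tfrac1{r^2}\int_{B_r^+}|u\cdot e_2|\ge c\int_Q|v\cdot e_2|$, and chasing the dyadic bookkeeping one can arrange $c=\tfrac18$. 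Taking $\liminf_{r\to0}$ yields \eqref{eq: liminf example}. The main obstacle here is purely the combinatorial geometry of which tile fits inside which half-ball and optimizing the resulting constant to reach $\tfrac18$; there is no analytic difficulty.

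\textbf{Conclusion.} Suppose, for contradiction, that $u$ admits a normal Lebesgue trace $u^{\partial\R^2_+}_n\in L^1(\R;\mathcal H^1)$ in the sense of \cref{D:Leb normal trace}. Since $\nabla d_{\partial\R^2_+}=-e_2$ on $\R^2_+$, we have $u\cdot\nabla d_{\partial\R^2_+}=-u\cdot e_2$, so the defining limit forces, for $\mathcal H^1$-a.e.\ $\bar x$,
\[
\lim_{r\to0}\frac1{r^2}\int_{B_r^+((\bar x,0))}\bigl|u\cdot e_2-(u^{\partial\R^2_+}_{-n})(\bar x)\bigr|\,dx\,dy=0 .
\]
By (i), $\tr_n(u;\partial\R^2_+)\equiv0$, and by \cref{T:strong trace equals weak} applied on a bounded Lipschitz truncation of $\R^2_+$ (or directly by the uniqueness of the Lebesgue trace together with the Gauss–Green identity), the only candidate is $u^{\partial\R^2_+}_{-n}\equiv0$. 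Hence the above limit says $\tfrac1{r^2}\int_{B_r^+}|u\cdot e_2|\to0$ for a.e.\ $\bar x$, which contradicts \eqref{eq: liminf example} as soon as $\int_Q|v\cdot e_2|\,dx\,dy>0$, i.e.\ \eqref{cond strict positive}. Therefore $u$ does not admit a normal Lebesgue trace. (If one prefers to avoid invoking \cref{T:strong trace equals weak}, one can instead integrate the Lebesgue-trace convergence in $\bar x$ against a test function using \cref{P:traces and tub neigh general} to identify the trace with $\tr_n(u;\partial\R^2_+)=0$, reaching the same contradiction.)
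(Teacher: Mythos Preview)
Your outline for (i), (ii) and the final ``no Lebesgue trace'' argument is correct and matches the paper's reasoning closely (the paper does (i) by directly computing $\int_{\R^2_+}u\cdot\nabla\varphi=\sum_{i,j}\int_{\partial Q_{i,j}}\varphi\,v_{i,j}\cdot n_{Q_{i,j}}=0$, which is the integrated form of your jump-checking; and the conclusion via \cref{T:strong trace equals weak} is identical). One cosmetic slip: on $\R^2_+$ one has $d_{\partial\R^2_+}(x,y)=y$, so $\nabla d_{\partial\R^2_+}=+e_2$, not $-e_2$; this does not affect anything since only $|u\cdot e_2|$ enters.

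The genuine gap is in (iii). Your ``more precisely'' step replaces the fundamental-domain idea by fitting a \emph{single} tile $Q_{i,j}$ inside $B_r^+((\bar x,0))$ and using $\int_{Q_{i,j}}|u\cdot e_2|=4^{-j}\int_Q|v\cdot e_2|$. This does yield \emph{some} positive constant $c$ (which is all you need for the ``in particular'' conclusion), but it cannot yield $c=\tfrac18$: for $Q_{i,j}$ to sit inside $B_r^+$ one needs at least $2\cdot 2^{-j}<r$ (the top edge must fit), hence $4^{-j}/r^2<\tfrac14$, and once you also control the horizontal extent for arbitrary $\bar x$ the bound drops further; in fact for $r$ just below $2^{-k}\sqrt{17}/4$ no single tile gives more than about $\tfrac{1}{17}$. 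So the claim ``chasing the dyadic bookkeeping one can arrange $c=\tfrac18$'' is not justified by the single-tile argument.

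The paper obtains exactly $\tfrac18$ by a different mechanism: for $2^{-k}\le r<2^{-k+1}$ it bounds $\tfrac1{r^2}\int_{B_r^+}$ from below by $4^{k-1}$ times the integral over the rectangle $[\bar x-2^{-k-1},\bar x+2^{-k-1}]\times[0,2^{-k-1}]\subset B_{2^{-k}}^+$, uses the $2^{-k-2}$-periodicity on that strip to replace it by $[0,2^{-k}]\times[0,2^{-k-1}]$, and then \emph{sums over all tiles $Q_{i,j}$ with $j\ge k+2$} contained in that rectangle. The resulting geometric series $\sum_{j\ge k+2}2^{j-k}\cdot 4^{-j}=2^{-2k-1}$ is what produces the factor $\tfrac18$. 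Your first sentence in (iii) (``a fixed fraction of a full fundamental domain's worth of mass'') is in fact pointing at this computation; the ``more precisely, one tile'' that follows is a weaker and inequivalent argument. To prove (iii) as stated you should carry out the infinite sum rather than the single-tile estimate.
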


\begin{proof} 
Since $v$ is bounded, the same holds for $u$. Given $\varphi\in C^1_c(\R^2)$ we compute
        \[
            \int_{\R^2_+} u(x,y)\cdot \nabla\varphi(x,y) \, dx \, dy 
            = \sum_{i,j\in\Z} \int_{Q_{i,j}} v_{i,j}\cdot \nabla\varphi \, dx \, dy 
        \]
        and since $v_{i,j}$ is divergence-free
        \[
             \int_{\R^2_+} u(x,y)\cdot \nabla\varphi(x,y) \, dx \, dy
             =  \sum_{i,j\in\Z} \int_{\partial Q_{i,j}} \varphi \, v_{i,j}\cdot n_{Q_{i,j}} \, d \mathcal{H}^1 =0,
        \]
        where in the last equality we have used that $v_{i,j}$ is tangent to $\partial Q_{i,j}$ since $v$ is tangent to $\partial Q$. This computation shows that $u$ is divergence-free (for this it would have been enough to test with $\varphi\in C^1_c(\R^2_+)$) and that the normal distributional trace vanishes according to \cref{D:ditrib norm trace}, thus proving $(i)$.

To check $(ii)$ it is enough to prove that the restriction of $u$ to the strip $\{2^{-k-1}\leq y<2^{-k}\}$ is $2^{-k-1}$-periodic in the first variable. This is evident, since a point $(x,y)$ in this strip belongs to some square $Q_{i,k+1}$ and hence
        \begin{align}
            u(x+2^{-k-1},y)&=v_{i+1,k+1}(x+2^{-k-1},y)\\
            &=v(2^{k+1}x-1-i+1,2^{k+1}y) \\
            &=v(2^{k+1}x-i,2^{k+1}y)\\
            &=v_{i,k+1}(x,y)\\
            &=u(x,y).
        \end{align}

We are left with $(iii)$. Notice that in this case $\nabla d_{\partial \R^2_+} (x,y)=e_2$ for any $(x,y)\in \R^2_+$. Hence, given $(\bar{x},0)\in \partial\R^2_+$, for $2^{-k}\leq r < 2^{-k+1}$ one can estimate
 \begin{align}
    \frac{1}{r^2} \int_{B^+_r((\bar{x},0))} |u\cdot e_2| \, dx \, dy &\geq 
    4^{k-1} \int_{B^+_{2^{-k}}((\bar{x},0))} |u\cdot e_2| \, dx \, dy\\
    &\geq 4^{k-1}  \int_{[\bar{x}-2^{-k-1},\bar{x}+2^{-k-1}]\times [0,2^{-k-1}]} |u\cdot e_2| \, dx \, dy.
  \end{align}
By exploiting the periodicity with respect to the first variable ($u$ is $2^{-k-2}$-periodic in $x$ in the strip $0<y<2^{-k-1}$), we get
\begin{align}
    4^{k-1}  \int_{[\bar{x}-2^{-k-1},\bar{x}+2^{-k-1}]\times [0,2^{-k-1}]} \abs{u\cdot e_2} \, dx \, dy &= 4^{k-1} \int_{[0,2^{-k}]\times[0,2^{-k-1}]} \abs{u\cdot e_2} \, dx \, dy \\
    &= 4^{k-1} \sum_{j\geq k+2} \sum_{i=0}^{2^{j-k}-1} \int_{Q_{i,j}} \abs{ v_{i,j}\cdot e_2} \, dx\, dy \\
    &= 4^{k-1} \sum_{j\geq k+2} \sum_{i=0}^{2^{j-k}-1} 4^{-j} \int_Q \abs{ v\cdot e_2} \, dx \, dy \\
    &= \int_Q \abs{v\cdot e_2} \, dx \, dy \sum_{j\geq k+2} 2^{k-j-2}\\
    &= \frac{1}{8} \int_Q \abs{v\cdot e_2} \, dx \, dy. 
\end{align}

thus proving \eqref{eq: liminf example}. To conclude, by \cref{T:strong trace equals weak}, we know that if $u$ admits a normal Lebesgue trace, then it has to vanish. In particular, if $v$ satisfies \eqref{cond strict positive}, by \eqref{eq: liminf example} we infer that $u$ does not admit a normal Lebesgue trace on $\partial \R^2_+$.
\end{proof}

\begin{remark}
For completeness we give an explicit example of a vector field $v$ satisfying the assumptions of \cref{l: example} and \eqref{cond strict positive}. Let us define 
\[
    v(x,y)=\Big(\sin(2\pi x)\cos(2\pi y) , - \sin(2\pi y)\cos(2\pi x)\Big).
\]
It is apparent that $v$ is divergence-free, tangent to $\partial Q$ and
\[
    \int_Q |v\cdot e_2| \, dx \,  dy = \int_0^1\int_1^2 |\sin(2\pi y)\cos(2\pi x)| \, dx \, dy = \frac{4}{\pi^2}>0.
\]
\end{remark}

\section{On the continuity equation on bounded domains}\label{S:Cont Eq}

We now give the precise definition of weak solutions to the continuity equation \eqref{CE}. We follow \cites{CDS14,CDS14bis}. For a regular solution $\rho$ to \eqref{CE}, testing the equation with $\varphi \in C^1(\overline{\Omega} \times [0,T))$, we get
\begin{equation}
\int_0^T \int_\Omega \rho (\partial_t \varphi + u \cdot \nabla \varphi) + (c\rho+f)\varphi \, dx\, dt  = \int_0^T \int_{\partial\Omega}\rho u \cdot n_\Omega \varphi \, d\mathcal{H}^{d-1} \,dt  -\int_\Omega \rho(x,0)\varphi(x,0)\,dx . \label{eq: int formulation CE boundary}
\end{equation}
We aim to give meaning to the integral formulation \eqref{eq: int formulation CE boundary} for rough solutions. To prescribe the boundary condition of $\rho$ on $\partial\Omega$, we exploit the theory of normal distributional traces. 
Let $u, \rho, c, f$ as in \cref{T:CE general} and let $\rho \in L^\infty(\Omega \times (0,T))$ be an \quotes{interior in $\Omega$} distributional solution to 
\begin{equation}\label{eq: CE without boundary}
    \left\{
    \begin{array}{rcll}
        \partial_t \rho + \div (\rho u ) &=& c \rho +f & \text{ in }\Omega\times (0,T) \\
        \rho(\cdot,0) &=&  \rho_0 & \text{ in }  \Omega,
    \end{array}
    \right.
\end{equation}
that is, we restrict \eqref{eq: int formulation CE boundary} to test functions in $C^\infty_c([0,T)\times \Omega)$. Thus, for the moment, we are not interested in the boundary datum on $\partial \Omega$. Moreover, setting $U=(u,1)$, we notice that the space-time divergence of $\rho U$ satisfies 
\begin{equation}\label{E:CE}
    \Div(\rho U) = \div(\rho u) + \partial_t \rho = c\rho+f,
\end{equation}
that is $\rho U \in \mathcal{MD}^\infty(\Omega\times (0,T))$ and, recalling \eqref{distr_norm_trace}, we can define
\[
    \tr_n(\rho U ; \partial(\Omega\times(0,T))) \in L^\infty(\partial(\Omega\times(0,T))).
\]

We remark that the latter is a space-time normal trace. Indeed, for any fixed time $t\in(0,T)$, the vector field $\rho (\cdot,t) u(\cdot,t)$ might fail to belong to $\mathcal{MD}^\infty(\Omega)$. Thus, we need to consider the restriction of the space-time trace to $\Lambda:=\partial\Omega \times (0,T)$, that is
\begin{equation}
    \tr_n(\rho u) := \tr_n(\rho U; \partial(\Omega\times(0,T)))\big|_{\Lambda}. \label{eq: tr rho u}
\end{equation}
Similarly, we define
\begin{equation}
    \tr_n(u) := \tr_n(U; \partial(\Omega\times(0,T)))\big|_{\Lambda}. \label{eq: tr u}
\end{equation}
In this case, since $ u(\cdot, t) \in \mathcal{MD}^\infty(\Omega)$ for a.e. $t$ (recall that we are assuming $\div u \in L^1(\Omega\times (0,T))$), the normal trace at fixed time of $u$ is well defined and it can be checked that $\tr_n(u(\cdot, t ); \partial \Omega) = \tr_n(u)(\cdot, t)$ for a.e. $t$ as functions in $L^\infty(\partial \Omega)$ (see \cref{l: measurability of space-time trace}). Since the outer normal to the set $\Omega\times(0,T)$ is given by the vector $(n_\Omega,0)$ at any point of $\Lambda$, then the traces $\tr_n(u)$ and $\tr_n(\rho u)$ describe the components of $u$ and $\rho u$ (respectively) which are normal to $\partial\Omega$. 

\begin{definition} \label{D:Gamma plus and minus} 
With the above notation, we define the space-time set where $u$ is not entering the domain $\Omega$ by 
\[
    \widetilde{\Gamma}^+:=\left\{(x,t)\in\Lambda\, :\, \tr_n(u)(x,t)\geq 0\right\}.
\] 
We define $\Gamma^+ := \spt(\H^d \llcorner \widetilde{\Gamma}^+)$.
Then, the set where $u$ is entering $\Omega$ is defined by $\Gamma^-:=\Lambda\setminus\Gamma^+ \subset \Lambda$.
\end{definition}

\begin{remark}
In \cref{D:Gamma plus and minus}, the set $\widetilde{\Gamma}^+$ is defined up to $\H^d$-negligible sets. Then, we replace it with the closed (in $\Lambda$) representative given by $\Gamma^+$ for convenience. Hence, the set $\Gamma^-$ is open in $\Lambda$. We remark that $\Gamma^+$ is independent of the choice of the representative of $\tr_n(u)$. In view of the assumption $(i)$ in \cref{T:CE general}, if we set $\Gamma^{\pm }$ as in \cref{D:Gamma plus and minus}, $\Gamma^-$ is the smallest subset of $\Lambda$ around which additional regularity of the vector field is assumed. 
\end{remark}

We notice that in the regular setting, we would have
\[
    \rho=\frac{(\rho u)\cdot n_\Omega}{u\cdot n_\Omega} = \frac{\tr_n(\rho u)}{\tr_n(u)},
\]
provided $u\cdot n\neq 0$. Then, the idea is to impose the validity of the above formula to make sense of the boundary datum in the rough setting. Motivated by the discussion above, we recall the definition of weak solution to the initial-boundary value problem \eqref{CE} from \cites{CDS14,CDS14bis}. 

\begin{definition} \label{D:Cont Eq weak sol} 
Given $u,c,f,\rho_0,g$ as in \cref{T:CE general}, we say that $\rho$ solves the continuity equation \eqref{CE} with boundary condition $\rho=g$ on $\Gamma^-$ and initial datum $\rho_0$, if for any $\varphi\in C^1_c\left(\overline{\Omega}\times[0,T)\right)$
\[
    \int_0^T \int_\Omega \rho (\partial_t \varphi + u \cdot \nabla \varphi) + (c\rho+f)\varphi \, dx\, dt  = \int_0^T \int_{\partial\Omega}\tr_n(\rho u) \varphi \, d\mathcal{H}^{d-1} \,dt -\int_\Omega \rho_0(x)\,\varphi(x,0)\,dx 
    \]
and $\tr_n(\rho u)=g \tr_n(u)$ on $\Gamma^-$, where $\tr_n(\rho u), \tr_n(u)$ and $\Gamma^-$ are defined by \eqref{eq: tr rho u}, \eqref{eq: tr u} and \cref{D:Gamma plus and minus}.
\end{definition}

By the interior renormalization property established in \cite{Ambr04}, under the assumptions of \cref{T:CE general}, for any choice of $\beta\in C^1(\R)$, in the above notation, we have that 
\begin{equation} \label{eq: renormalized equation}
    \Div(\beta(\rho) U) = \div(\beta(\rho) u) + \partial_t (\beta(\rho)) = c\rho\beta'(\rho)+f\beta'(\rho)+(\beta(\rho)-\rho\beta'(\rho))\div u,
\end{equation} 
in $\mathcal{D}'( \Omega\times (0,T))$. Hence,  $\beta(\rho) U\in \mathcal{MD}^\infty (\Omega\times(0,T))$ and we set
\begin{equation} \label{eq: tr beta(rho) u}
    \tr_n(\beta(\rho) u) := \tr_n(\beta(\rho) U;\partial(\Omega\times(0,T)))\big|_\Lambda
    \in L^\infty (\Lambda).
\end{equation}
We prove a chain rule for the distributional normal trace of weak solutions to \eqref{CE}. The proof follows closely that of \cite{ACM05}*{Theorem 4.2}  and \cite{BB20}*{Proposition 5.10}  and it relies on the Gagliardo extension theorem \cite{Ga57} (see also \cite{Le17}*{Theorem 18.15} for a modern reference).  

\begin{proposition} \label{P:chain-rule} 
Under the assumptions of \cref{T:CE general}, for any $\beta \in C^1(\R)$ let  $\tr_n(\rho u)$, $\tr_n(u)$ and $\tr_n(\beta(\rho) u)$ be defined by \eqref{eq: tr rho u}, \eqref{eq: tr u} and \eqref{eq: tr beta(rho) u} respectively. Then, it holds that
\begin{equation}
\tr_n(\beta(\rho) u) =  \beta\left(\frac{\tr_n(\rho u)}{\tr_n(u)}\right) \tr_n(u)  \qquad \text{ in } O\cap\Lambda, \label{eq: chain rule formula}
\end{equation}
where the term $\beta\left(\frac{\tr_n(\rho u)}{\tr_n(u)}\right)$ is arbitrarily defined in the set where $\tr_n(u) = 0$. 
\end{proposition}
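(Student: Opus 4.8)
The plan is to localize the identity \eqref{eq: chain rule formula} to a chain rule for normal traces of a vector field that is $BV$ up to the boundary, and then to prove the latter by extending $u$ and $\rho$ across $\partial\Omega$ and applying the interior renormalization theorem. Since every term in \eqref{eq: chain rule formula} is local on $\Lambda$, it suffices to prove the identity on $\Lambda_V:=V\cap\Lambda$ for an arbitrary space-time ball $V$ with $\overline V\subset O$; on such a $V$ hypothesis $(i)$ of \cref{T:CE general} ensures that $u_t$ is $BV$ up to $\Lambda_V$, and we work on the bounded Lipschitz domain $\Omega_V:=V\cap(\Omega\times(0,T))$, whose relevant boundary portion is $\Lambda_V$. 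I would first establish the a priori bound $|\tr_n(\beta(\rho)u)|\le\|\beta(\rho)\|_{L^\infty}|\tr_n(u)|$ on $\Lambda_V$ (and, with $\beta=\mathrm{id}$, the same for $\rho$), which makes the right-hand side of \eqref{eq: chain rule formula} meaningful: writing the distributional normal trace of $\beta(\rho)U$ as the limit of averages over thin interior tubular strips — exactly via the cutoff mechanism used in the proof of \cref{T:strong trace equals weak}, which is legitimate because $\Div(\beta(\rho)U)\in L^1$ by \eqref{eq: renormalized equation} and so puts no mass on $\Lambda_V$ — one estimates $|\beta(\rho)|$ by its supremum and uses that for a.e.\ $t$ the bounded field $u_t$ (which lies in $BV_{\rm loc}(O_t)$) achieves its normal trace in the Lebesgue sense on $O_t\cap\partial\Omega$, with $|u_t\cdot\nabla d_{\partial\Omega}|$ having Lebesgue trace $|\tr_n(u)(\cdot,t)|$ there (localized versions of \cref{R: BV vs normal trace}, \cref{T:strong trace equals weak} and \cref{P:strong trace convergence abs value}), and concludes by Fubini and dominated convergence in $t$. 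Thus $\rho^\star:=\tr_n(\rho u)/\tr_n(u)$, set to $0$ where $\tr_n(u)=0$, belongs to $L^\infty(\Lambda_V)$ with $\|\rho^\star\|_{L^\infty}\le\|\rho\|_{L^\infty}$ and satisfies $\rho^\star\,\tr_n(u)=\tr_n(\rho u)$ on $\Lambda_V$ (both sides vanishing where $\tr_n(u)=0$).

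Next, by the Gagliardo extension theorem \cite{Ga57} I would pick $\bar\rho\in W^{1,1}(\Omega_V)$ with boundary trace $\rho^\star$ on $\Lambda_V$; a truncation at level $\|\rho\|_{L^\infty}$ leaves the trace unchanged, so we may assume $\bar\rho\in W^{1,1}(\Omega_V)\cap L^\infty(\Omega_V)$. Since $\div u\in L^1(\Omega_V)$, \cref{l: product rule}, applied to the space-time field $U=(u,1)$ and to the Sobolev functions $\bar\rho$ and $\beta(\bar\rho)$, gives $\tr_n(\bar\rho u)=\rho^\star\tr_n(u)=\tr_n(\rho u)$ and $\tr_n(\beta(\bar\rho)u)=\beta(\rho^\star)\tr_n(u)$ on $\Lambda_V$; the latter is exactly the right-hand side of \eqref{eq: chain rule formula} (and equals $0$ where $\tr_n(u)=0$, consistently with the stated convention). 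The whole proposition is therefore reduced to the claim
\[
\tr_n(\beta(\rho)u)=\tr_n(\beta(\bar\rho)u)\qquad\text{on }\Lambda_V ,
\]
i.e.\ that the normal trace of $\beta$ of a renormalized solution depends on the solution only through the boundary trace $\rho^\star$.

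To prove this I would glue across $\Lambda_V$. Using that $u_t$ is $BV$ up to $\Lambda_V$, extend $u$ to $\hat u$ on $V$ with $\hat u_t\in BV_{\rm loc}(V_t)$, $\div\hat u\in L^1((0,T);L^\infty)$, and with the normal component of $\hat u$ continuous across $\Lambda_V$ (so that its exterior normal trace equals $\tr_n(u)$); extend $\bar\rho$ to $\hat{\bar\rho}\in W^{1,1}(V)\cap L^\infty(V)$; and set $\hat\rho:=\rho$ on $\Omega_V$ and $\hat\rho:=\hat{\bar\rho}$ on $V\setminus\overline{\Omega_V}$. The choice of $\hat u$, together with $\tr_n(\rho u)=\rho^\star\tr_n(u)$, makes the two boundary terms in the gluing formula of \cref{l: gluing lemma} cancel, so $\hat\rho\hat U$ has no singular part of its divergence on $\Lambda_V$; hence $\hat\rho$ solves a transport equation on $V$ with vector field $\hat u$ and right-hand side in $L^1(V)$, and the same is trivially true of $\hat{\bar\rho}$. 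Ambrosio's renormalization theorem \cite{Ambr04}, applied on $V$ to the $\R^2$-valued transport solution $(\hat\rho,\hat{\bar\rho})$ (whose renormalization uses, as usual, the commutator estimate \cref{L:commutator}), then shows that $\beta(\hat\rho)-\beta(\hat{\bar\rho})$ is a renormalized solution on $V$, so $\Div\big((\beta(\hat\rho)-\beta(\hat{\bar\rho}))\hat U\big)\in L^1(V)$. But $\hat\rho=\hat{\bar\rho}$ outside $\Omega_V$, so $\beta(\hat\rho)-\beta(\hat{\bar\rho})=(\beta(\rho)-\beta(\bar\rho))\mathds{1}_{\Omega_V}$ is the extension by zero of $\beta(\rho)-\beta(\bar\rho)$; applying \cref{l: gluing lemma} to this field — which is $\mathcal{MD}^\infty$ on $\Omega_V$ and $0$ outside — its divergence equals an $L^1(\Omega_V)$ term minus $\tr_n\big((\beta(\rho)-\beta(\bar\rho))u\big)\mathcal{H}^d\llcorner\Lambda_V$. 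As this divergence is absolutely continuous, the singular term must vanish, i.e.\ $\tr_n(\beta(\rho)u)=\tr_n(\beta(\bar\rho)u)$ on $\Lambda_V$, completing the proof.

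I expect the third paragraph to be the crux: building the extension $\hat u$ of a field that is only spatially $BV$ so that no singular divergence is produced on $\Lambda_V$, checking that the glued functions still satisfy transport equations across $\Lambda_V$ with $L^1$ sources, and verifying that Ambrosio's theorem genuinely applies on the neighbourhood $V$. This is exactly the point where hypothesis $(i)$ of \cref{T:CE general} enters in an essential way, and it explains why \eqref{eq: chain rule formula} is claimed only on $O\cap\Lambda$ rather than on all of $\Lambda$.
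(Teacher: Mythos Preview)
Your proposal is correct and rests on the same three ingredients as the paper's proof: the $L^\infty$ bound $\|\rho^\star\|_{L^\infty}\le\|\rho\|_{L^\infty}$, a Gagliardo extension matching traces, and Ambrosio's renormalization (equivalently the commutator estimate) applied across $\Lambda$ once the glued divergence has no singular part there. The paper's route is slightly more direct: it extends both $u$ and $\rho^\star$ only to the \emph{exterior} (matching the full $BV$ trace of $u$ so that $|\nabla\tilde u_t|(\partial\Omega\cap O_t)=0$), shows $|\Div(\beta(\tilde\rho)\tilde U)|(\Lambda\cap O)=0$ via the commutator bound, and then reads off the inner trace as minus the outer one, which is computable because everything outside is Sobolev. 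Your variant introduces the interior Sobolev surrogate $\bar\rho$ and reduces \eqref{eq: chain rule formula} to the statement $\tr_n(\beta(\rho)u)=\tr_n(\beta(\bar\rho)u)$; to prove this you still end up extending across $\Lambda_V$ and invoking Ambrosio on the enlarged domain, so the exterior extension is needed either way. One small sharpening: for your invocation of Ambrosio's theorem on $V$ it is not enough that ``the normal component of $\hat u$ is continuous across $\Lambda_V$''; you need $\hat u_t\in BV_{\rm loc}(V_t)$ with $\nabla\hat u_t\otimes dt\in\mathcal M_{\rm loc}(V)$ and $\div\hat u\in L^1$, which is most easily arranged (as the paper does) by a Gagliardo extension matching the \emph{full} $BV$ trace of $u_t$ on $O_t\cap\partial\Omega$, so that in fact $|\nabla\hat u_t|(\Lambda_V)=0$. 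With that construction your argument goes through.
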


\begin{proof} 
For the sake of clarity, we split the proof in several steps. 

\textsc{\underline{Step 0}: Assume $W^{1,1}$ regularity.} First, assume in addition that both $\rho$ and $u$ enjoy $W^{1,1}(\Omega\times(0,T))$ regularity. In this case, $\beta(\rho) \in W^{1,1}(\Omega \times (0,T))$ and the normal traces of $u,\rho u, \beta(\rho) u$ can be computed explicitly by \cref{l: product rule}. Thus, \eqref{eq: chain rule formula} follows immediately. In the rest of the proof, we show how to extend the vector fields $u$ and $\rho u$ from $\Omega\times(0,T)$ to $\R^d\times(0,T)$ in such a way that all the traces on the set $\Lambda\cap O$ from inside and outside coincide. Thus, we can compute the inner traces relying on the (stronger) outer ones.  

\textsc{\underline{Step 1}: Extension of $U$.}
By \cref{l: measurability of space-time trace}, we find a measurable function $\tr(u) \in L^\infty(\Lambda \cap O;\R^{d})$ such that $\tr(u)(\cdot, t) = u_t^\Omega$ as elements in $L^\infty(\partial \Omega \cap O_t)$, for a.e. $t \in (0,T)$. Here, $u_t^{\Omega}$ denotes the full $BV$ trace of $u(\cdot, t)$ on $\partial \Omega \cap O_t$. Then, with a slight abuse of notation, we define $U^{\Omega \times (0,T)} \in L^\infty(\Lambda;\R^{d+1})$ such that 
\begin{equation}
    U^{\Omega \times (0,T)}(x,t) = \begin{cases}
        (u_t^\Omega(x), 1) & \text{ if } x \in \partial \Omega \cap O_t
        \\ (0,1) & \text{ if } x \in \partial \Omega \cap (O_t)^c. 
    \end{cases}
\end{equation}
Since the trace operator is surjective from $W^{1,1}$ to $L^1$, by Gagliardo's theorem, we find a vector field $V\in W^{1,1}(\Omega^c\times(0,T);\R^{d+1})$ such that $V^{\Omega^c\times(0,T)}=U^{\Omega\times(0,T)}$. Since $U^{\Omega \times (0,T)} \in L^\infty(\Lambda)$, by a truncation argument we also have $V \in L^\infty(\Omega^c \times (0,T))$. Moreover, by the property of the Sobolev trace (see \cref{T:trace_in_BV}), it is immediate to see that $V$ can be taken of the form $V = (v,1)$. Thus, we define the extension of $U$ as
\begin{equation}
\tilde{U}(x,t)=
\begin{cases}
    U(x,t)& \text{if } (x,t)\in \Omega\times(0,T) \\
                     V(x,t)& \text{if } (x,t)\in \Omega^c\times(0,T).
                  \end{cases}
\end{equation}
We have that $\tilde U(x,t) = (\tilde u_t(x) ,1 )$, where 
\begin{equation}
    \tilde u_t(x) = \begin{cases}
        u_t(x) & \text{ if } x \in \Omega, 
        \\ v_t(x) & \text{ if } x \in \Omega^c. 
    \end{cases}
\end{equation}
Then, for almost every $t\in(0,T)$, by \cite{AFP00}*{Theorem 3.84} we have $\tilde{u}_t\in BV_{\rm loc}(O_t)$ and
\begin{equation}
    \abs{ \nabla \tilde{u}_t } (\partial\Omega\cap O_t) = \abs{ u_t^\Omega-v_t^{\Omega^c} }   \H^{d-1} (\partial\Omega\cap O_t) = 0. \label{E:zero mass boundary}
\end{equation}

\textsc{\underline{Step 2}: Extension of $\rho$.} Consider the function defined on $\Lambda$ by
\begin{equation}
    \theta = \frac{\tr_n(\rho u)}{\tr_n(u)} \mathbbm{1}_{\tr_n(u)\neq 0}.
\end{equation}
With the same argument as in the proof of \cite{ACM05}*{Theorem 4.2}, we have 
$$
\norm{\theta}_{L^\infty(\Lambda)}\leq \norm{\rho}_{L^\infty(\Omega\times (0,T))}.
$$ 
Therefore, again by Gagliardo's extension theorem, we find $\sigma\in W^{1,1}\cap L^\infty(\Omega^c\times(0,T))$ such that $\sigma^{\Omega^c\times(0,T)}=\theta$. Since $\sigma$ and  $V$ are space-time Sobolev, by \eqref{eq: product rule trace} and \cref{R: BV vs normal trace}, we have that 
\begin{equation}
    \tr_n(\sigma V, \partial(\Omega^c\times(0,T))) = \sigma^{\Omega^c\times(0,T)} V^{\Omega^c\times(0,T)} \cdot n_{\Omega^c} = -\theta \tr_n(u) = -\tr_n(\rho u) \quad \text{ on } \Lambda. \label{eq: bilateral trace sigma V}
\end{equation}
Then, setting
\begin{equation}
\tilde{\rho}(x,t)=
\begin{cases}
    \rho(x,t)& \text{if } (x,t)\in \Omega\times(0,T) \\
    \sigma(x,t)& \text{if } (x,t)\in \Omega^c\times(0,T)
\end{cases}
\end{equation}
and noticing that $\tilde \rho \tilde U \in \mathcal{MD}^\infty(\Omega \times (0,T)) \cup \mathcal{MD}^\infty(\Omega^c \times (0,T))$, by \cref{l: gluing lemma} we have that $\tilde \rho \tilde U \in \mathcal{MD}^\infty(\R^d\times (0,T))$ and by \eqref{eq: formula gluing} and \eqref{eq: bilateral trace sigma V} it holds
\begin{equation} \label{E:no mass 2}
    \abs{\Div(\tilde{\rho}\tilde{U})}(\Lambda)=0.
\end{equation}

\textsc{\underline{Step 3}: Proof of $\beta(\tilde{\rho})\tilde{U} \in \mathcal{MD}^\infty(\R^d\times (0,T))$.} By Ambrosio's renormalization theorem for $BV$ vector fields \cite{Ambr04}, we have that \eqref{eq: renormalized equation} holds in $\mathcal{D}'(\Omega \times(0,T))$. Hence, it is clear that $\beta(\tilde \rho) \tilde U \in \mathcal{MD}^\infty(\Omega \times (0,T))$. Moreover, since $\tilde \rho, \tilde U \in W^{1,1} \cap L^\infty (\Omega^c \times (0,T))$, we get $\beta(\tilde \rho) \tilde U \in \mathcal{MD}^\infty(\Omega^c \times (0,T))$ as well. Thus, by \cref{l: gluing lemma}, we infer that $\beta(\tilde \rho) \tilde U \in \mathcal{MD}^\infty(\R^d \times (0,T))$. We claim that 
\begin{equation} \label{eq: div renormalized does not weight lambda}
    \abs{\Div(\beta(\tilde{\rho})\tilde{U})} (\Lambda\cap O) = 0. 
\end{equation}
Given a space mollifier $\eta\in C^\infty_c(\R^d)$, denote by $\phi_\eps=\phi*\eta_\eps$ the space regularization of a function $\phi$. Then, since $\tilde U \in \mathcal{MD}^\infty( \R^d \times (0,T))$ (see \cref{l: gluing lemma}) and $\tilde \rho_\e$ is smooth in the spatial variable, we compute
\begin{align}
\Div (\beta(\tilde{\rho}_\eps) \tilde{U}) & = \beta(\tilde{\rho}_\eps) \Div \tilde{U} + \tilde{U} \cdot \nabla_{(x,t)} \beta(\tilde{\rho}_\e) \\
&= \beta(\tilde{\rho}_\e) \div \tilde{u} + \tilde{u} \cdot \nabla \beta(\tilde{\rho}_\e) + \partial_t \beta(\tilde{\rho}_\e)
\\ & = \beta(\tilde{\rho}_\e) \div \tilde{u} + \beta'(\tilde{\rho}_\e) \tilde{u} \cdot \nabla \tilde{\rho}_\e + \partial_t \beta(\tilde{\rho}_\e),
\end{align}
where the above identities hold in $\mathcal{D}'(\R^d \times (0,T))$. Since $\tilde U = (\tilde u, 1)$, by \eqref{E:CE}, \cref{l: gluing lemma} and \eqref{E:no mass 2} we get 
\begin{align}
    \partial_t \tilde{\rho}_\e & = \Div(\tilde{\rho}\tilde{U})_\e - \div(\tilde{\rho}\tilde{u})_\e \\
    &=\left( \mathbbm{1}_{\Omega\times(0,T)}\Div(\rho U)+\mathbbm{1}_{\Omega^c\times(0,T)}\Div(\sigma V)\right)_\e - \div(\tilde{\rho}\tilde{u})_\e 
    \\ & = \left( \mathbbm{1}_{\Omega\times(0,T)}(c\rho+f)+\mathbbm{1}_{\Omega^c\times(0,T)}(\sigma\Div V + V \cdot \nabla_{(x,t)}\sigma)\right)_\e - \div(\tilde{\rho}\tilde{u})_\e.
\end{align}
Therefore, we have that $\partial_t \tilde{\rho}_\e \in L^1(\R^d \times (0,T))$ and, by the chain rule for Sobolev functions, we deduce $\partial_t\beta(\tilde{\rho}_\e)=\beta'(\tilde{\rho}_\e)\,\partial_t \tilde{\rho}_\e \in L^1(\R^d \times (0,T))$. To summarize, we have shown 
\begin{equation}
    \Div (\beta(\tilde{\rho}_\e)\tilde{U}) = \beta(\tilde{\rho}_\e) \div \tilde{u} + \beta'(\tilde{\rho}_\e)\Div(\tilde{\rho}\tilde{U})_\e+\beta'(\tilde{\rho}_\e) \left(\tilde{u} \cdot \nabla \tilde{\rho}_\e - \div(\tilde{\rho}\tilde{u})_\e\right) \in L^1(\R^d \times (0,T)). 
\end{equation}
Now, let $O'\subset O$ be an open set. Pick any $\varphi\in C^0_c(O')$, $|\varphi|\leq 1$. Since $\beta(\tilde{\rho}_\eps) \tilde{U} \rightarrow\beta(\tilde{\rho})\tilde{U}$ in $L^1_{\rm loc}(\R^d \times [0,T])$ and $\div \tilde u, \Div (\tilde \rho \tilde U) \in \mathcal{M}(\R^d \times (0,T))$,  we deduce 
\begin{align}
    &\abs{\int  \varphi \,d\Div(\beta(\tilde \rho)\tilde U )} = \lim_{\e \to 0} \abs{ \int \varphi \,d\Div(\beta(\tilde{\rho}_\e)\tilde{U}) }
    \\ &  \leq C \left( \abs{\div \tilde{u}}(O') + \abs{\Div(\tilde{\rho} \tilde{U})} (O') + \int_0^T \limsup_{\e \to 0} \norm{\tilde{u}_t \cdot \nabla (\tilde{\rho}_t)_\eps-\div(\tilde{\rho}_t \tilde{u}_t)_\eps}_{L^1((\spt \varphi_t)_\e)} \, dt \right). 
\end{align}
The commutator is estimated by \cref{L:commutator} as
\begin{align}
    \int_0^T \limsup_{\eps\to 0} \norm{\tilde u_t \cdot \nabla  (\tilde\rho_t)_\eps-\div(\tilde\rho_t \tilde u_t)_\eps}_{L^1((\spt \varphi_t)_\e)} \, dt & \leq C \int_0^T \norm{\tilde\rho_t}_{L^{\infty}(\R^d)} \, \abs{\nabla \tilde u_t}\left( \spt \varphi_t\right)\, dt 
    \\ & \leq C \norm{\tilde \rho}_{L^\infty(\Omega \times (0,T) )} \left(\abs{\nabla \tilde u_t}\otimes dt\right)\,\left(O'\right). 
\end{align}
To summarize, since $\varphi\in C^0_c(O')$ is arbitrary, we deduce
\begin{equation}
\abs{\Div(\beta(\tilde{\rho})\tilde{U})} \leq C \left(\abs{\div \tilde{u}}+\abs{\Div(\tilde{\rho} \tilde{U})}+ \ \abs{\nabla \tilde u_t}\otimes dt \right)
\end{equation}
as measures in $O$, and \eqref{eq: div renormalized does not weight lambda} follows by \eqref{E:zero mass boundary} and \eqref{E:no mass 2}. 

\textsc{\underline{Step 4}: Conclusion.} By \eqref{eq: div renormalized does not weight lambda} and \eqref{eq: formula gluing} we infer that 
\begin{equation}
\tr_n\left(\beta(\tilde{\rho})\tilde{U}; (\partial \Omega\times(0,T) ) \cap O  \right) = -\tr_n \left(\beta(\tilde{\rho})\tilde{U}; (\partial \Omega^c\times(0,T)) \cap O \right).
\end{equation}
Therefore, exploiting Step 0 on the set $O\cap \Omega^c\times(0,T)$, we conclude    
\begin{align}
   \tr_n(\beta(\rho)u) & = \tr_n\left(\beta(\tilde{\rho})\tilde{U};(\partial \Omega\times(0,T)) \cap O \right) 
   \\ & = -\tr_n\left(\beta(\tilde{\rho})\tilde{U};(\partial \Omega^c\times(0,T))\cap O \right) 
   \\ & = -\tr_n\Big(\beta(\sigma)V;(\partial \Omega^c\times(0,T))\cap O \Big)
   \\ & = - \beta\left( \frac{\tr_n(\sigma V;(\partial \Omega^c\times(0,T))\cap O)}{\tr_n(V;(\partial \Omega^c\times(0,T))\cap O)} \right) \tr_n\Big(V;(\partial \Omega^c\times(0,T))\cap O\Big) 
   \\ & = - \beta\left( \frac{\tr_n\left(\tilde{\rho}\tilde{U};(\partial \Omega^c\times(0,T))\cap O\right)}{\tr_n\left(\tilde{U};(\partial \Omega^c\times(0,T))\cap O\right)} \right) \tr_n\left(\tilde{U};(\partial \Omega^c\times(0,T))\cap O\right) 
   \\ & = \beta\left( \frac{\tr_n(\rho u)}{\tr_n(u)} \right) \tr_n(u).
\end{align}
\end{proof}

We are ready to state and prove the main result of this section. 

\begin{theorem} \label{T:CE particular}
Let $\Omega\subset \R^d$ be a bounded open set with Lipschitz boundary and let $u \in L^\infty(\Omega\times (0,T))\cap L^1_{\rm loc}([0,T);BV_{\rm loc}(\Omega))$ be a vector field such that $\div u \in L^1(\Omega\times (0,T))$. Let $\Gamma^-,\Gamma^+\subset \Lambda$ be as in \cref{D:Gamma plus and minus} and assume that $u$ satisfies the following conditions:
\begin{itemize}
    \item[(i)] there exists an open set $O \subset \R^{d}\times (0,T)$ such that $\Gamma^- \subset O$, $u_t \in BV_{  \rm loc}(O_t)$ for a.e. $t\in (0,T)$ and $ \nabla u_t \otimes dt \in \mathcal{M}_{\rm loc} (O)$; 
    \item[(ii)] for a.e. $t\in (0,T)$ it holds  
    \begin{equation}
    \label{hp: gamma out exit section}
    \lim_{r\rightarrow 0} \frac{1}{r} \int_{(\Gamma^+_t)_{r}^{\rm in}} (u_t\cdot \nabla d_{\partial \Omega})_+ \,dx=0. 
    \end{equation}
\end{itemize}
Assume moreover that $c,f\in L^1(\Omega\times (0,T))$, $\rho_0 \in L^\infty (\Omega)$ and $g \in L^\infty(\Gamma^-)$.  Let $\rho \in L^\infty(\Omega\times (0,T))$ be any distributional solution to \eqref{CE} in the sense of \cref{D:Cont Eq weak sol}. Then, for any  $\beta\in C^1(\R)$, it holds
\begin{align}
&\int_0^T \int_\Omega \beta(\rho ) \Big( \partial_t \varphi  + u\cdot \nabla \varphi\Big) \,dx \, dt + \int_0^T \int_\Omega \Big(\left(\beta(\rho)- \rho \beta'(\rho)\right)\div u + (c\rho + f)\beta'(\rho) \Big)\varphi\,dx\,dt\\
&= - \int_\Omega \beta(\rho_0) \varphi(x,0) \,dx  + \int_{\Gamma^-}\varphi\beta(g) \tr_n(u)\,d\mathcal L^T_{\partial \Omega}+\langle \mathcal{B}_\beta [u,\rho],\varphi \rangle, \label{ren_formula}
\end{align}
 for all $\varphi\in C^1_c(\overline \Omega\times [0,T))$, where 
$$
\langle \mathcal{B}_\beta [u,\rho],\varphi\rangle= \lim_{r\rightarrow 0} \frac{1}{r}\int_0^T \int_{(\partial \Omega)_{r}^{\rm in}}\varphi \beta(\rho) (u_t\cdot \nabla d_{\partial \Omega})_-  \,dx \, dt.
$$
Moreover, if we assume in addition that $(\div u-2c)_-\in L^1((0,T);L^\infty(\Omega))$, then $\rho$ is unique.
\end{theorem}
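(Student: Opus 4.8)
The plan is to first prove the renormalization identity \eqref{ren_formula} for an arbitrary $\beta\in C^1(\R)$, and then read off uniqueness by specializing to $\beta(s)=s^2$.

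\textbf{Step 1: the renormalization formula.} Since $u\in L^\infty(\Omega\times(0,T))\cap L^1_{\rm loc}([0,T);BV_{\rm loc}(\Omega))$ and $\div u_t\in L^1(\Omega)$ for a.e.\ $t$ (hence absolutely continuous), the interior renormalization theorem of \cite{Ambr04} gives, for every $\beta\in C^1(\R)$, that $\beta(\rho)U\in\mathcal{MD}^\infty(\Omega\times(0,T))$ (with $U=(u,1)$) and
\[
\Div(\beta(\rho)U)=R_\beta:=(\beta(\rho)-\rho\beta'(\rho))\div u+(c\rho+f)\beta'(\rho)\in L^1(\Omega\times(0,T)),
\]
and moreover $\rho$, being renormalized, has a representative in $C([0,T);L^1_{\rm loc}(\Omega))$ attaining $\rho_0$, so that $\beta(\rho)\in C([0,T);L^1_{\rm loc}(\Omega))$ attains $\beta(\rho_0)$ at $t=0$. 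Writing the Gauss--Green identity \eqref{distr_norm_trace} for $\beta(\rho)U$ on the cylinder $\Omega\times(0,T)$ against $\varphi$ (which vanishes near $\{t=T\}$), the boundary contribution on $\overline\Omega\times\{0\}$ equals $-\int_\Omega\beta(\rho_0)\varphi(\cdot,0)\,dx$, so the left-hand side of \eqref{ren_formula} equals $-\int_\Omega\beta(\rho_0)\varphi(\cdot,0)\,dx+\int_\Lambda\varphi\,\tr_n(\beta(\rho)u)\,d\mathcal L^T_{\partial\Omega}$. Testing the same identity against $\varphi\chi_r$ (with $\chi_r$ as in \eqref{eq:cut_off}, which vanishes on $\Lambda$), using $\nabla_{(x,t)}(\varphi\chi_r)=\chi_r\nabla_{(x,t)}\varphi+\varphi(\nabla\chi_r,0)$ and $\chi_r\to1$ pointwise on $\Omega$, forces the tubular term to converge and yields $\int_\Lambda\varphi\,\tr_n(\beta(\rho)u)\,d\mathcal L^T_{\partial\Omega}=-L$, where
\[
L:=\lim_{r\to0}\frac1r\int_0^T\int_{(\partial\Omega)_r^{\rm in}}\varphi\,\beta(\rho)\,u_t\cdot\nabla d_{\partial\Omega}\,dx\,dt .
\]

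\textbf{Step 2: splitting the boundary term over $\Gamma^\pm$.} Decomposing $u_t\cdot\nabla d_{\partial\Omega}=(u_t\cdot\nabla d_{\partial\Omega})_+-(u_t\cdot\nabla d_{\partial\Omega})_-$, the negative part produces exactly $\langle\mathcal B_\beta[u,\rho],\varphi\rangle$ by definition, so $L=L_+-\langle\mathcal B_\beta[u,\rho],\varphi\rangle$ with $L_+:=\lim_r r^{-1}\int_0^T\!\int_{(\partial\Omega)_r^{\rm in}}\varphi\beta(\rho)(u_t\cdot\nabla d_{\partial\Omega})_+$, and \eqref{ren_formula} reduces to the single claim $L_+=-\int_{\Gamma^-}\varphi\,\beta(g)\,\tr_n(u)\,d\mathcal L^T_{\partial\Omega}$. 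To prove it, split $(\partial\Omega)_r^{\rm in}=(\Gamma^+_t)_r^{\rm in}\sqcup\big((\partial\Omega)_r^{\rm in}\setminus(\Gamma^+_t)_r^{\rm in}\big)$ at each time. On $(\Gamma^+_t)_r^{\rm in}$ the positive part is killed by hypothesis \eqref{hp: gamma out exit section}, and the time-integrated contribution vanishes by dominated convergence, using the uniform bound $r^{-1}\mathcal H^d((\partial\Omega)_r^{\rm in})\le C$ from \cref{p: minkowski federer} and \cref{C:conv one side mink}. On the complementary set every point projects, under $\pi_{\partial\Omega}$, onto $\Gamma^-_t$; there $u_t\in BV$ by hypothesis $(i)$ and, since $\tr_n(u)<0$ a.e.\ on $\Gamma^-$, the negative part $(u_t\cdot\nabla d_{\partial\Omega})_-$ does not contribute, by \cref{C:NLT pos and neg}, so what survives is $r^{-1}\int\varphi\beta(\rho)u_t\cdot\nabla d_{\partial\Omega}$ over that region, whose limit is $-\int_{\Gamma^-}\varphi\,\tr_n(\beta(\rho)u)$ by the Gauss--Green identity of Step 1 (localized near $\Gamma^-$), and this equals $-\int_{\Gamma^-}\varphi\,\beta(g)\tr_n(u)$ by the chain rule \cref{P:chain-rule} combined with the boundary condition $\tr_n(\rho u)=g\,\tr_n(u)$ on $\Gamma^-\subset O\cap\Lambda$ (\cref{D:Cont Eq weak sol}). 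I expect the main obstacle here to be the genuinely geometric point that $\Gamma^-$ is only relatively open in $\Lambda$, its boundary lying inside $\Gamma^+$ where no $BV$ information is available, so the tube does not split cleanly along $\Gamma^\pm$; this is handled by exhausting $\Gamma^-$ with compact sets $K_n\nearrow\Gamma^-$ with $\overline{K_n}\subset O$, applying \cref{C:NLT pos and neg} and the localized Gauss--Green identity on the portion of the tube projecting onto $K_n$, and estimating in measure the portion projecting onto $\Gamma^-\setminus K_n$ (and that lying in $(\Gamma^+_t)_r^{\rm in}$) by the one-sided Minkowski content bounds (\cref{p: minkowski federer}, \cref{C:conv one side mink}), obtaining an error $\lesssim\mathcal L^T_{\partial\Omega}(\Gamma^-\setminus K_n)\to0$; a $\limsup$ over $r$ followed by $n\to\infty$ then yields the claim.

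\textbf{Step 3: uniqueness.} Assume in addition $(\div u-2c)_-\in L^1((0,T);L^\infty(\Omega))$ and let $\rho:=\rho_1-\rho_2$ be the difference of two solutions; by linearity of the distributional trace, $\rho$ solves \eqref{CE} with $f\equiv0$, $g\equiv0$, $\rho_0\equiv0$ (so $\tr_n(\rho u)=0$ on $\Gamma^-$), the sets $\Gamma^\pm$ being unchanged since they depend only on $u$. Apply \eqref{ren_formula} with $\beta(s)=s^2$, for which $\beta(\rho)-\rho\beta'(\rho)=-\rho^2$ and $(c\rho+f)\beta'(\rho)=2c\rho^2$: for every $\varphi\in C^1_c(\overline\Omega\times[0,T))$ with $\varphi\ge0$,
\[
\int_0^T\int_\Omega\rho^2(\partial_t\varphi+u\cdot\nabla\varphi)\,dx\,dt+\int_0^T\int_\Omega\rho^2(2c-\div u)\,\varphi\,dx\,dt=\langle\mathcal B_{(\cdot)^2}[u,\rho],\varphi\rangle\ge0,
\]
the inequality holding because $\rho^2\ge0$, $\varphi\ge0$ and $(u_t\cdot\nabla d_{\partial\Omega})_-\ge0$. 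Taking $\varphi=\psi(t)$ with $\psi\in C^1_c([0,T))$, $\psi\ge0$, and setting $m(t):=\int_\Omega\rho(\cdot,t)^2\,dx\in L^\infty(0,T)$ and $a(t):=\norm{(\div u_t-2c_t)_-}_{L^\infty(\Omega)}\in L^1(0,T)$, the bound $\int_\Omega\rho^2(2c-\div u)\,dx\le a(t)m(t)$ gives $\int_0^T\psi'm\,dt+\int_0^T\psi\,a\,m\,dt\ge0$, i.e.\ $m'\le a\,m$ in $\mathcal D'(0,T)$. Since $m\ge0$, $m\in L^\infty$, and $m(t)\to0$ as $t\to0^+$ (time-continuity of $\rho^2$ together with $\rho_0\equiv0$ and boundedness of $\Omega$), Grönwall's lemma — e.g.\ observing that $t\mapsto m(t)\exp(-\int_0^ta)$ has non-positive distributional derivative, is nonnegative, and has null initial trace — forces $m\equiv0$, hence $\rho_1=\rho_2$.
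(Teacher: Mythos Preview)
Your overall architecture matches the paper's: interior renormalization, boundary analysis via the cutoff $\chi_r$, and Gr\"onwall for uniqueness with $\beta(s)=s^2$. Your Step~3 is essentially the paper's Step~0.

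There is a genuine gap in your Step~2. You correctly observe that $(\partial\Omega)_r^{\rm in}\setminus(\Gamma^+_t)_r^{\rm in}$ projects onto $\Gamma^-_t$, and that the leftover outside $(K_n)_r^{\rm in}$ has tube measure $\lesssim r\,\mathcal L^T_{\partial\Omega}(\Gamma^-\setminus K_n)$. But the phrase ``localized Gauss--Green identity on the portion of the tube projecting onto $K_n$'' is not an available tool: the Gauss--Green identity \eqref{distr_norm_trace} requires a Lipschitz or Sobolev test function, not the characteristic function of $(K_{n,t})_r^{\rm in}$. There is no direct route to identifying $\lim_r r^{-1}\int_{(K_{n,t})_r^{\rm in}}\varphi\beta(\rho)\,u_t\cdot\nabla d_{\partial\Omega}$ with a boundary integral over $K_{n,t}$, since $\beta(\rho)$ has no Lebesgue trace (so \cref{P:traces and tub neigh general} does not apply) and hard geometric cutoffs are not admissible multipliers in the divergence--measure framework.

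The paper fills this gap by splitting the \emph{test function} rather than the region of integration: it lifts $\mathbbm{1}_{\Gamma^+}\in L^\infty(\Lambda)$ via Gagliardo's theorem to $\lambda^+\in W^{1,1}\cap L^\infty(\Omega\times(0,T))$ with trace exactly $\mathbbm{1}_{\Gamma^+}$, sets $\lambda^-=1-\lambda^+$ and $\varphi^\pm=\varphi\lambda^\pm$. Since $\varphi^-\in W^{1,1}\cap L^\infty$, \cref{l: product rule} gives $\varphi^-\beta(\rho)U\in\mathcal{MD}^\infty$ with trace $\varphi\,\mathbbm{1}_{\Gamma^-}\tr_n(\beta(\rho)u)$ on $\Lambda$; testing this against $\chi_r$ yields directly $\lim_r\int\varphi^-\beta(\rho)\,u\cdot\nabla\chi_r=-\int_{\Gamma^-}\varphi\beta(g)\tr_n(u)$ by \cref{P:chain-rule}. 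The remaining $\varphi^+$ piece is then shown to equal $-\langle\mathcal B_\beta[u,\rho],\varphi\rangle$ using hypothesis~(ii), \cref{P: BV and exit}, and \cref{p: restriction of traces sobolev} applied to the Sobolev functions $|\varphi^\pm|$ (unlike $|\varphi|\mathbbm{1}_{\Gamma^\pm}$, these \emph{do} have traces). Your compact exhaustion of $\Gamma^-_t$ does appear in the paper, but only in this latter step, where $\beta(\rho)$ has already been absorbed into a uniform constant; it cannot by itself carry the main $\Gamma^-$ identity, for which the Sobolev localizer is essential.
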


\begin{proof}
To improve readability, the proof will be divided into steps.

\textsc{\underline{Step 0}:  Uniqueness.} We start by proving that the renormalization formula \eqref{ren_formula} implies uniqueness. By linearity it is enough to prove that $\rho\equiv 0$ whenever $\rho_0=g=f=0$.  Let $\alpha\in C^\infty_c([0,T))$ be arbitrary. By  choosing $\varphi(x,t)=\alpha(t)$ in \eqref{ren_formula} we obtain
\begin{align}
\int_0^T \alpha'\left(\int_\Omega \beta(\rho ) \,dx\right) dt + \int_0^T \alpha \left(\int_\Omega \left(\beta(\rho)- \rho \beta'(\rho)\right)\div u + c\rho \beta'(\rho) \,dx\right) dt= \langle  \mathcal{B}_\beta[u,\rho],\alpha\rangle.
\end{align}
Since the sequence of functions
$$
t\mapsto \frac{1}{r} \int_{(\partial \Omega)_{r}^{\rm in}} \beta(\rho) (u_t\cdot \nabla d_{\partial \Omega})_-  \,dx
$$
is bounded in $L^\infty((0,T))$, by weak* compactness we can find $b_\beta\in L^\infty((0,T))$ such that 
$$
\langle  \mathcal{B}_\beta[u,\rho],\alpha\rangle=\int_0^T \alpha b_\beta \,dt.
$$
In particular, by choosing $\beta(s)=s^2$, we deduce that the function $F(t):=\int_\Omega |\rho(x,t)|^2\,dx$ belongs to $W^{1,1}((0,T))$ with $F(0)=0$. Note that for any $\beta\geq 0$ it must hold $b_\beta\geq 0$. Thus, for $\alpha \geq 0$ we can split and bound 
\begin{align}
    \int_0^T \alpha' F\,dt &=\int_0^T \alpha\left( \int_\Omega |\rho|^2 ( \div u - 2c)\,dx \right) dt + \int_0^T\alpha b_\beta\,dt\\
    &\geq -\int_0^T \alpha\left( \int_\Omega |\rho|^2  (\div u-2c)_- \,dx\right)dt \geq -\int_0^T \alpha G F\,dt,
\end{align}
with $G(t):=\| (\div u-2c)_- (t)\|_{L^\infty(\Omega)} \in L^1((0,T))$.
For a.e. $t\in (0,T)$, we let $\alpha$ converge to the characteristic function of the time interval $[0,t]$ and deduce $F(t) \leq \int_0^t G(s)F(s)\,ds$,
from which $F\equiv 0$  follows by Gr\"onwall inequality.

We are left to prove the validity of \eqref{ren_formula}, whose proof will be divided into three more steps.

\textsc{\underline{Step 1}:  Interior renormalization.} We set 
$$
H := \left(\beta(\rho)- \rho \beta'(\rho)\right)\div u + (c\rho + f)\beta'(\rho) \in  L^1(\Omega\times(0,T)).
$$
By the renormalization property for $BV$ vector fields \cite{Ambr04}, we have  
\begin{equation}
    \int_0^T \int_\Omega   \left( \beta(\rho ) \Big( \partial_t \psi  + u\cdot \nabla \psi\Big) + H \psi \right)  dx \, dt  = - \int_\Omega\beta(\rho_0) \psi (x,0) \, dx \qquad     \forall \psi\in C^\infty_c(\Omega\times[0,T)). \label{eq: renormalized equation 1}
\end{equation}

By a standard density argument, \eqref{eq: renormalized equation 1} can be tested with Lipschitz functions vanishing on $\Lambda$. For any $r>0$ we set $\chi_r = \frac{1}{r}d_{\partial \Omega} \wedge 1$ and, given $\varphi\in C^1_c(\overline{\Omega}\times[0,T))$, we get 
\begin{align}
    \int_0^T \int_\Omega \chi_r \left( \beta(\rho ) \Big( \partial_t \varphi  + u\cdot \nabla \varphi\Big) + H \varphi \right)  dx \, dt &+ \int_0^T \int_\Omega \varphi \beta(\rho) u\cdot \nabla \chi_r \,dx\,dt \\
    &= - \int_\Omega \chi_r \beta(\rho_0) \varphi (x,0) \, dx. \label{eq: CE tested with chi_e}
\end{align}
By dominated convergence, we have that
\begin{align}
    \lim_{r \to 0}  \int_0^T \int_\Omega \chi_r \left( \beta(\rho ) \Big( \partial_t \varphi + u\cdot \nabla \varphi\Big) + H \varphi \right) dx\,dt &= \int_0^T \int_\Omega \left( \beta(\rho ) \Big( \partial_t \varphi  + u\cdot \nabla \varphi\Big) + H \varphi \right) \,dx\,dt  
    \\   \lim_{r\rightarrow 0} \int_\Omega \chi_r \beta(\rho_0) \varphi(x,0) \,dx &= \int_\Omega \beta(\rho_0) \varphi (x,0) \,dx. 
\end{align} 
We study the limit of the term involving $\nabla \chi_r$. Since $u$ behaves differently around $\Gamma^+$ and $\Gamma^-$, we split the integral as follows. Since $\mathbbm{1}_{\Gamma^+}\in L^\infty(\Lambda)$, by Gagliardo's theorem and a truncation argument, we find $\lambda^+\in W^{1,1}(\Omega\times(0,T))$ such that $0\leq \lambda^+\leq 1$ and its trace on $\Lambda$ satisfies $(\lambda^+)^\Lambda=\mathbbm{1}_{\Gamma^+}$. Then, set $\lambda^- := 1- \lambda^+$, that has the same properties relatively to $\Gamma^-$. We also define $\varphi^\pm := \varphi \ \lambda^\pm$. We claim that 
\begin{equation}
    \lim_{r\to 0} \int_0^T \int_\Omega \varphi^- \beta(\rho) u\cdot \nabla \chi_r \,dx\,dt = -\int_{\Gamma^-} \varphi\beta(g) \tr_n(u) \,d\mathcal L^T_{\partial \Omega}   \label{eq: behaviour on Gamma-}
\end{equation}
and
\begin{equation}
-\lim_{r \to 0}\int_0^T \int_\Omega \varphi^+ \beta(\rho) u\cdot \nabla \chi_r \,dx\,dt =\lim_{r \to 0}\frac{1}{r} \int_0^T \int_{(\partial \Omega)_r^{\rm in}} \varphi \beta(\rho) (u \cdot \nabla d_{\partial \Omega})_- \, dx \, dt=:\langle  \mathcal{B}_\beta[u,\rho],\varphi\rangle. \label{eq: behaviour on Gamma+}
\end{equation}
Note that the limit \eqref{eq: behaviour on Gamma+} is the only one which is not known to exist a priori. However, the fact that all the other terms in \eqref{eq: CE tested with chi_e} have a finite limit, shows that also the limit in \eqref{eq: behaviour on Gamma+} is finite and thus it defines the linear operator $\mathcal{B}_\beta [u,\rho]$ acting on smooth test functions $\varphi$.

\textsc{\underline{Step 2}: Behaviour on $\Gamma^-$. } 
We now check \eqref{eq: behaviour on Gamma-}. Let $V:= \varphi^-\beta(\rho) U$. Since $\varphi^- \in W^{1,1} \cap L^\infty(\Omega \times (0,T))$ and $\beta(\rho) U \in \mathcal{MD}^{\infty}(\Omega \times (0,T))$, by \cref{l: product rule}, we infer that  $V \in \mathcal{MD}^{\infty}(\Omega\times(0,T))$. Since $\chi_r$ does not depend on time and $\chi_r\big|_{\partial \Omega}\equiv 0$, we have\footnote{Note that, since $\varphi^-$ is a Sobolev function, \eqref{eq: renormalized equation} implies $\Div V\in L^1(\Omega\times (0,T))$.} 
\begin{align}
    \int_{\Omega\times \left(\{0\}\cup  \{T\}\right)}\chi_r \tr_n(V;\partial (\Omega\times (0,T)))\,dx & = \int_{\partial (\Omega\times (0,T))} \chi_r \tr_n(V;\partial (\Omega\times (0,T))) \,d\mathcal H^d \\
    &=\int_0^T \int_\Omega \varphi^-\beta(\rho) u\cdot\nabla \chi_r \,dx\,dt+\int_0^T\int_\Omega \chi_r \Div V\,dx\,dt.
\end{align}
Letting $r\rightarrow 0$ we get 
\begin{align}
    \int_{\Omega\times \left(\{0\}\cup  \{T\}\right)} &\tr_n(V;\partial (\Omega\times (0,T)))\,dx =\lim_{r\rightarrow 0} \int_0^T \int_\Omega \varphi^-\beta(\rho) u\cdot\nabla \chi_r \,dx\,dt+\int_0^T\int_\Omega  \Div V\,dx\,dt\\
    &=\lim_{r\rightarrow 0} \int_0^T \int_\Omega \varphi^-\beta(\rho) u\cdot\nabla \chi_r \,dx\,dt  +\int_{\partial (\Omega\times (0,T))}  \tr_n(V;\partial (\Omega\times (0,T))) \,d\mathcal H^d.
\end{align}
In the above formula the boundary integrals on $\Omega\times\left(\{0\}\cup \{T\}\right)$ cancel. Thus, since $\Lambda=\partial \Omega\times (0,T)$, the above identity is equivalent to
\begin{align} 
    \lim_{r\to 0} \int_0^T \int_\Omega \varphi^- \beta(\rho) u\cdot \nabla \chi_r \,dx\,dt &=- \int_\Lambda \tr_n(\varphi^- \beta(\rho) U; \partial(\Omega\times(0,T)))\big|_{\Lambda} \,d\mathcal L^T_{\partial \Omega}.\label{eq: limit on Gamma-}
\end{align}
Moreover, by \eqref{eq: product rule trace} and the chain rule for traces by \cref{P:chain-rule} (recall that $\rho = g$ on $\Gamma^-$ in the sense of \cref{D:Cont Eq weak sol}), on $\Gamma^-$ we have that
\begin{align}
    \tr_n(\varphi^- \beta(\rho) U; \partial(\Omega\times(0,T))) &= (\varphi^-)^{\Omega\times(0,T)}\tr_n(\beta(\rho) U; \partial(\Omega\times(0,T))) \\
    &= \varphi\mathbbm{1}_{\Gamma^-} \tr_n(\beta(\rho) u)= \varphi\mathbbm{1}_{\Gamma^-} \beta(g) \tr_n(u),
\end{align}
where in the second equality we have used \eqref{eq: tr beta(rho) u}. In the above identity $(\varphi^-)^{\Omega\times(0,T)}$ denotes the strong  trace of $\varphi^-$ on $\partial (\Omega\times(0,T))$ in the sense of Sobolev functions (see \cref{T:trace_in_BV}).
This proves \eqref{eq: behaviour on Gamma-}. 

\textsc{\underline{Step 3}: Behaviour on $\Gamma^+$. } We check \eqref{eq: behaviour on Gamma+}. For a.e. $t \in (0,T)$ we estimate 
\begin{align}
    \frac{1}{r} & \abs{ \int_{(\partial \Omega)_r^{\rm  in} } \varphi^+ \beta(\rho) (u\cdot \nabla d_{\partial \Omega} ) \,dx   +  \int_{ (\partial \Omega)_r^{\rm  in} } \varphi \beta(\rho) (u\cdot \nabla d_{\partial \Omega})_- \,dx } 
    \\ & \leq \frac{1}{r} \int_{(\partial \Omega)_r^{\rm in}} \abs{\varphi^+ \beta(\rho)} (u \cdot \nabla d_{\partial \Omega})_+  \, dx + \frac{1}{r} \int_{(\partial \Omega)_r^{\rm \rm in}} \abs{\varphi^- \beta(\rho) } (u \cdot \nabla d_{\partial \Omega})_- \, dx.  
\end{align}
To estimate the first term we split
\begin{align}
    \frac{1}{r} \int_{(\partial\Omega)_r^{\rm in}}\abs{\varphi^+ \beta(\rho)} (u\cdot \nabla d_{\partial \Omega})_+ \,dx  \leq C \left(\frac{1}{r}  \int_{(\Gamma^+_t)_r^{ \rm in}} (u\cdot \nabla d_{\partial \Omega})_+ \,dx
  + \frac{1}{r} \int_{(\partial\Omega)_r^{\rm in}\setminus (\Gamma^+_t)_r^{\rm  in}} \abs{\varphi^+} \,dx\right). 
\end{align}
The first term goes to $0$ as $r \to 0$ for a.e. $t \in (0,T)$ by \eqref{hp: gamma out exit section}. To estimate the second term, by the slicing properties for Sobolev functions (see \cref{p: slicing and traces}), it follows that $\abs{\varphi^+(\cdot, t)} \in W^{1,1}(\Omega)$ for a.e. $t \in (0,T)$ and $\abs{\varphi^+(\cdot, t)}^{\Omega} = \abs{\varphi(\cdot, t)}\mathbbm{1}_{\Gamma^+_t}$ as functions in $L^\infty(\partial \Omega)$. Hence, for a.e. $t \in (0,T)$, recalling that $\Gamma_t^+$ is closed, by \cref{p: restriction of traces sobolev} we deduce
\begin{align}
    \lim_{r\to  0} \frac{1}{r}\int_{(\partial\Omega)_r^{\rm in}\setminus (\Gamma^+_t)_r^{\rm in}} \abs{\varphi^+} \, dx & = \lim_{r\to  0} \frac{1}{r}\int_{(\partial\Omega)_r^{\rm in}} \abs{\varphi^+} \,dx - \lim_{r\to  0} \frac{1}{r}\int_{(\Gamma^+_t)_r^{\rm  in}} \abs{\varphi^+} \,dx 
    \\ & = \int_{\partial\Omega} \abs{\varphi^+(\cdot, t)}^\Omega \,d\H^{d-1} - \int_{\Gamma^+_t} \abs{\varphi^+(\cdot, t)}^\Omega \,d\H^{d-1} 
    \\ & = \int_{\Gamma^-_t} \abs{\varphi(\cdot, t) } \mathbbm{1}_{\Gamma^+_t} \,d\H^{d-1} = 0.
\end{align}
For the term involving $(u \cdot \nabla d_{\partial \Omega})_-$, given $\delta>0$, we choose $K\subset \Gamma^-_t$ compact and $A\subset \partial \Omega$ open such that $K\subset A \subset\subset \Gamma^-_t$ and $\H^{d-1}(\Gamma^-_t\setminus K)\leq\delta$. Then  we estimate 
\begin{align}
    \frac{1}{r} \int_{(\partial \Omega)_r^{\rm in}} \abs{\varphi^- \beta(\rho)} (u \cdot \nabla d_{\partial \Omega})_- \, dx  
    \leq C\left( \frac{1}{r} \int_{(\partial\Omega \setminus A)_r^{\rm in}} \abs{\varphi^-} \, dx   +  \frac{1}{r} \int_{(\overline{A})_r^{\rm in} } (u \cdot \nabla d_{\partial \Omega})_- \, dx\right).
\end{align}
Since $\overline{A}\subset\Gamma^-_t$ is closed, the second term vanishes in the limit  by \cref{P: BV and exit}. Since also $\partial\Omega\setminus A$ is closed in $\partial\Omega$, using \cref{p: slicing and traces} and \cref{p: restriction of traces sobolev} as before, it is readily checked that
\[
    \lim_{r\to 0} \frac{1}{r} \int_{(\partial\Omega \setminus A)_r^{\rm in}} \abs{\varphi^-} \, dx =  \int_{\partial\Omega \setminus A}\abs{\varphi} \mathbbm{1}_{\Gamma^-_t} \, d\H^{d-1}
    \leq \norm{\varphi}_{L^\infty(\partial\Omega)} \H^{d-1}(\Gamma^-_t\setminus K) \leq C \delta.
\]
The arbitrariness of $\delta>0$ implies that the above limit vanishes. To summarize, we have proved
\begin{equation}
    \lim_{r \to 0} \frac{1}{r} \abs{ \int_{(\partial \Omega)_r^{\rm in} } \varphi^+ \beta(\rho) (u\cdot \nabla d_{\partial \Omega} ) \,dx  + \int_{ (\partial \Omega)_r^{\rm in} } \varphi \beta(\rho) (u\cdot \nabla d_{\partial \Omega})_- \,dx } = 0 \quad \text{ for a.e. } t \in (0,T). 
\end{equation}
Moreover, by \cref{p: minkowski federer}, we find $r_0>0$ such that for all $r< r_0$ we have
\begin{align}
    \frac{1}{r}   \int_{(\partial \Omega)_r^{\rm in} } \abs{\varphi^+ \beta(\rho) (u\cdot \nabla d_{\partial \Omega} )  +  \varphi \beta(\rho) (u\cdot \nabla d_{\partial \Omega})_-} \,dx 
     \leq C\frac{\mathcal{H}^d((\partial \Omega)_r)}{r}  \leq 2 C  \mathcal{H}^{d-1}(\partial \Omega)
\end{align}
for a.e. $t \in (0,T)$. Then, \eqref{eq: behaviour on Gamma+} follows by dominated convergence.
\end{proof}

A direct consequence of \cref{T:CE particular} is the following result, in the specific setting of a transport equation with a divergence-free vector field which is tangent (in the Lebesgue sense) to the boundary. It should be compared with \cite{DRINV23}*{Theorem 1.3}.
\begin{corollary}\label{C: TE tangent}
   Let $\Omega\subset \R^d$ be a bounded open set with Lipschitz boundary and $u \in L^\infty(\Omega\times (0,T))\cap L^1_{\rm loc}([0,T);BV_{\rm loc}(\Omega))$ be a given vector field such that $\div u = 0$ and $u_n^{\partial \Omega} (\cdot,t) \equiv 0$\footnote{Note that this implies $\Gamma^-=\emptyset$.} for a.e. $t\in (0,T)$.
Then, for any $\rho_0\in L^\infty(\Omega)$, there exists a unique distributional solution $\rho \in L^\infty(\Omega\times (0,T))$ to 
\begin{equation}\label{Transp Eq}
    \left\{
    \begin{array}{rcll}
        \partial_t \rho +u  \cdot \nabla \rho &=& 0 & \text{ in }\Omega\times (0,T) \\[1ex]
        \rho(\cdot,0) &=&  \rho_0 & \text{ in }  \Omega,
    \end{array}
    \right.
\end{equation}
in the sense of \cref{D:Cont Eq weak sol}. Moreover, for any  $\beta\in C^1(\R)$, it holds
$$
\int_0^T \int_\Omega \beta(\rho ) \Big( \partial_t \varphi  + u\cdot \nabla \varphi\Big) \,dx \, dt = - \int_\Omega \beta(\rho_0) \varphi(x,0) \,dx   \qquad \forall \varphi\in C^1_c(\overline \Omega\times [0,T)),
$$
and thus also
$$
\int_\Omega \beta(\rho(x,t))\,dx= \int_\Omega \beta(\rho_0(x))\,dx \qquad \text{for a.e. } t\in (0,T).
$$
\end{corollary}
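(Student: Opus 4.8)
The plan is to deduce both the renormalization identity and the conservation law directly from \cref{T:CE particular}, the point being that the hypothesis $u^{\partial\Omega}_n(\cdot,t)\equiv 0$ forces every boundary contribution appearing in \eqref{ren_formula} to vanish.

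First I would check that \cref{T:CE particular} applies with $c\equiv f\equiv 0$. Since $\div u=0$, for a.e.\ $t\in(0,T)$ one has $u(\cdot,t)\in\mathcal{MD}^\infty(\Omega)$, so by \cref{T:strong trace equals weak} the distributional normal trace coincides with the Lebesgue one and therefore $\tr_n(u(\cdot,t);\partial\Omega)=u^{\partial\Omega}_n(\cdot,t)\equiv 0$. Hence $\widetilde{\Gamma}^+=\Lambda$ up to $\H^d$-negligible sets, so $\Gamma^+=\Lambda$ and $\Gamma^-=\emptyset$ (this justifies the footnote); in particular assumption $(i)$ is vacuous (take $O=\emptyset$). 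For assumption $(ii)$ I would use that $u^{\partial\Omega}_{-n}(\cdot,t)=-u^{\partial\Omega}_n(\cdot,t)\equiv 0$, so by \cref{P:NLT pos and neg} both $(u_t\cdot\nabla d_{\partial\Omega})_+$ and $(u_t\cdot\nabla d_{\partial\Omega})_-$ have vanishing normal Lebesgue trace on $\partial\Omega$; applying \cref{C:NLT pos and neg} with $\tilde\Sigma=\Sigma=\partial\Omega$ then gives, for a.e.\ $t$,
\[
\lim_{r\to 0}\frac1r\int_{(\partial\Omega)_r^{\rm in}}(u_t\cdot\nabla d_{\partial\Omega})_\pm\,dx=0,
\]
and the $+$ statement is exactly \eqref{hp: gamma out exit section} since $\Gamma^+_t=\partial\Omega$. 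Finally $(\div u-2c)_-\equiv 0\in L^1((0,T);L^\infty(\Omega))$, so \cref{T:CE particular} yields uniqueness of the bounded distributional solution, while existence (here $\div u,c,f\in L^\infty$) is the classical statement of \cite{CDS14bis}.

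Next I would read off what \eqref{ren_formula} becomes. The second interior integral vanishes because $\div u=0$ and $c=0$, and the $\Gamma^-$ boundary integral vanishes because $\Gamma^-=\emptyset$, so it only remains to show $\langle\mathcal{B}_\beta[u,\rho],\varphi\rangle=0$. For a.e.\ $t$ I would bound $|\varphi\beta(\rho)|\le C$ and use the $-$ case of the displayed limit above to get $\frac1r\int_{(\partial\Omega)_r^{\rm in}}|\varphi\beta(\rho)|(u_t\cdot\nabla d_{\partial\Omega})_-\,dx\to 0$; since $\frac1r\H^d((\partial\Omega)_r^{\rm in})\le \frac1r\H^d((\partial\Omega)_r)$ is bounded for small $r$ by \cref{p: minkowski federer}, dominated convergence in $t$ gives $\langle\mathcal{B}_\beta[u,\rho],\varphi\rangle=0$. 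Thus \eqref{ren_formula} reduces precisely to $\int_0^T\int_\Omega\beta(\rho)(\partial_t\varphi+u\cdot\nabla\varphi)\,dx\,dt=-\int_\Omega\beta(\rho_0)\varphi(x,0)\,dx$ for every $\varphi\in C^1_c(\overline\Omega\times[0,T))$, which is the second assertion. For the last one I would specialize to $\varphi(x,t)=\alpha(t)$ with $\alpha\in C^\infty_c([0,T))$: since $\nabla\varphi=0$ this gives $\int_0^T\alpha'(t)F(t)\,dt=-\alpha(0)\int_\Omega\beta(\rho_0)\,dx$ with $F(t):=\int_\Omega\beta(\rho(x,t))\,dx\in L^\infty(0,T)$; testing first with $\alpha\in C^\infty_c((0,T))$ shows $F$ is a.e.\ constant, and testing then with some $\alpha$ with $\alpha(0)\ne 0$ identifies the constant as $\int_\Omega\beta(\rho_0)\,dx$.

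The argument is essentially bookkeeping on top of \cref{T:CE particular}; the only non-formal point is the vanishing of the boundary operator $\mathcal{B}_\beta[u,\rho]$, i.e.\ the fact that Lebesgue-sense tangency really kills the $o(r)$-boundary contribution, which is where \cref{P:NLT pos and neg}, \cref{C:NLT pos and neg} and \cref{p: minkowski federer} enter. I do not expect any genuine obstacle beyond this.
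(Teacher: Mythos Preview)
Your proposal is correct and follows exactly the route the paper intends: the paper merely states that the corollary is ``a direct consequence of \cref{T:CE particular}'' without further details, and you have filled in precisely the verifications needed (that $\Gamma^-=\emptyset$ via \cref{T:strong trace equals weak}, that \eqref{hp: gamma out exit section} holds via \cref{C:NLT pos and neg}, and that the boundary operator $\mathcal{B}_\beta[u,\rho]$ vanishes via the $-$ case of the same corollary together with dominated convergence). No gaps.
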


\subsection{A counterexample to uniqueness with normal Lebesgue trace}\label{S:counter ex uniq}
In this section we prove \cref{P:counter-ex}. The proof relies on \cite{CDS14}*{Proposition 1.2}, which is a suitable modification of the celebrated construction by Depauw in \cite{De03}.

\begin{proof}[Proof of \cref{P:counter-ex}]
Let $b : \R^2 \times (0,1) \to \R^2 $ be the time-dependent vector field built in \cite{De03} satisfying the following properties:
\begin{itemize}
    \item $b \in L^\infty(\R^2 \times (0,1))$; 
    \item for every $t \in (0,1)$, $b(\cdot, t): \R^2 \to \R^2$ is piecewise smooth, divergence-free and $b(\cdot, t)\in BV_{\rm loc} (\R^2)$; 
    \item $b \in L^1_{\rm loc} ((0,1); BV_{\rm loc}(\R^2))$, but $b \notin L^1([0,1); BV_{\rm loc}(\R^2))$, namely the $BV$ regularity blows up as $t \to 0^+$ in a non-integrable way;
    \item the Cauchy problem \eqref{CE counter ex} with initial datum $\rho_0=0$ admits a nontrivial bounded solution. 
\end{itemize}
Following \cite{CDS14}*{Proposition 1.2}, we adopt the notation $(y,r) \in  \R^2\times (0,+\infty) =:\Omega$ and define the autonomous vector field $u: \Omega \to \R^3$ as
\begin{equation}
    u(y,r) = \begin{cases}
        (b(y,r),1) & r \in (0,1), 
        \\ (0,1) & r \geq 1. 
    \end{cases} 
\end{equation}
In other words, to construct $u$ we are lifting from $2$-d to $3$-d the Depauw vector field, turning the time variable into a third space variable.
By \cite{CDS14}*{Proposition 1.2}, $u$ satisfies the following properties:
\begin{itemize}
    \item $u \in L^\infty(\Omega)\cap BV_{\rm loc}(\Omega)$ and $u$ is divergence-free; 
    \item the initial-boundary value problem \eqref{CE counter ex} admits infinitely many different solutions in the sense of \cref{D:Cont Eq weak sol}. 
\end{itemize}
Since $d_{\partial \Omega}(y,r) = r$, it is clear that $u_n^{\partial \Omega}(y,0) = -1$ for all $y \in \R^2$. Moreover, by \cref{T:strong trace equals weak}, it must hold $ \tr_n(u; \partial \Omega) \equiv u_n^{\partial \Omega}= -1$.
\end{proof}

\begin{appendix}

\section{Measurability of the space-time trace} 

In the following lemma we study the existence of the full trace of a time-dependent vector field with spatial $BV$ regularity on a portion of the boundary of a space-time cylinder. For the reader's convenience we give a complete the proof, which is based on that of \cite{ACM05}*{Proposition 3.2}. 

\begin{lemma} \label{l: measurability of space-time trace}
Let $u$ be a vector field as in \cref{T:CE general}. Then, there exists a function $\tr(u)\in L^\infty(\Lambda\cap O)$ such that $\tr(u) (\cdot,t) = u_t^\Omega$ for a.e. $t \in (0,T)$ as functions in $L^\infty(O_t \cap \partial \Omega)$. 
\end{lemma}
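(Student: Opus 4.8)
The plan is to follow the strategy of \cite{ACM05}*{Proposition 3.2}: realize the boundary trace of $u_t$ pointwise as a limit of interior averages over small balls, observe that these averages are jointly measurable in $(x,t)$, and then use \cref{T:trace_in_BV} slice-by-slice to identify the limit with $u_t^\Omega$.

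First I would make sense of $u_t^\Omega$ on $\partial\Omega\cap O_t$. Since $O$ is open, $O_t=\{x:(x,t)\in O\}$ is open for every $t$, and for a.e. $t\in(0,T)$ we have $u_t\in L^\infty(\Omega)\cap BV_{\rm loc}(O_t)$; call these the \emph{good} times, a set of full measure. For a good $t$, cover $\partial\Omega\cap O_t$ by countably many balls $B_j$ with $\overline{B_j}\subset O_t$: on each $\Omega\cap B_j$ the function $u_t$ is $BV$ and $\Omega\cap B_j$ has Lipschitz boundary, so \cref{T:trace_in_BV} produces a trace on $\partial\Omega\cap B_j$; these agree on overlaps, yielding a well-defined $u_t^\Omega\in L^\infty(\partial\Omega\cap O_t;\mathcal H^{d-1})$ which is characterized at $\mathcal H^{d-1}$-a.e.\ point by the Lebesgue-point formula of \cref{T:trace_in_BV}.

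Next I would build the candidate. Fix a sequence $r_k\downarrow 0$ and, for $(x,t)\in\Lambda\cap O$, set
\[
a_k(x,t):=\frac{1}{\mathcal H^d(B_{r_k}(x)\cap\Omega)}\int_{B_{r_k}(x)\cap\Omega}u(y,t)\,dy ,
\]
which is well defined since $\mathcal H^d(B_{r_k}(x)\cap\Omega)>0$ for $x\in\partial\Omega$. For fixed $t$, the maps $x\mapsto\int_\Omega\mathbbm 1_{B_{r_k}(x)}(y)\,u(y,t)\,dy$ and $x\mapsto\mathcal H^d(B_{r_k}(x)\cap\Omega)$ are continuous on $\partial\Omega$, because $\mathcal H^d\big(B_{r_k}(x)\triangle B_{r_k}(x')\big)\to0$ as $x'\to x$ and $u_t\in L^\infty$; hence $x\mapsto a_k(x,t)$ is continuous on $\partial\Omega$. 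For fixed $x$, $t\mapsto a_k(x,t)$ is $\mathcal L^1$-measurable by Fubini. Thus each $a_k$ is a Carathéodory function on (the relevant subset of) $\partial\Omega\times(0,T)$, hence jointly measurable with respect to $\mathcal L^T_{\partial\Omega}$; so are $\liminf_k a_k$ and $\limsup_k a_k$, the set $E\subset\Lambda\cap O$ on which they coincide and are finite is measurable, and
\[
\tr(u):=\begin{cases}\displaystyle\lim_{k\to\infty}a_k & \text{on }E,\\[1ex] 0 & \text{on }(\Lambda\cap O)\setminus E,\end{cases}
\]
is measurable. The bound $|a_k(x,t)|\le\|u\|_{L^\infty(\Omega\times(0,T))}$ gives $\tr(u)\in L^\infty(\Lambda\cap O)$. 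Finally, for a good time $t$ and $\mathcal H^{d-1}$-a.e.\ $x\in\partial\Omega\cap O_t$, \cref{T:trace_in_BV} gives $r^{-d}\int_{B_r(x)\cap\Omega}|u(y,t)-u_t^\Omega(x)|\,dy\to0$, while (since $\partial\Omega$ is locally a Lipschitz graph, so at $\mathcal H^{d-1}$-a.e.\ point there is a tangent half-space, cf.\ \cref{l: lipschitz set vs boundary cone property}) $\mathcal H^d(B_r(x)\cap\Omega)/r^d\to\omega_d/2>0$; dividing, $a_k(x,t)\to u_t^\Omega(x)$, so $(x,t)\in E$ and $\tr(u)(x,t)=u_t^\Omega(x)$ for $\mathcal H^{d-1}$-a.e.\ $x\in\partial\Omega\cap O_t$. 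As the good times are of full measure, this yields $\tr(u)(\cdot,t)=u_t^\Omega$ in $L^\infty(O_t\cap\partial\Omega)$ for a.e.\ $t$.

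The hard part is the measurability bookkeeping: upgrading the $t$-by-$t$, $\mathcal H^{d-1}$-a.e.\ statements into a single jointly measurable representative, which hinges on the Carathéodory property of the averages $a_k$ (continuity in $x$, measurability in $t$); the $BV$ content itself is entirely outsourced to \cref{T:trace_in_BV}, and the only other mildly technical point is the standard fact that a Lipschitz domain has a positive, a.e.\ constant one-sided density at its boundary, allowing the $r^{-d}$-normalized integrals of \cref{T:trace_in_BV} to be rewritten as genuine averages.
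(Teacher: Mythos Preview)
Your proof is correct but proceeds along a genuinely different route from the paper's, and also not the one you announce: despite the opening reference to \cite{ACM05}*{Proposition 3.2}, you do \emph{not} follow that strategy. The paper (and \cite{ACM05}) defines, for each component $u^i$, a \emph{distributional} trace on $\Lambda\cap O$ via the Gauss--Green pairing $\langle\tr(u^i),\varphi\rangle=\int_{O_{\rm in}}u^i\div\varphi+\int_{O_{\rm in}}\varphi\cdot d(\nabla u^i_t\otimes dt)$, then uses the cutoff family $\varphi_\e$ from \cite{ACM05}*{Lemma 3.1} and the assumption $\nabla u_t\otimes dt\in\mathcal M_{\rm loc}(O)$ to bound this by $\|u^i\|_{L^\infty}\int_{\Lambda\cap O}|\varphi\cdot n_\Omega|$, whence Riesz representation yields $g^i\in L^\infty(\Lambda\cap O)$; a Fubini argument then identifies $g^i(\cdot,t)$ with the slice trace. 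Your argument bypasses the distributional machinery entirely: you build the candidate directly from ball averages $a_k$, get joint measurability from the Carath\'eodory property (continuity in $x$, measurability in $t$), and identify the limit with $u_t^\Omega$ via the Lebesgue-point characterization in \cref{T:trace_in_BV}. This is more elementary and in fact uses slightly less: you never invoke $\nabla u_t\otimes dt\in\mathcal M_{\rm loc}(O)$, only the slice-wise $u_t\in BV_{\rm loc}(O_t)$. The paper's route, on the other hand, sits more naturally inside the $\mathcal{MD}^\infty$ framework used throughout and makes the normal-trace structure explicit. One small technical point: $\Omega\cap B_j$ need not have Lipschitz boundary for arbitrary balls $B_j$; you should instead use the local graph charts of $\partial\Omega$ (intersecting with coordinate cylinders), which do produce Lipschitz subdomains. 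This is a routine fix and does not affect the argument.
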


\begin{proof}
For any component $u^i$, we define its distributional trace on $\Lambda \cap O$ by setting 
\begin{equation}
    \langle \tr(u^i),\varphi \rangle := \int_{O_{\rm in}} u^i \div\varphi \,dx\,dt + \int_{O_{\rm in}} \varphi \cdot \,d \nabla u_t^i \otimes dt \qquad  \forall \varphi\in C^1_c(O;\R^d),
\end{equation}
where $O_{\rm in}=O\cap(\Omega\times(0,T))$. We claim that there exists $g^i \in L^\infty(\Lambda \cap O)$ such that 
\begin{equation}
     \langle \tr(u^i),\varphi \rangle = \int_{\Lambda\cap O} g^i \ n_\Omega \cdot \varphi \, d\mathcal L^T_{\partial \Omega} \qquad \forall \varphi \in C^1_c(O; \R^d). \label{eq: trace of u^i}
\end{equation}
Indeed, for any $\varphi\in C^1_c(O;\R^d)$, following \cite{ACM05}*{Lemma 3.1}, we find $\varphi_\eps\in C^1_c(O;\R^d)$ such that
\begin{itemize}
    \item $\varphi_\eps =\varphi$ on $(\Lambda)_\eps \cap O_{\rm in}$ and $\varphi_\eps\equiv 0$ on $O_{\rm in}\setminus (\Lambda)_{2\eps}$,
    \item $\norm{\varphi_\eps}_{L^\infty(O)} \leq \norm{\varphi}_{L^\infty(O)}$,
    \item $\int_{O_{\rm in}} \abs{\div \varphi_\eps} \, dx\,dt \leq \int_{\Lambda\cap O} \abs{\varphi \cdot n_\Omega} \, d\mathcal L^T_{\partial \Omega} + \eps$.
\end{itemize}
It is immediate to see that the distribution $\tr(u^i)$ is supported on $\Lambda\cap O$. Hence, noticing that $\spt(\varphi_\e) \subset \spt(\varphi)$, we write
\begin{align}
    \abs{\langle \tr(u^i),\varphi \rangle} &= \abs{\langle \tr(u^i),\varphi_\eps \rangle}  \leq \abs{\int_{O_{\rm in}} u^i \div \varphi_\eps \,dx\,dt} + \abs{\int_{(\Lambda)_{2\eps}  \cap O_{\rm in}} \varphi_\eps \cdot \,d\nabla u_t^i \otimes dt}  \\
    &\leq \norm{u^i}_{L^\infty(\R^d)} \int_{O_{\rm in}} |\div\varphi_\eps| \, dx\,dt + \norm{\varphi_\eps}_{L^\infty(O)} \abs{\nabla u_t^i}\otimes dt \, \left( (\Lambda)_{2\eps} \cap O_{\rm in} \cap \spt(\varphi) \right)  \\
    &\leq \norm{u^i}_{L^{\infty}(\R^d)} \left(\int_{\Lambda\cap O} |\varphi\cdot n_\Omega| \, d\H^d + \eps \right)  + \norm{\varphi}_{L^\infty(O)} \abs{\nabla u_t^i}\otimes dt \, \left( (\Lambda)_{2\eps}  \cap O_{\rm in} \cap \spt(\varphi) \right).
\end{align}
Since $\bigcap_{\e>0} (\Lambda)_\eps \cap O_{\rm in} = \emptyset$ and $\abs{\nabla u_t^i}\otimes dt \in \mathcal{M}_{\rm loc}(O)$, letting $\e \to 0$, we obtain 
\[
   \abs{\langle \tr(u^i),\varphi \rangle} \leq \norm{u^i}_{L^\infty(\R^d)} \int_{\Lambda\cap O} |\varphi\cdot n_\Omega| \, d\mathcal L^T_{\partial \Omega}  \qquad \forall \varphi \in C^1_c(O;\R^d).
\]
Hence, there exists $T^i\in L^\infty(\Lambda\cap O;\R^d)$ such that 
    \[
        \langle \tr(u^i),\varphi \rangle = \int_{\Lambda\cap O} T^i\cdot \varphi \, d\mathcal L^T_{\partial \Omega}  \qquad \forall \varphi \in C^1_c(O; \R^d).
    \]
    Moreover, it holds
    \[
        \abs{\int_{\Lambda\cap O} T^i\cdot \varphi \, d\mathcal L^T_{\partial \Omega}} \leq \norm{u^i}_{L^\infty(\R^d)} \int_{\Lambda\cap O} |\varphi\cdot n_\Omega| \, d\mathcal L^T_{\partial \Omega} \qquad \forall \varphi \in L^1(\Lambda\cap O; \R^d).
    \]
Hence, we find $g^i \in L^\infty(\Lambda \cap O)$ such that $T^i= g^i n_{\Omega}$, thus proving \eqref{eq: trace of u^i}. We define $\tr(u)\in L^\infty(\Lambda\cap O;\R^d)$ such that $(\tr(u))^i=\tr(u^i)$ for all $i=1,\dots,d$. 

To conclude, we check that $\tr(u)(\cdot, t)$ agrees with the $BV$ trace of $u(\cdot, t)$ for a.e. $t$ as $L^\infty$ functions on $O_t \cap \partial \Omega$. Indeed, letting $\varphi\in C^1_c(O;\R^d)$, by Fubini's theorem we compute
    \begin{align}
        \int_{\Lambda\cap O} \tr(u^i) \varphi \cdot n_\Omega \,dx\,dt & = \int_{O_{\rm in}} u^i \div\varphi \,dx\,dt + \int_{O_{\rm in}} \varphi \cdot \,d \nabla u_t^i\otimes dt  \\
        & = \int_0^T \left( \int_{O_t\cap\Omega} u_t^i \div\varphi \,dx + \int_{O_t\cap\Omega} \varphi \cdot \,d\nabla u_t^i\right) dt  \\
        &= \int_0^T \left( \int_{O_t\cap\partial\Omega} (u_t^i)^{O_t\cap\Omega} \varphi \cdot n_\Omega\,d\H^{d-1} \right) dt. 
    \end{align}
    Thus, by a standard density argument, we have that 
    \[
        \int_0^T \left( \int_{O_t\cap\partial\Omega} \left(\tr(u^i)(\cdot,t) -(u^i_t)^{O_t\cap\Omega}\right)\ \varphi \cdot n_\Omega \,d\H^{d-1} \right) dt=0 \qquad  \forall \varphi\in L^1(\Lambda\cap O;\R^d). 
    \]
The conclusion follows since $\varphi\cdot n_\Omega$ can be chosen to be any scalar $L^1$ function. 
\end{proof}

\end{appendix}

\bibliographystyle{plain} 
\bibliography{biblio}

\end{document}